\documentclass[english,letterpaper,11pt,reqno]{amsart}

\setlength{\parindent}{0pt}  
\emergencystretch20pt   
\usepackage{appendix}
\usepackage{amsbsy}
\usepackage{amsfonts}
\usepackage{amsmath}
\usepackage{amssymb}
\usepackage{amsthm}
\usepackage{graphicx}
\usepackage{ifthen}
\usepackage{textcomp}
\usepackage{enumitem, color, amssymb}
\usepackage{mathrsfs}
\usepackage[bookmarksnumbered,colorlinks]{hyperref}
\hypersetup{colorlinks=true, linkcolor=blue, citecolor=blue}
\usepackage{placeins}

\addtolength{\oddsidemargin}{-.10in}
\addtolength{\evensidemargin}{-.10in}
\addtolength{\textwidth}{.2in}
\addtolength{\textheight}{.2in}


\newtheorem {lemma} {Lemma} [section]
\newtheorem{thm}{Theorem}
\newtheorem{prop}[lemma]{Proposition}
\newtheorem{defn}[lemma]{Definition}
\newtheorem{cor}[lemma]{Corollary}
\theoremstyle{remark}
\newtheorem{remark}[lemma]{Remark}

\newcommand{\beqa}{\begin{eqnarray}}
\newcommand{\beq}{\begin{equation}}
\newcommand{\eeqa}{\end{eqnarray}}
\newcommand{\eeq}{\end{equation}}

\newcommand{\be}{\begin{equation}}
\newcommand{\ee}{\end{equation}}
\newcommand{\lb}[1]{\label{#1}}

\renewcommand{\Ref}[1]{(\ref{#1})}

\newcommand\ip[2]{g({#1},{#2})}
\newcommand{\lra}{\longrightarrow}

\newcommand{\RR}{\mathbb{R}}
\newcommand{\SSt}{\mathbb{S}^3}
\newcommand{\vep}{\varepsilon}
\newcommand\kk{{\boldsymbol k}}
\newcommand\kt{\tilde{{\boldsymbol k}}}

\newcommand\vv[1]{{\boldsymbol {\it #1}}}
\newcommand\xx{\vv{x}}
\newcommand\yy{\vv{y}}

\newcommand\ZZ{\vv{z}}
\newcommand\cd[2]{\nabla_{\!#1}{#2}}

\newcommand\tg{\tilde{g}}
\newcommand\bg{\bar{g}}

\newcommand\n{\nabla}
\newcommand\gK{g_{\scriptscriptstyle K}}
\newcommand\nK{\n^{\scriptscriptstyle K}}
\newcommand\gS{g_{\scriptscriptstyle S\!K\!R}}
\newcommand\iiX{\iota^{\scriptscriptstyle X}}
\newcommand\iik[1]{\iota^{\scriptscriptstyle {#1}}}
\newcommand\comma{\hspace{.2in},\hspace{.2in}}

\newcommand{\HH}{\mathcal{H}}
\newcommand{\VV}{\mathcal{V}}
\newcommand{\ta}{\tau}
\newcommand\om{\omega}
\newcommand\tT{{\boldsymbol t}}
\newcommand{\wht}{\widehat}
\newcommand{\al}{\alpha}

\newcommand{\kr}{\bar{\kk}}
\newcommand{\we}{\wedge}
\newcommand{\bk}{\hat{\kk}}
\newcommand{\sig}{\sigma}
\newcommand{\del}{\delta}
\newcommand{\ep}{\varepsilon}
\newcommand{\fr}{\frac}
\newcommand{\tc}{\tau_c}
\newcommand{\ii}{\iota}
\newcommand{\J}{J_{g,\kk,\tT}}
\newcommand{\JJ}{J_{g,\kk_\pm}}


\begin{document}

\title[]{K\"ahler metrics via Lorentzian Geometry in dimension four}
\author[]{Amir Babak Aazami and Gideon Maschler}
\address{Department of Mathematics and Computer Science\\ Clark University\\ Worcester, MA }
\email{Aaazami@clarku.edu\,,\,Gmaschler@clarku.edu}
\begin{abstract}
Given a semi-Riemannian $4$-manifold $(M,g)$ with two distinguished vector fields
satisfying properties determined by their shear, twist and various Lie bracket
relations, a family of K\"ahler metrics $\gK$ is constructed, defined on an open
set in $M$, which coincides with $M$ in many typical examples. Under certain
conditions $g$ and $\gK$ share various properties, such as a Killing vector field
or a vector field with a geodesic flow. In some cases the K\"ahler metrics are complete.
The Ricci and scalar curvatures of $\gK$ are computed under certain assumptions in terms of data
associated to $g$. Many examples are described, including classical spacetimes in warped
products, for instance de Sitter spacetime, as well as gravitational plane waves, metrics of
Petrov type~$D$ such as Kerr and NUT metrics, and metrics for which $\gK$ is an SKR metric.
For the latter an inverse ansatz is described, constructing $g$ from the SKR metric.
\end{abstract}
\maketitle
\thispagestyle{empty}

\section{Introduction}
\label{sec:intro}

In one of his open problem collections, S. T. Yau concludes a problem with the question
\[
\begin{aligned}
\text{``}&\text{{\em Can one go from complete K\"ahler manifolds}}\\[-2pt]
  &\text{{\em back to physically interesting spacetimes?"}}
\end{aligned}
\]
(\cite[Problem 89]{yau}). This study makes a contribution mostly in the opposite direction,
by constructing K\"ahler metrics from Lorentzian $4$-manifolds equipped
with associated data.

\vspace{.08in}
To be sure, Yau's question is written in the context of the effort to make sense of the transformation between metrics known as Wick rotation. In contrast, the construction method
presented in this study, which applies in arbitrary signature, is different, and more involved,
than Wick rotation. But it is invariantly defined, and its two variants can be carried out
on a variety of classical spacetimes, such as de Sitter, Kerr and gravitational plane waves.
For one class of K\"ahler metrics, which includes the extremal metric conformal to the
Page metric, we give a kind of inverse construction: the K\"ahler metric is the input data for an ansatz producing Lorentzian $4$-manifolds, for which the construction method recovers the K\"ahler metric on an open dense set.

\vspace{.08in}
In some cases the construction yields complete K\"ahler metrics, or ones that extend to a larger compact manifold. There are also curvature-distinguished K\"ahler metrics, such as K\"ahler-Einstein ones, that arise from this construction. Some of those are described in the sequel~\cite{am2} to this work.

\vspace{.08in}
It is well-known that a Lorentzian metric is never compatible with any given almost
complex structure. Thus relating notions of Lorentzian and complex geometry is not a
straightforward process. Attempts to achieve this date back at least to the $1960$'s, and one of
its most well-known outcomes is the invention of twistor theory. Connections relating Lorentzian geometry specifically to K\"ahler geometry have also been made, some focusing on analogous structures in the two geometries, partly based on considerations from spin geometry \cite{gau,apo,trau1,nur-tra}. Our construction links the two geometries more directly, and is given in terms of standard differential geometric data, but is closely related to some of the papers just cited, and especially to \cite{nuro}. Flaherty's classical manuscript \cite{flaherty} also attempts such a direct link, but ultimately
follows a different path.

\vspace{.08in}
In more detail, given an oriented four-manifold with a semi-Riemannian metric $g$ and two
distinguished vector fields $\kk$, $\tT$ satisfying certain ``admissibility" conditions
(Definition~\ref{adms}), we construct an integrable almost complex structure and a family of exact
symplectic forms. Fixing one such form, it will be compatible with this complex structure and thus yield a K\"ahler metric $\gK$ on some open set (see Theorem~\ref{ad-gen}
and Proposition 
\ref{for-Kerr}). In favorable cases, which include
almost all of our examples, this open set coincides with the whole manifold.
Such a K\"ahler metric is not always complete, but we do give complete examples
(see subsection~\ref{complt}).

\vspace{.08in}
In the case $g$ is Lorentzian and $\kk$ is null with a geodesic flow, such symplectic forms
were defined by the first author in \cite{AA}, which constituted the original motivation for
this study.

\vspace{.08in}
Nondegeneracy of the symplectic form, and integrability of the almost complex structure
are related to invariants associated with $\kk$, namely its twist and shear, respectively,
first introduced in General Relativity \cite{sac}. We employ variants of these notions
defined relative to a direct sum decomposition of the tangent bundle.

\vspace{.08in}
The shear-free condition for a single vector field and its relation to integrability of
almost complex structures was studied in \cite{calped, cal}. But our shear condition is more
general, as we show in an example (see Section~\ref{sec:shrfl}). In the shear-free case,
our K\"ahler metrics are ambihermitian and at times ambiK\"ahler, in the sense of~\cite{acg}.

\vspace{.08in}
We actually give two theorems related to integrability. A Newman-Penrose version of the conditions of the second (Theorem~\ref{thm:KerrNUT}) is given in Formula (VIII.5) of \cite{flaherty},
suitable for the special case where $\kk$, $\tT$ are part of a null tetrad (a special type of frame).
In some of our examples the frame is not of this type (see also
Remark~\ref{nonttrd}).


\vspace{.08in}

\vspace{.08in}
Ricci curvature computations of some of these K\"ahler metrics appear mostly in \cite{am2},
but also in Section \ref{Ric-scal} under certain assumptions.



\vspace{.08in}
Lorentzian metrics inducing K\"ahler metrics which are SKR are given in Section~\ref{skr}.
The latter were first introduced in \cite{dm1}, in the context of
the classification of conformally-Einstein K\"ahler metrics (Section \ref{skr}).

\vspace{.08in}
In Section~\ref{warp} we show that a large class of Lorentzian warped products are admissible.
The same is shown in Section~\ref{sec:planewave} for gravitational plane waves.



\vspace{.08in}
For metrics of Petrov type~$D$, we give in Section \ref{Kerr} three examples in which the
theory is implemented and produces K\"ahler metrics: the Kerr metric, a class of NUT metrics
and a metric conformal to the Kerr metric. The first two of these examples
require a different variant of the construction of an associated K\"ahler metric.
The complex structure for the NUT metric was first described in \cite{flaherty}.
Dixon~\cite{dix} has recently studied another K\"ahler metric Wick-rotated from the
Kerr metric on a domain in Kerr spacetime, and showed it is ambitoric (see also \cite{al-sc}).



\section{Shear and twist}\lb{sec:shear0}

In this preliminary section we introduce variants of the notions
of \emph{shear} and \emph{twist}, the optical invariants that will have a significant
role in what follows. After describing their expressions
in appropriate frames, we compare our version to the more standard
one for null vector fields with a geodesic or pre-geodesic flow.
We then describe a few known applications valid especially for $3$-manifolds,
which will be needed in Section~\ref{warp}.

\subsection{Relative versions of shear and twist}\lb{rel-sr-tw}
Our notions of shear and twist will differ somewhat
from their common usage in the Physics literature, and also from
mathematical references such as \cite{calped}.
The need for these atypical definitions arises, in small part, from
their application to vector fields on semi-Riemannian manifolds which
may not be null, or even of constant length. But more importantly, the difference
is attributed to the fact that we consider \emph{two} (pointwise linearly independent)
distinguished vector fields, rather than just one. Thus our shear and twist
will be defined not with respect to the orthogonal complement of a single
vector field, but {\em relative} to a decomposition of the tangent bundle into
an orthogonal direct sum of distributions, one of which is spanned by these
two vector fields.
Specifically, for a semi-Riemannian manifold $M$,
let
\be\lb{split}
TM=\VV\oplus\HH
\end{equation}
be an orthogonal decomposition of the tangent bundle into two (necessarily
nondegenerate) distributions $\VV$, $\HH$.
Let $\pi\colon TM\to\HH$ be the projection relative to this decomposition, and let $\nabla$ denote the Levi-Civita connection of $M$.

\begin{defn}
\label{def:relative}
Let $X$ be a nowhere-vanishing vector field taking values in $\VV$.
The {\em relative shear operator} and {\em relative twist operator} of $X$ are
defined, respectively, as the $\HH\to\HH$ operators
given by
\[
\begin{aligned}
&\text{relative shear: $\n^o\!X:=$ trace-free symmetric part of $\pi\circ\n X\big|_\HH$,}\\
&\text{relative twist: $\n^s\!X:=$ skew-symmetric part of $\pi\circ\n X\big|_\HH$,}
\end{aligned}
\]
where $\n X$ refers to the linear operator $v\mapsto\n_vX$ on the tangent bundle.  If $\n^o\!X$ or $\n^s\!X$ vanishes, then $X$ is \emph{shear-free} or \emph{twist-free}, respectively.
\end{defn}
These relative optical invariants will be applied throughout most of the paper, and we will
often omit the term ``relative" when using them.

In all our applications the rank of $\HH$ will be two.
In this case the following related entity is real-valued,
and will play an important role.
\begin{defn}
The {\em (relative) twist function} of $X$ is
\[
|\ii|=|\iiX|:=2\sqrt{\det(\n^s\!X)}.
\]
If $\iiX$ is nowhere vanishing, then the flow of $X$ is called \emph{everywhere twisting}.
\end{defn}
The reason for the notation $|\ii|$ is that the twist function is the absolute
value of a function $\ii$ defined in the next subsection with respect to an
orthonormal frame of $\HH$.

In our most common application, the manifold will be Lorentzian of dimension four,
and admit an almost complex structure. Then $\VV$ will be the complex span of some vector field $X$.
In this Lorentzian setting, \emph{we will always choose $\VV$ to be timelike, so that $\HH = \VV^{\perp}$ will be spacelike}, i.e. $g\big|_\HH$ will be positive definite.

\subsection{Frame representation}\lb{sec:shear1}


In the setting of the previous subsection, assume $\HH$ has rank two, and let
$\xx$, $\yy$ be an ordered orthonormal frame for $\HH$. Then at each point, the matrix
of $\pi\circ\n X\big|_\HH$ (like that of $\n X\big|_\HH$) with respect to $\{\xx,\yy\}$ is
given by
$$
\left[\pi\circ\n X|_\HH\right]_{\xx,\yy}\ =\
      \begin{bmatrix}
        \ip{\cd{\xx}{X}}{\xx} & \ip{\cd{\yy}{X}}{\xx}\\
        \ip{\cd{\xx}{X}}{\yy} & \ip{\cd{\yy}{X}}{\yy}\\
      \end{bmatrix}\cdot
$$
Thus the shear operator of $X$ is
\be\lb{shr0}
[\n^o\!X]_{\xx,\yy}=
\begin{bmatrix}
        - \sigma_1 & \sigma_2\\
        \sigma_2 & \sigma_1\\
      \end{bmatrix},
\end{equation}
where the entries are the \emph{shear coefficients}
\be
\begin{aligned}
\sigma_1\ &:=\ 
 \frac{1}{2}\Big[\ip{\cd{\yy}{X}}{\yy} - \ip{\cd{\xx}{X}}{\xx}\Big] = \frac{1}{2}\Big[g([X,\xx],\xx)-g([X,\yy],\yy)\Big],\\
\sigma_2\ &:=\ 
 \frac{1}{2}\Big[\ip{\cd{\yy}{X}}{\xx} + \ip{\cd{\xx}{X}}{\yy}\Big] = -\frac{1}{2}\Big[g([X,\xx],\yy)+g([X,\yy],\xx)\Big]\cdot\label{eqn:shear2}
\end{aligned}
\end{equation}
While these coefficients are frame-dependent, $\sigma_1^2+\sigma_2^2 = -\det\n^o\!X$ is an
invariant quantity.


The twist operator of $X$ is given in this frame by
\[
[\n^s\!X]_{\xx,\yy}\ =
\begin{bmatrix}
         0 & \iiX/2\\
        -\iiX/2 & 0\\
      \end{bmatrix},
\]
where
\be
\label{eqn:twist0}
\iiX=\ii:=\ip{\cd{\yy}{X}}{\xx} - \ip{\cd{\xx}{X}}{\yy}=\ip{X}{[\xx,\yy]}.
\end{equation}
As with the shear coefficients,  $(\iiX)^2 = 4\det\n^s\!X$ is invariant, namely it is the square of the twist function.
\begin{remark}\lb{iota}
Starting from Section~\ref{cx-str} and throughout the paper,
the distribution $\HH$ will always be oriented. We will use this
to {\em fix the sign of $\ii$} by the convention that $\ii$ is always computed as in
\Ref{eqn:twist0} with respect to an {\em oriented orthonormal frame}, oriented to
agree with the orientation of $\HH$.
\end{remark}

Note that relations \Ref{eqn:shear2} have interesting consequences.
For example, if $X$ is a vertical field for a Riemannian submersion, while
$\HH$  is its horizontal distribution, then $X$ is shear-free, because
the bracket of a vertical vector field with a horizontal one is vertical.

\subsection{Classical shear and twist}
\label{sec:shear2}

We briefly compare here our versions of relative twist and relative shear with more standard notions
(see e.g., \cite{thurston} and \cite[Chapter 5]{o1995}). These standard notions will generally
not be used further, except for a version which applies to Riemannian $3$-manifolds, see below.

First, for later purposes we give some terminology related to the equation
\be\lb{pr-gd}
\text{$\cd{\kk}{\kk} = \al\kk$ for some smooth function $\al$ on $M$.}
\end{equation}
\begin{remark}
If equation \Ref{pr-gd} holds, $\kk$ will be called {\em pre-geodesic};
if $\al$ is not identically zero it will be called {\em strictly pre-geodesic}, and {\em geodesic} if $\n_\kk\kk=0$.
\end{remark}
If $\kk$ is a null pre-geodesic field on a Lorentzian 4-manifold, its covariant derivative induces
an operator $D\colon \kk^{\perp}/\kk \lra \kk^{\perp}/\kk$ on a quotient of its orthogonal
space. Then the (non-relative) shear and twist of $\kk$ are defined as the trace-free symmetric, respectively antisymmetric components of this operator.
For the relative case, $\HH$ in Definition \ref{def:relative} is the image of a chosen embedding $\kk^\perp/\kk\to\kk^\perp$, and the definition depends, of course,  on this choice.



Let $(N,g)$ be a Riemannian $3$-manifold, $\kk$ a vector field on $N$ of unit length, so that $\kk \oplus \kk^{\perp} = TN$.
The relevant bundle map is now
$D\colon \kk^{\perp} \lra \kk^{\perp}$, $D(v):=\cd{v}{\kk}$,
which is well defined as $\kk$ has constant length\footnote{By ``length"
of $\kk$ we will almost always mean $g(\kk,\kk)$.}.
In terms of  an orthonormal frame $\{\xx,\yy\}$ of $\kk^{\perp}$,
the shear and twist of $\kk$ are still given by formulas \Ref{eqn:shear2} and \Ref{eqn:twist0}.

\subsection{Preliminary applications in dimensions $3$ and $4$}

For a Riemannian $3$-manifold $(N,g)$ with $\kk$, $\xx$, $\yy$ as in the previous section, Frobenius' theorem implies that $\iik{\kk}$, given as in \eqref{eqn:twist0}
\[
\iik{\kk}=\ip{\kk}{[\xx,\yy]},
\]
vanishes identically if and only if the orthogonal complement $\kk^{\perp}$ of $\kk$ is integrable.
We will be interested in the diametrically opposed situation where the frame independent
twist function $|\ii|$ is nowhere vanishing, so that  $\kk^{\perp}$ is nowhere integrable:
at any $p \in M$, there is \emph{no} embedded submanifold $S$ containing $p$ such that $T_qS = \kk_q^{\perp}$ for all $q \in S$. We record this analysis together with a related result, proven in \cite{hp13}.
\begin{lemma}
\label{lemma:Ray}
If a unit length vector field $\kk$ on a Riemannian $3$-manifold is complete, has geodesic flow, and  \emph{$\text{Ric}(\kk,\kk) > 0$}, then $\kk^{\perp}$ is nowhere integrable. The latter occurs if and only if $\kk$ is everywhere twisting.
\end{lemma}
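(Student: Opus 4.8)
The plan is to reduce both assertions to a Bochner-type (Raychaudhuri) computation along the geodesic flow of $\kk$. The second assertion, that $\kk^\perp$ is nowhere integrable if and only if $\kk$ is everywhere twisting, is immediate from the Frobenius analysis already recorded before the statement: in dimension three, $\HH=\kk^\perp$ has rank two, so integrability of $\kk^\perp$ at a point is equivalent to $g(\kk,[\xx,\yy])=0$ there, i.e. to $\iota^\kk=0$; hence $\kk^\perp$ is nowhere integrable exactly when $|\iota^\kk|$ is nowhere zero, which is the definition of everywhere twisting. So the content is the first sentence, and the ``if and only if" just rephrases it.

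For the first assertion I would argue by contradiction: suppose $\kk^\perp$ is integrable on some nonempty open set, equivalently (by the above) that $\iota^\kk$ vanishes there, equivalently that the twist operator $\n^s\kk$ vanishes there. Combined with the hypothesis that $\kk$ has geodesic flow and unit length, the operator $D\colon \kk^\perp\to\kk^\perp$, $v\mapsto\n_v\kk$, is then symmetric (its skew part is $\n^s\kk=0$) wherever $\iota^\kk=0$. Now restrict attention to a single integral curve $\gamma$ of $\kk$ lying in that open set; completeness of $\kk$ guarantees $\gamma$ is defined for all time. The shape-operator-type quantity $\theta:=\operatorname{tr} D$ (the expansion) satisfies the Riccati/Raychaudhuri equation along $\gamma$:
\[
\kk(\theta) \;=\; -\operatorname{tr}(D^2) \;-\; \mathrm{Ric}(\kk,\kk).
\]
This is the standard derivation: differentiate $D$ along $\kk$ using $\n_\kk\kk=0$, obtaining $\n_\kk D = -D^2 - R_\kk$ where $R_\kk(v)=R(v,\kk)\kk$, and take traces, using $\operatorname{tr} R_\kk=\mathrm{Ric}(\kk,\kk)$. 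Since $D$ is symmetric on $\kk^\perp$ (a $2$-dimensional space), Cauchy--Schwarz gives $\operatorname{tr}(D^2)\ge \tfrac12\theta^2$, so with $\mathrm{Ric}(\kk,\kk)>0$ one gets $\kk(\theta)\le -\tfrac12\theta^2 - c$ for a positive constant $c$ along $\gamma$ (or, more carefully, $\kk(\theta)\le -\tfrac12\theta^2$, which already forces $\theta\to-\infty$ in finite time once $\theta$ is negative, and the $-\mathrm{Ric}$ term drives $\theta$ negative). Either way the comparison ODE $\dot u = -\tfrac12 u^2$ blows up to $-\infty$ in finite forward or backward time, contradicting the smoothness of $\theta$ along the complete geodesic $\gamma$. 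This is exactly the mechanism of the Hawking--Penrose singularity-type argument used in \cite{hp13}, whose result we are permitted to invoke.

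The main obstacle is a subtlety of localization: the Raychaudhuri blow-up argument needs $D$ to remain a well-defined symmetric operator along the \emph{entire} curve $\gamma$, whereas a priori we only assumed integrability of $\kk^\perp$, hence vanishing of the twist, on some open set $U$, and $\gamma$ may leave $U$. One must therefore be careful about whether twist-freeness propagates along the flow. Here I would use that $\n_\kk\kk=0$ together with the evolution equation for the skew part of $D$: differentiating the twist operator along $\kk$ shows $\n^s\kk$ satisfies a \emph{linear} ODE along $\gamma$ (schematically $\n_\kk(\n^s\kk) = -(\text{symmetric})\circ(\n^s\kk)-(\n^s\kk)\circ(\text{symmetric})$, since the curvature term $R_\kk$ is symmetric and contributes nothing to the skew part), so if $\n^s\kk$ vanishes at one point of $\gamma$ it vanishes along all of $\gamma$ by uniqueness for linear ODEs. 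Thus twist-freeness is preserved by the geodesic flow, $D$ is symmetric along the whole of the complete curve $\gamma$, and the Raychaudhuri contradiction goes through globally. This closes the argument; the remaining steps (the exact Riccati derivation, the Cauchy--Schwarz inequality, and the finite-time blow-up of $\dot u=-\tfrac12 u^2 - c$) are routine.
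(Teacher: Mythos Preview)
Your argument is correct and follows the standard Raychaudhuri route: propagate twist-freeness along the geodesic via the linear ODE $\dot\omega=-\theta\,\omega$ for the skew part of $D$ (in rank two this is exactly the equation you describe, since $SA+AS=(\operatorname{tr}S)A$), then use the Riccati inequality $\dot\theta\le -\tfrac12\theta^2-\mathrm{Ric}(\kk,\kk)$ to force a finite-time blow-up of $\theta$ in forward or backward time, contradicting completeness. The paper itself does not give a proof of this lemma; it simply records it as ``proven in \cite{hp13}'', so there is no in-paper argument to compare against. Your outline is essentially the content of that reference.

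One small presentational point: the correct negation of ``$\kk^{\perp}$ is nowhere integrable'' is the existence of an integral surface through \emph{some} point $p$, which gives only $\iota^{\kk}(p)=0$, not vanishing on an open set. You begin with the open-set formulation but then recognise the issue and reduce to a single point via the ODE propagation, so the final logic is sound; it would be cleaner to start directly from ``suppose $\iota^{\kk}(p)=0$ for some $p$''.
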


Next, we record the following well-known lemma, omitting its
standard proof.
\begin{lemma}
\label{lemma:KVF}
A unit length vector field on a Riemannian $3$-manifold is a Killing vector field if and only if it is geodesic, divergence-free, and shear-free.
\end{lemma}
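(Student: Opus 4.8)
The plan is to prove both implications by a direct frame computation, using the formulas for shear and twist recorded in \eqref{eqn:shear2} and \eqref{eqn:twist0}, together with the fact that on a Riemannian $3$-manifold the Levi-Civita connection of the unit field $\kk$ is entirely determined by its action on an orthonormal frame $\{\kk,\xx,\yy\}$. First I would fix such a frame, with $\{\xx,\yy\}$ spanning $\kk^\perp$, and record the nine quantities $g(\cd{e_i}{\kk},e_j)$. Since $g(\kk,\kk)=1$ is constant, $g(\cd{v}{\kk},\kk)=0$ for all $v$, so three of these vanish; the operator $\cd{\cdot}{\kk}$ on $\kk^\perp$ is what Lemma~\ref{lemma:KVF} concerns. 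The Killing equation for $\kk$ says exactly that $v\mapsto\cd{v}{\kk}$ is skew-symmetric as an operator on all of $TN$; combined with $g(\cd{v}{\kk},\kk)=0$, this is equivalent to: (a) $g(\cd{\kk}{\kk},v)=-g(\cd{v}{\kk},\kk)=0$ for $v\in\kk^\perp$, i.e.\ $\cd{\kk}{\kk}=0$ (geodesic); and (b) the restriction of $\cd{\cdot}{\kk}$ to $\kk^\perp\to\kk^\perp$ is skew-symmetric.

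The second step is to unpack (b). Writing the $2\times2$ matrix of $\cd{\cdot}{\kk}\big|_{\kk^\perp}$ in the frame $\{\xx,\yy\}$, its symmetric part splits into the trace part (proportional to the identity, with trace equal to $g(\cd{\xx}{\kk},\xx)+g(\cd{\yy}{\kk},\yy)$) and the trace-free symmetric part, which by \eqref{shr0} is precisely $\n^o\kk$. But on a $3$-manifold with unit $\kk$ one has $\operatorname{div}\kk = g(\cd{\kk}{\kk},\kk)+g(\cd{\xx}{\kk},\xx)+g(\cd{\yy}{\kk},\yy) = g(\cd{\xx}{\kk},\xx)+g(\cd{\yy}{\kk},\yy)$ since the first term vanishes. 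Hence the symmetric part of $\cd{\cdot}{\kk}\big|_{\kk^\perp}$ vanishes if and only if both $\operatorname{div}\kk=0$ and $\n^o\kk=0$, i.e.\ $\kk$ is divergence-free and shear-free. So (b) $\iff$ divergence-free and shear-free, and together with (a) $\iff$ geodesic, this gives: $\kk$ Killing $\iff$ geodesic, divergence-free, shear-free. (Note the twist plays no role — a Killing field may certainly be twisting.)

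For the converse direction one simply reverses this: assuming $\kk$ is geodesic, divergence-free and shear-free, the computations above show $g(\cd{v}{\kk},w)+g(\cd{w}{\kk},v)=0$ for $v,w$ each ranging over $\kk$ and a chosen orthonormal basis of $\kk^\perp$ (the $(\kk,\kk)$ case is automatic from constant length, the $(\kk,v)$ cases from geodesic, the $(v,w)$ cases with $v,w\in\kk^\perp$ from div-free plus shear-free), and by bilinearity this extends to all $v,w\in TN$, which is the Killing equation. The argument is essentially a bookkeeping exercise once the frame is set up; the only mildly delicate point — really the ``main obstacle,'' though it is minor — is keeping straight which of the three ingredients (geodesic, divergence-free, shear-free) controls which block of the matrix of $\cd{\cdot}{\kk}$, and in particular seeing that the diagonal of the $\kk^\perp$-block being controlled by the divergence uses the vanishing of $g(\cd{\kk}{\kk},\kk)$, which is free from $|\kk|$ being constant. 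Since the paper explicitly labels this lemma well-known and chooses to omit the proof, I would in fact just present this as the standard verification and move on.
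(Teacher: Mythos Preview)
Your proof is correct and is precisely the standard frame verification one would expect; the paper itself omits the proof entirely, merely recording the lemma as well-known, so there is nothing to compare against beyond noting that your argument is exactly the routine computation the authors presumably had in mind.
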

The last two lemmas will only be applied in Section \ref{warp}.

Similarly, \emph{On a Lorentzian $4$-manifold the integrability of $\kk^{\perp}$, for a null pre-geodesic vector field $\kk$, is completely determined by the vanishing of the twist
function $|\ii| = |g(\kk,[\xx,\yy])|$} (see \cite[Ch. 5]{o1995}).  Moreover, a well-known
result holds in analogy with Lemma \ref{lemma:Ray}; see, e.g. \cite{AA} for a proof.
\begin{lemma}
\label{lemma:Ray2}
If a null vector field $\kk$ with geodesic flow on a Lorentzian $4$-manifold is complete and \emph{$\text{Ric}(\kk,\kk) > 0$}, then $\kk^{\perp}$ is nowhere integrable.
The latter occurs if and only if $\kk$ is everywhere twisting.
\end{lemma}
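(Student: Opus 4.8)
The second sentence merely records the Frobenius analysis carried out just above the lemma: since $\kk$ is null and pre-geodesic, $g(\kk,[\kk,\xx]) = g(\kk,[\kk,\yy]) = 0$, so the only obstruction to involutivity of $\kk^{\perp}$ at a point is the twist $g(\kk,[\xx,\yy])$; hence $\kk^{\perp}$ is nowhere integrable precisely when $|\ii|$ has no zeros, i.e. when $\kk$ is everywhere twisting. It therefore suffices to prove that, under the stated hypotheses, $|\ii|$ vanishes nowhere. The plan is to argue by contradiction: assume $g(\kk,[\xx,\yy]) = 0$ at some $p \in M$ and reach a contradiction by a Raychaudhuri focusing argument along the null geodesic through $p$, exactly in the scheme behind Lemma~\ref{lemma:Ray} and \cite{hp13}, and carried out for this setting in \cite{AA}.

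First I would show that the twist then vanishes along the whole integral curve $\gamma$ of $\kk$ through $p$. Writing $\kb$ for the $g$-dual of $\kk$, we have $\kb(\kk) = g(\kk,\kk) = 0$, and since $\kk$ is geodesic and null, $(d\kb)(\kk,Y) = g(\n_\kk\kk,Y) + \tfrac12 Y\big(g(\kk,\kk)\big) = 0$ for all $Y$; by Cartan's formula $\mathcal{L}_\kk\kb = 0$, hence $\mathcal{L}_\kk(\kb \wedge d\kb) = 0$. Thus the flow of $\kk$ preserves the $3$-form $\kb \wedge d\kb$, and a short computation gives $\kb\wedge d\kb = -g(\kk,[\xx,\yy])\,\kb\wedge\xx^\flat\wedge\yy^\flat$ in an adapted orthonormal frame, so this form vanishes exactly at the zeros of the twist. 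Since it vanishes at $p$, it vanishes along all of $\gamma$, i.e. $|\ii| \equiv 0$ on $\gamma$. By completeness, $\gamma$ is an affinely parametrized null geodesic defined for all $\lambda \in \RR$.

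Next I would invoke the Raychaudhuri equation for the expansion $\theta = \operatorname{tr} D$ of the operator $D$ induced by $\kk$ on the rank-two bundle $\kk^{\perp}/\kk$ along $\gamma$,
\[
\dot\theta \;=\; -\tfrac12\theta^2 \;-\; |\n^o\!\kk|^2 \;+\; |\n^s\!\kk|^2 \;-\; \text{Ric}(\kk,\kk),
\]
in which the shear term $|\n^o\!\kk|^2$ is nonnegative and $|\n^s\!\kk|^2 = \tfrac12|\ii|^2$. Since $|\ii| \equiv 0$ along $\gamma$ and $\text{Ric}(\kk,\kk) > 0$, this yields $\dot\theta \le -\tfrac12\theta^2 - \text{Ric}(\kk,\kk) < -\tfrac12\theta^2$ on $\gamma$, and the standard comparison argument forces $\theta \to -\infty$ at a finite affine parameter: $\theta$ cannot stay positive for all $\lambda \in \RR$, since then $\tfrac{d}{d\lambda}(1/\theta) \ge \tfrac12$ would make $1/\theta$ negative far in the past; so $\theta(\lambda_*) \le 0$ for some $\lambda_*$, after which $\dot\theta < 0$ renders $\theta$ strictly negative and $\tfrac{d}{d\lambda}(1/\theta) \ge \tfrac12$ drives $\theta$ to $-\infty$ within bounded additional affine parameter. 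But $D$, hence $\theta$, is smooth along the complete geodesic $\gamma$ and therefore finite for every $\lambda \in \RR$ — a contradiction. Hence $|\ii|$ has no zeros, so $\kk$ is everywhere twisting and, by the equivalence above, $\kk^{\perp}$ is nowhere integrable.

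The step I expect to demand the most care is the Raychaudhuri bookkeeping: one must verify that the relative optical invariants of Definition~\ref{def:relative}, computed with $\HH$ an isometric embedding of $\kk^{\perp}/\kk$, coincide with the usual expansion, shear and twist of the null geodesic congruence generated by $\kk$, and in particular that the twist enters with a $+$ sign (so that its vanishing genuinely helps) while the shear enters as $-|\n^o\!\kk|^2 \le 0$. This is routine but notation-heavy for a null geodesic congruence, and formally identical to the Riemannian $3$-manifold computation underlying Lemma~\ref{lemma:Ray}. Note that completeness is used only at the very end, to guarantee that $\gamma$ extends past the focal parameter, turning the blow-up of $\theta$ into a genuine contradiction.
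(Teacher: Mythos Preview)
Your argument is correct and is exactly the standard Raychaudhuri focusing proof: propagate the vanishing of the twist along the complete null geodesic and derive a finite-time blow-up of the expansion, contradicting smoothness of $\theta=\operatorname{tr} D$. The paper itself does not supply a proof of this lemma but simply refers to \cite{AA} (and, for the Riemannian analogue Lemma~\ref{lemma:Ray}, to \cite{hp13}); your write-up is precisely the argument one expects to find there, including the Frobenius equivalence already spelled out in the paragraph preceding the lemma.
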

\section{Almost complex structures and integrability}\lb{cx-str}

In this section we introduce a key component of this work, namely
an almost complex structure attached to a semi-Riemannian $4$-manifold
equipped with certain data involving two vector fields. We investigate
its integrability and related properties.

\subsection{Admissible almost complex structures}\lb{defJ}
Let $(M,g)$ be an oriented semi-Riemannian $4$-manifold,
with two vector fields $\kk_+$, $\kk_-$.  Let $\VV:=\mathrm{span}(\kk_+,\kk_-)$
denote the distribution spanned at each point $p$ by $\kk_+|_p$, $\kk_-|_p$.
We assume that
\be\lb{nnsing}
\text{$\kk_+$, $\kk_-$ are everywhere linearly independent,}
\end{equation}
and
\be\lb{space}
\text{$\HH:=\mathrm{span}(\kk_+,\kk_-)^\perp$ is spacelike.}
\end{equation}
\begin{remark}\lb{re-phrs}
Note in particular that $\kk_\pm$  then have no zeros.
Condition \Ref{space} means that  $g\big|_\HH$
is positive definite at each point, in particular it is pointwise nondegenerate,
which also implies that $g\big|_\VV$ is pointwise nondegenerate.
This last condition together with \Ref{nnsing} are equivalent to
\be\lb{nnsing1}
\text{$A:=\begin{bmatrix}g(\kk_+,\kk_+) & g(\kk_+,\kk_-)\\
                      g(\kk_-,\kk_+) & g(\kk_-,\kk_-)\end{bmatrix}$ is
                      everywhere nonsingular,}
\end{equation}
Thus \Ref{nnsing} and \Ref{space} are equivalent to \Ref{nnsing1} and \Ref{space}.
Finally, if $g$ is Lorentzian, \Ref{space} is equivalent to
\be\lb{time}
\text{$\VV=\mathrm{span}(\kk_+,\kk_-)$ is timelike, }
\end{equation}
i.e. $g\big|_\VV $ has Lorentzian signature at each point.
In that case 
\Ref{time} implies
\be\lb{G}
G:=\det(A)<0.
\end{equation}
\end{remark}

We consider an almost complex structure
$J=\JJ$ on $M$ defined as follows. First, we set $J\kk_+:=\kk_-$,
$J\kk_-:=-\kk_+$ and extend these relations linearly on $\VV$.
Second, we note that the restriction of $g$ to $\HH$ 
is positive definite, and $\HH$ inherits an orientation because $M$ is oriented and the ordered pair $\kk_+$, $\kk_-$ induces an orientation on $\VV$. We thus define $J\big|_\HH$ to be the unique endmorphism of $\HH$ whose square is minus the identity, which is additionally an isometry of $g\big|_\HH$
and respects the orientation on $\HH$.
Finally, we define $J$ by extending $J\big|_\VV$, $J\big|_\HH$ linearly
on $TM=\VV\oplus\HH$.
\begin{defn}\lb{J-adm}
An almost complex structure $J=\JJ$ on an oriented $4$-manifold
is called {\em admissible} if it is constructed as above, using
a semi-Riemannian metric $g$, and two vector fields $\kk_\pm$ satisfying
\Ref{nnsing}, \Ref{space}. If the integrability relations \Ref{Nij}
below also hold, $J$ will be called an {\em admissible complex structure}.
\end{defn}

\subsection{Integrability}
Integrability of an almost complex structure $J$ implies the manifold admits complex coordinates.
It is defined by the vanishing of the Nijenhuis tensor
\[
N(a,b) = [Ja, Jb] - J[Ja, b] - J[a, Jb] - [a, b].
\]
We now give sufficient conditions for integrability of an admissible almost complex structure.
\begin{thm}\lb{integ} An admissible almost complex structure $J=J_{g,\kk_\pm}$
is integrable if the following three conditions
hold:
\be\lb{Nij}
\mathrm{i})\text{ $[\kk_\pm,\Gamma(\HH)]\subset\Gamma(\HH),\qquad$   \quad $\mathrm{ii})$\ $J\n^o\kk_+=\n^o\!J\kk_+$ on $\HH$.}
\end{equation}
\end{thm}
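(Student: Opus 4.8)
The plan is to compute the Nijenhuis tensor $N(a,b)$ on pairs of vector fields drawn from the direct sum $TM = \VV \oplus \HH$, exploiting bilinearity and antisymmetry of $N$ so that only three types of pairs need be checked: (a) both arguments in $\VV$, (b) one in $\VV$ and one in $\HH$, (c) both arguments in $\HH$. Because $N$ is a tensor, it suffices to verify vanishing on a convenient local frame, and I would use $\{\kk_+, \kk_-\}$ for $\VV$ together with an \emph{oriented orthonormal frame} $\{\xx, \yy\}$ of $\HH$ adapted so that $J\xx = \yy$, $J\yy = -\xx$ (using the construction of $J\big|_\HH$ from subsection~\ref{defJ} and Remark~\ref{iota}).

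For case (a), $N(\kk_+,\kk_-)$ involves only brackets of $\kk_+$ and $\kk_-$ with themselves and each other; since $J$ acts on $\VV$ as a fixed complex structure and $\VV$ has rank two, the computation $N(\kk_+,\kk_-) = [J\kk_+, J\kk_-] - J[J\kk_+,\kk_-] - J[\kk_+, J\kk_-] - [\kk_+,\kk_-]$ collapses after substituting $J\kk_+ = \kk_-$, $J\kk_- = -\kk_+$, and one sees the $\VV$-components cancel identically while the $\HH$-components cancel in pairs — so case (a) is essentially automatic and needs no hypothesis. For case (b), say $N(\kk_+, \xx)$: here condition (i) of \Ref{Nij}, namely $[\kk_\pm, \Gamma(\HH)] \subset \Gamma(\HH)$, is exactly what guarantees that all the brackets $[\kk_\pm, \xx]$, $[\kk_\pm, \yy]$ stay in $\HH$, so that applying $J$ to them is governed by $J\big|_\HH$ and not by the $\VV$-part of $J$; one then expands and checks cancellation, which should go through once (i) is in force, perhaps also using that $J\big|_\HH$ is parallel along $\HH$-... no, more precisely using the explicit frame action of $J$ on $\HH$.

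Case (c), $N(\xx,\yy)$, is where I expect the real work and where condition (ii) enters. Writing $N(\xx,\yy) = [J\xx, J\yy] - J[J\xx, \yy] - J[\xx, J\yy] - [\xx,\yy] = [\yy, -\xx] - J[\yy,\yy] - J[\xx,-\xx] - [\xx,\yy] = -[\yy,\xx] - [\xx,\yy] = 0$ — wait, that is too quick because $J\big|_\HH$ is not constant, so $J[J\xx,\yy]$ is not simply $J$ applied to a bracket of frame fields with the naive identification; the subtlety is that the $\VV$-components of the brackets $[\xx,\yy]$ etc. contribute, and the derivatives of the frame rotation interact with $J$. The honest computation expresses the brackets' $\HH$-components in terms of the connection $\n$ and the shear/twist operators via \Ref{eqn:shear2} and \Ref{eqn:twist0}, and the $\VV$-components via $g([\xx,\yy],\kk_\pm)$ which are controlled by symmetry/skew-symmetry. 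After organizing terms, the obstruction to $N(\xx,\yy) = 0$ reduces precisely to the commutation relation $J\n^o\kk_+ = \n^o(J\kk_+)$ on $\HH$ — this is the content of (ii), and verifying that the leftover terms assemble into exactly this condition (and that the twist and divergence parts cancel on their own) is the main obstacle. I would handle it by writing $\pi \circ \n\kk_\pm\big|_\HH$ as (shear) $+$ (twist) $+$ (trace part), noting that the twist and trace parts automatically commute with $J\big|_\HH$ (the twist operator is a multiple of $J\big|_\HH$ itself, and the trace part is a multiple of the identity), so that only the shear part can obstruct — reducing everything to (ii).
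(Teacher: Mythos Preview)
Your case-by-case plan is reasonable in spirit, but you have misidentified where condition~(ii) actually enters, and this is a genuine gap.

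Case~(c) is trivial: since $\HH$ has rank two and $\yy=J\xx$, the pair $(\xx,\yy)$ is of the form $(a,Ja)$, and $N(a,Ja)=0$ is an algebraic identity for \emph{any} almost complex structure and any vector field $a$. Your first quick computation was correct; the second-guessing (``wait, that is too quick because $J\big|_\HH$ is not constant'') is misplaced---the Nijenhuis tensor is defined pointwise in terms of brackets and $J$ applied to vector fields, and the identity $N(a,Ja)=0$ survives regardless of how $J$ varies. So the shear condition cannot possibly be what makes case~(c) vanish; it is zero for free. Likewise case~(a) is just $N(\kk_+,J\kk_+)=0$.

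The real content is in case~(b). Condition~(i) tells you that $N(\kk_+,\xx)$ lies in $\HH$ (each of the four bracket terms in $N(\kk_+,\xx)$ is a bracket of some $\kk_\pm$ with some section of $\HH$, hence lands in $\HH$, and $\HH$ is $J$-invariant). But (i) alone does \emph{not} make the $\HH$-components vanish. When you compute $g(N(\kk_+,\xx),\xx)$ and $g(N(\kk_+,\xx),\yy)$ using the Lie-bracket expressions \Ref{eqn:shear2} for the shear coefficients, you obtain precisely combinations like $-2\sigma_1^{\kk_+}-2\sigma_2^{\kk_-}$ and $2\sigma_2^{\kk_+}-2\sigma_1^{\kk_-}$, which assemble into $\n^o\kk_+-\n^o\kk_-\circ J$ on $\HH$. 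This vanishes exactly when $J\n^o\kk_+=\n^o(J\kk_+)$, using that $J\big|_\HH$ anticommutes with any trace-free symmetric operator on $\HH$. That is where (ii) lives.

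The paper streamlines the bookkeeping further: using $N(a,b)=JN(a,Jb)$ together with antisymmetry, one reduces the entire computation to the single pair $(\kk_+,\xx)$, so cases~(a) and~(c) never need to be written down at all.
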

The notation of i) means that the Lie bracket operation with $\kk_+$ or $\kk_-$ sends any vector field in $\HH$ to another such vector field. The shear notation in ii) is as in Definition~\ref{def:relative}.

Note that condition ii) is invariant under multiplying both vector fields $\kk_\pm$ by a
common factor, while conditions i) are not. This has the consequence that the conclusion of the
theorem still holds if conditions i) are replaced by the requirement that they hold instead
with $\kk_\pm$ replaced by some common nowhere vanishing multiple of themselves.
\begin{proof}
We examine the Nijenhuis tensor of $J$
for a frame $\{\kk_\pm$, $\xx_\pm\}$, where
$\{\xx_+,\xx_-\!\!=\!\!J\xx_+\}$ is an oriented orthonormal frame for $g\big|_\HH$.
Clearly $N$ vanishes on any pair $a$, $Ja$. The relation
$N(a,b)=JN(a,Jb)$ along with the antisymmetry of $N(a,b)$
imply that it is enough to check the vanishing for the pair
$\kk_+$, $\xx_+$.

We thus analyze
\be\lb{Nijen}
\begin{aligned}
N(\kk_+,\xx_+)& = [J\kk_+, J\xx_+] - J[J\kk_+, \xx_+] - J[\kk_+, J\xx_+] - [\kk_+, \xx_+]\\
            & = [J\kk_+,   \xx_-] - J[J\kk_+,  \xx_+] - J[\kk_+,  \xx_-] - [\kk_+, \xx_+].
\end{aligned}
\end{equation}
Conditions \Ref{Nij}i) along with the $J$-invariance of $\HH$ imply
via \Ref{Nijen} that $N(\kk_+,\xx_+)$ is a section of $\HH$.

Next, taking the inner product of the right-hand side of \Ref{Nijen} with $\xx_+$,
while employing the fact that $g\big|_\HH$ is hermitian, we arrive at the following expression:
\begin{equation*}
\begin{aligned}
g(N(\kk_+,\xx_+),\xx_+)
&=g([J\kk_+,\xx_-],\xx_+)+g([J\kk_+,\xx_+],\xx_-)\\
&+g([\kk_+,\xx_-],\xx_-)-g([\kk_+,\xx_+],\xx_+)
\end{aligned}
\end{equation*}
Referring now to the shear coefficient expressions \Ref{eqn:shear2},
the above yields the following two equalities, the second obtained
in analogy with the first:
\[
\begin{aligned}
g(N(\kk_+,\xx_+),\xx_+)&= -2\sigma_1^{\kk_+}-2\sigma_2^{J\kk_+},\\
g(N(\kk_+,\xx_+),\xx_-)&= +2\sigma_2^{\kk_+}-2\sigma_1^{J\kk_+},\\
\end{aligned}
\]
where the shear-related notations are as in Section \ref{sec:shear1}.
Since the action of the shear matrix \Ref{shr0}
on each of the standard basis vectors in $\mathbb{R}^2$ yields
$(-\sig_1, \sig_2)$ and $(\sig_2, \sig_1)$, respectively,
the last two equations yield the invariant formula
\be\lb{Nkplus}
\ii_{\kk_+}N=2(\n^o\kk_+-\n^oJ\kk_+\circ J)\ \mathrm{on}\ \HH.
\end{equation}
In fact, they yield equality of both sides on $\xx_+$, and we obtain it on $\xx_-$ because of both
$N(\kk_+,\xx_-)=-JN(\kk_+,\xx_+)$ and the fact that $J\big|_\HH$ anticommutes with any trace-free
symmetric operator $P$ acting on $\HH$. This last fact holds since $J$ makes $g\big|_\HH$ hermitian, so that the adjoint
of $PJ$ is $-JP$, while the trace-free condition implies $g(PJ\xx_+,\xx_-)=g(P\xx_-,\xx_-)=-g(P\xx_+,\xx_+)=g(PJ\xx_-,\xx_+)$,
so that $PJ$ is also self-adjoint.

By applying the last mentioned fact to \Ref{Nkplus}, the theorem follows.
\end{proof}
A generalization of this theorem will appear
in subsection \ref{slf-adj}.
\begin{remark}\lb{doubleJ}
It is easily seen that if $J$ satisfies the conditions of this theorem,
the almost complex structure defined just as $J$,
but with respect to the opposite orientation, will also be integrable
if $\kk_\pm$ are shear-free.
\end{remark}

Note that \Ref{nnsing1} implies that \Ref{Nij}i)
is equivalent to the four conditions
\be\lb{alter}
\text{$g([\kk_\pm,\cdot],\kk_\pm)=0$ and $g([\kk_\pm,\cdot],\kk_\mp)=0$ on $\HH$.}
\end{equation}
The converse of Theorem~\ref{integ} does not hold in general. However, we have
\begin{prop}\lb{int-nec}
If three of the conditions \Ref{alter} hold, and $N=0$, then the fourth, along with ii) of \Ref{Nij}, also hold.
\end{prop}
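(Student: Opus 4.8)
The plan is to split the Nijenhuis tensor into its $\VV$- and $\HH$-components at each point, using that $J$ preserves the decomposition $TM=\VV\oplus\HH$. I would begin with a normalization: the substitution $(\kk_+,\kk_-)\mapsto(\kk_-,-\kk_+)$ leaves $\VV$ and the orientation it induces unchanged (the relevant change of basis has determinant $1$), hence leaves $J$ — and therefore $N$ — unchanged, while interchanging the pair of relations $g([\kk_+,\cdot],\kk_\pm)=0$ on $\HH$ with the pair $g([\kk_-,\cdot],\kk_\pm)=0$ on $\HH$. Since any three of the four relations of \Ref{alter} contain one of these two pairs in full, after possibly applying this substitution we may assume $[\kk_+,\Gamma(\HH)]\subset\Gamma(\HH)$ (equivalent to the $\kk_+$-pair by \Ref{nnsing1}), the third assumed relation being one of $g([\kk_-,\cdot],\kk_\pm)=0$ on $\HH$. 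The task is then to deduce the remaining one of these two relations and condition ii) of \Ref{Nij}.

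Next, fix pointwise an oriented orthonormal frame $\{\xx_+,\xx_-=J\xx_+\}$ of $\HH$ and examine $N(\kk_+,\xx_+)$. Since $[\kk_+,\xx_\pm]$, and hence $J[\kk_+,\xx_-]$, lie in $\HH$, the $\VV$-component of
\[
N(\kk_+,\xx_+)=[\kk_-,\xx_-]-J[\kk_-,\xx_+]-J[\kk_+,\xx_-]-[\kk_+,\xx_+]
\]
equals $Q-JP$, where $P,Q\in\VV$ denote the $\VV$-components of $[\kk_-,\xx_+]$, $[\kk_-,\xx_-]$ respectively (here I use that $J$ commutes with the projection onto $\VV$). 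Now $N=0$ forces $Q=JP$, while the third assumed relation says that $P$ and $Q$ are $g$-orthogonal to the same one of $\kk_+,\kk_-$, hence both lie in a fixed one-dimensional subspace $\ell\subset\VV$ (one-dimensional because $g\big|_\VV$ is nondegenerate and $\kk_\pm$ vanish nowhere); thus $JP\in\ell\cap J\ell$. But $J\big|_{\VV_p}$ is a complex structure on a two-dimensional space, so it has no real eigenline, whence $J\ell\neq\ell$, $\ell\cap J\ell=\{0\}$, and $P=Q=0$. As the $\VV$-component of $[\kk_-,\cdot]$ restricted to $\HH$ is tensorial and $\{\xx_+,\xx_-\}$ is a pointwise frame, this yields $[\kk_-,\Gamma(\HH)]\subset\Gamma(\HH)$; in particular the fourth relation of \Ref{alter} holds, and all of \Ref{Nij}i) is now in force.

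With \Ref{Nij}i) established we are precisely in the setting of the proof of Theorem~\ref{integ}, which produces formula \Ref{Nkplus}, namely $\ii_{\kk_+}N=2(\n^o\kk_+-\n^o J\kk_+\circ J)$ on $\HH$. Since $N=0$ this gives $\n^o\kk_+=\n^o J\kk_+\circ J$ on $\HH$; and because $\n^o J\kk_+=\n^o\kk_-$ is trace-free symmetric it anticommutes with $J\big|_\HH$ (as recorded near the end of that proof), so $\n^o J\kk_+\circ J=-J\circ\n^o J\kk_+$, and therefore $J\n^o\kk_+=-J^2\n^o J\kk_+=\n^o J\kk_+$, which is exactly ii). The main obstacle, as I see it, is the linear-algebra observation in the middle step — that a complex structure on a plane stabilizes no line — together with the bookkeeping needed to confirm that the normalizing substitution genuinely fixes $J$; everything else is bracket computation already carried out in the proof of Theorem~\ref{integ}.
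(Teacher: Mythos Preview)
Your proof is correct and somewhat different in execution from the paper's. The paper treats one representative case (the three relations $g([\kk_+,\cdot],\kk_+)=g([\kk_-,\cdot],\kk_-)=g([\kk_+,\cdot],\kk_-)=0$), expands $[\kk_-,\xx_\pm]$ in the frame using the inverse of the Gram matrix $A=[g(\kk_\pm,\kk_\pm)]$, reads off the $\VV$-coefficients of $N(\kk_+,\xx_+)$, and from $N=0$ extracts the pair of scalar equations $qB=rC$, $-rB=qC$, which force $B=C=0$ because $q,r$ cannot vanish simultaneously (else $\det A=0$). Your argument replaces this coordinate computation by two cleaner observations: the substitution $(\kk_+,\kk_-)\mapsto(\kk_-,-\kk_+)$ fixes $J$ and permutes the four relations in pairs, so one may always assume the full $\kk_+$-pair holds; and then the $\VV$-component of $N(\kk_+,\xx_+)=0$ reads $Q=JP$ with $P,Q$ confined to a single line $\ell\subset\VV$, which forces $P=Q=0$ since a complex structure on a real plane has no invariant line. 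This is more conceptual and handles all four cases uniformly, whereas the paper's ``for example'' leaves the remaining cases to the reader. For condition ii) both proofs are the same: once \Ref{Nij}i) holds, formula \Ref{Nkplus} combined with $N=0$ and the anticommutation of $J\big|_\HH$ with trace-free symmetric operators yields $J\n^o\kk_+=\n^oJ\kk_+$.
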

\begin{proof}
Assume $N=0$ and, for example, $g([\kk_+,\cdot], \kk_+)=g([\kk_-,\cdot], \kk_-)=g([\kk_+,\cdot], \kk_-)=0$ on $\HH$. Let
\[
A=[g(\kk_\pm,\kk_\pm)]:=\begin{bmatrix} p & r\\ r & q\end{bmatrix},
\]
and express the following vector fields in our standard frame:\\
$[\kk_-,\xx_-]=a\kk_++b\kk_-+\cdots$,
$[\kk_-,\xx_+]=c\kk_++d\kk_-+\cdots$, for coefficients $a$, $b$, $c$, $d$.
The coefficients $a$, $b$ may be obtained by applying the inverse of $A$ to
the vector $(g([\kk_-,\xx_-],\kk_+),g([\kk_-,\xx_-],\kk_-))=(g([\kk_-,\xx_-],\kk_+),0)$,
while for obtaining $c$, $d$ one applies the same matrix to
$(g([\kk_-,\xx_+],\kk_+),g([\kk_-,\xx_+],\kk_-))=(g([\kk_-,\xx_+],\kk_+),0)$.
On the other hand similar coefficients for $[\kk_+,\xx_\pm]$ all vanish by our
assumptions. Substituting the above expressions for the Lie bracket terms in $N$,
we see that
\[
N(\kk_+,\xx_+)=(a+d)\kk_++(b-c)\kk_-+\text{terms in $\HH$}.
\]
As $N=0$, these coefficients of $\kk_\pm$ vanish, and together with the above method of obtaining
$a$, $b$, $c$, $d$ this gives the two equations $qB=rC$, $-rB=qC$,
where $B=g([\kk_-,\xx_-],\kk_+)$, $C=g([\kk_-,\xx_+],\kk_+)$. These equations in turn imply
$qr(B^2+C^2)=0$, which easily leads to $B=C=0$, since $q$ and $r$ cannot both vanish.
We thus see that $g([\kk_-,\cdot],\kk_+)$ vanishes on $\HH$.
Then (ii) of \Ref{Nij} follows as in Theorem \ref{integ}.
\end{proof}

\subsection{Geometric conditions implying \Ref{alter}}\lb{geo-cond}
We now consider the question of whether there
are geometric circumstances in which any one of
the four conditions \Ref{alter} holds automatically.
All of them, of course, must be satisfied simultaneously
for condition \Ref{Nij}i) to hold. The observations
we note here, for which we omit the straightforward proofs,
will serve to verify integrability
of admissible almost complex structures appearing
in our examples in Sections~\ref{warp}-\ref{sec:shrfl}.

\begin{remark}\lb{cond-i}
For $\kk=\kk_\pm$, relation $g([\kk, \cdot],\kk)=0$ holds on $\HH$ in two cases:
\be\lb{double}
\begin{aligned}
\text{a)  }&\text{$\kk$ is a pre-geodesic vector field of constant length, or}\\
\text{b) }&\text{$\kk$ is Killing.}
\end{aligned}
\end{equation}

\vspace{.1in}
Next, condition $g([\kk_+,\cdot],\kk_-)=0$ holds on $\HH$ if the following two conditions
{\em both} hold:
\be\lb{near-grad}
\begin{aligned}
\text{i)  }&\text{$\kk_- = \ell\n\ta$, where $\n\ell\in\Gamma(\VV)$, and}\\
\text{ii) }&\text{$\n(g(\kk_+,\kk_-))\in\Gamma(\VV)$.}
\end{aligned}
\end{equation}
Here $\ta$, $\ell$ are smooth functions.
Note that this will also follow if i) of \Ref{near-grad} is replaced by $g(\kk_+,\kk_-)=0$.


\vspace{.1in}
If \Ref{near-grad}i) holds, the condition $g([\kk_-,\cdot ],\kk_+)=0$ on $\HH$
can be translated into the form
\be\lb{semRm}
\text{$g(\n_{\kk_+}\kk_-+\n_{\kk_-}\kk_+,\cdot)=0$ on $\HH$.}
\end{equation}
When both $\kk_+$, $\kk_-$ are pre-geodesic, and $\HH$ is integrable, condition \Ref{semRm} guarantees that $\VV$ is the horizontal distribution for a semi-Riemannian submersion. But integrability of $\HH$ will never, in fact, occur in the circumstances we will be considering later.
\end{remark}

\subsection{Integrability for split-adjoint admissible almost complex structures}\lb{slf-adj}
An admissible almost complex structure $J=\JJ$ will be called {\em split-adjoint}
if $J\big|_\VV$ is $g\big|_\VV$-self-adjoint. We now give a necessary and
sufficient condition for the integrability of such $J$.


\begin{thm}
\label{thm:KerrNUT}
Let $J=\JJ$ be a split-adjoint admissible almost complex structure.
Then $J$ is integrable if and only if the following
conditions hold:
\[
\begin{aligned}
\mathrm{i})\  &g([\kk_-,J\xx],\kk_+)-g([\kk_+,J\xx],\kk_-)-g([\kk_+,\xx],\kk_+)-g([\kk_-,\xx],\kk_-)=0,\\
\text{$\mathrm{ii})$\ }&\text{$J\n^o\kk_+=\n^oJ\kk_+$ on $\HH$},
\end{aligned}
\]
where $\xx$ in i) is any vector field lying in $\HH$.
\end{thm}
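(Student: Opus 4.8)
The plan is to run the same Nijenhuis-tensor computation as in the proof of Theorem~\ref{integ}, but without invoking conditions \Ref{Nij}i), so that the analysis must keep track of the possible $\VV$-components of the relevant Lie brackets. As before, it suffices to examine $N(\kk_+,\xx_+)$ for an oriented orthonormal frame $\{\xx_+,\xx_-=J\xx_+\}$ of $\HH$, since $N$ is antisymmetric and satisfies $N(a,b)=JN(a,Jb)$, and vanishes on any pair $a,Ja$. Writing out \Ref{Nijen} and decomposing each bracket along $\VV\oplus\HH$, the tensor $N(\kk_+,\xx_+)$ will have a $\VV$-part and an $\HH$-part; the key point is that the split-adjoint hypothesis on $J\big|_\VV$ is exactly what is needed to make the $\VV$-part collapse into the single scalar condition i), and the $\HH$-part into condition ii).

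For the $\HH$-part: as in Theorem~\ref{integ}, the $J$-invariance of $\HH$ together with the fact that $g\big|_\HH$ is hermitian gives, upon taking inner products of \Ref{Nijen} with $\xx_\pm$ and using the shear-coefficient formulas \Ref{eqn:shear2}, the identity $\ii_{\kk_+}N = 2(\n^o\kk_+ - \n^oJ\kk_+\circ J)$ on $\HH$ (equation \Ref{Nkplus}), and hence the $\HH$-component of $N$ vanishes if and only if $J\n^o\kk_+=\n^oJ\kk_+$ on $\HH$, which is condition ii). (One must be a little careful here: the shear coefficients are computed from the full brackets $[\kk_\pm,\xx_\pm]$, but only their $\HH$-parts enter, so the presence of nonzero $\VV$-parts does not affect this step — the projection $\pi$ in Definition~\ref{def:relative} takes care of it.)

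For the $\VV$-part: expand $[J\kk_+,\xx_-]$, $[J\kk_+,\xx_+]$, $[\kk_+,\xx_-]$, $[\kk_+,\xx_+]$ in the frame $\{\kk_+,\kk_-,\xx_+,\xx_-\}$ and collect the $\kk_\pm$-coefficients of $N(\kk_+,\xx_+)$ from \Ref{Nijen}; these coefficients are linear combinations of the eight scalars $g([\kk_\pm,\xx_\pm],\kk_\pm)$ (and the analogous ones with $J\kk_+=\kk_-$ and $J\xx_+=\xx_-$), read off via the inverse of the matrix $A$ of \Ref{nnsing1} exactly as in the proof of Proposition~\ref{int-nec}. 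Because $J$ is split-adjoint, $J\big|_\VV$ is $g\big|_\VV$-self-adjoint, and this forces a pairing among these brackets — for instance $g([\kk_-,\xx],\kk_+)$ and $g([\kk_+,\xx],\kk_-)$, and $g([\kk_+,\xx],\kk_+)$ and $g([\kk_-,\xx],\kk_-)$, combine rather than appearing independently — so that the vanishing of the $\VV$-part of $N$ is governed by the single expression in i), evaluated on $\xx=\xx_+$; applying $N(\kk_+,\xx_-)=-JN(\kk_+,\xx_+)$ and the self-adjointness of $J\big|_\VV$ again shows nothing new is obtained from $\xx_-$, and linearity extends i) to all $\xx\in\Gamma(\HH)$. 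The ``only if'' direction is then immediate (set $N=0$), and the ``if'' direction follows by reassembling: i) kills the $\VV$-part, ii) kills the $\HH$-part, so $N(\kk_+,\xx_+)=0$ and hence $N\equiv 0$.

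The main obstacle I expect is the bookkeeping in the $\VV$-part: one has to be disciplined about which bracket's $\VV$-component contributes where in \Ref{Nijen} (note that in Theorem~\ref{integ} all four brackets were assumed $\HH$-valued, so this was invisible), and then verify that the split-adjoint condition produces exactly the contraction pattern displayed in i) rather than two or three independent scalar conditions. A clean way to organize this is to write $J\big|_\VV$ as a $2\times2$ matrix in the $\{\kk_+,\kk_-\}$ basis — it is $\begin{bmatrix}0&-1\\1&0\end{bmatrix}$ by construction — and note that $g\big|_\VV$-self-adjointness of this matrix is the constraint $AJ_\VV=(AJ_\VV)^{t}$, i.e. $A$ anticommutes appropriately with $J_\VV$; expressing the $\kk_\pm$-coefficients of $N$ through $A^{-1}$ and using this relation is what produces the symmetric combination in i). Everything else is a direct transcription of the arguments already given for Theorem~\ref{integ} and Proposition~\ref{int-nec}.
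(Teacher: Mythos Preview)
Your overall strategy matches the paper's: reduce to $N(\kk_+,\xx_+)$, split into $\HH$- and $\VV$-parts, reuse the shear computation from Theorem~\ref{integ} for the $\HH$-part, and handle the $\VV$-part using the split-adjoint hypothesis. The $\HH$-part is exactly as you describe.

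For the $\VV$-part, however, the paper takes a much shorter route than your proposed expansion via $A^{-1}$. One simply takes the inner products $g(N(\kk_+,\xx_+),\kk_\pm)$ directly. In each of the two terms of \Ref{Nijen} carrying a $J$ on a bracket, only the $\VV$-component of that bracket contributes (since $\kk_\pm\in\VV$ and $\VV\perp\HH$), and self-adjointness of $J\big|_\VV$ lets one shift $J$ across: $g(J[\cdot,\cdot],\kk_\pm)=g([\cdot,\cdot],J\kk_\pm)=\pm g([\cdot,\cdot],\kk_\mp)$. This immediately turns $g(N(\kk_+,\xx_+),\kk_+)$ into the left side of i) with $\xx=\xx_+$, and $g(N(\kk_+,\xx_+),\kk_-)$ into (minus) the left side of i) with $\xx=\xx_-$. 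No matrix inversion is needed; your $A^{-1}$ bookkeeping would eventually arrive at the same place but is unnecessary.

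One small correction to your phrasing: condition i) at $\xx=\xx_-$ is \emph{not} redundant with i) at $\xx=\xx_+$; the former encodes $g(N(\kk_+,\xx_+),\kk_-)=0$ and the latter $g(N(\kk_+,\xx_+),\kk_+)=0$, and both are needed for the $\VV$-part to vanish. This does not affect the argument, since the theorem requires i) for all $\xx\in\Gamma(\HH)$, but your ``nothing new is obtained from $\xx_-$'' claim is not accurate as stated.
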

\begin{proof}
Recall from Theorem~\ref{integ} that it is enough to analyze the vanishing of
$N(\kk_+,\xx_+)=[\kk_-,   J\xx_+] - J[\kk_+,  J\xx_+] - [\kk_+, \xx_+] - J[\kk_-,  \xx_+] $,
where $\xx_+$, $\xx_-\!\!=\!\!J\xx_+$ is a local oriented orthonormal frame for $\HH$. Taking the inner product of
this expression with $\xx_\pm$ has been carried out in Theorem \ref{integ}, and
led to the shear-condition. This remains unchanged. Condition i) is obtained by first
taking the inner product of $N(\kk_+,\xx_+)$ with $\kk_\pm$, and then employing the
self-adjointness of $J\big|_\VV$, to shift $J$ in the middle two terms in $N(\kk_+,\xx_+)$
from the Lie bracket to the other vector field in the metric expression. Note finally that condition i) is tensorial on $\HH$.
\end{proof}
\begin{remark}\lb{nonttrd}
$J\big|_\VV$ is self-adjoint if and only if $\kk_{\pm}$ have lengths of
opposite signs. This makes Theorem \ref{thm:KerrNUT}
significant in two important cases which appear in our examples:
if both $\kk_{\pm}$ are null, or if they have
nonzero lengths of opposite signs.
Flaherty~\cite{flaherty}, already mentioned in the introduction,
has given a Newman-Penrose version of Theorem~\ref{thm:KerrNUT}
for a special case of the first case, namely if
$\kk_\pm$ are part of a null tetrad. Examples of the first case
appear in Sections~\ref{sec:planewave} and \ref{Kerr}, while those
of the second case are in Sections~\ref{skr} and \ref{sec:shrfl}, in
which $\kk_+$, $\kk_-$ are additionally orthogonal.
\end{remark}
In the following corollary $M$, $g$, $\kk_\pm$, $\VV$, $\HH$ are as in subsection~\ref{defJ}
with \Ref{nnsing}, \Ref{space} holding,
and we only list additional assumptions.
\begin{cor}
Let $\kk_\pm$ be null shear-free 
vector fields and $f_1$, $f_2$ nowhere
vanishing smooth functions on $M$.
Assume 
$J:=\JJ$ is an admissible complex structure.
Then the admissible almost complex structure $\tilde{J}:=J_{g,f_1\kk_+,f_2\kk_-}$
is integrable if and only if $\n(f_1/f_2)\in\Gamma(\VV)$.
\end{cor}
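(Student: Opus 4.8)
The plan is to apply Theorem~\ref{thm:KerrNUT} to $\tilde J$. Since $\mathrm{span}(f_1\kk_+,f_2\kk_-)=\mathrm{span}(\kk_+,\kk_-)=\VV$, the decomposition $TM=\VV\oplus\HH$ is unchanged, so $\tilde J$ is again admissible; and as $f_1\kk_+$ and $f_2\kk_-$ are again null, $\tilde J$ is split-adjoint by Remark~\ref{nonttrd}. Hence $\tilde J$ is integrable precisely when conditions i) and ii) of Theorem~\ref{thm:KerrNUT} hold for the pair $f_1\kk_+,f_2\kk_-$. We will also use that the hypothesis that $J=\JJ$ is an admissible complex structure means, by Definition~\ref{J-adm} together with \Ref{alter}, that $g([\kk_\pm,\cdot\,],\kk_\pm)=0$ and $g([\kk_\pm,\cdot\,],\kk_\mp)=0$ on $\HH$; these identities will do the main work.

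Condition ii) for $\tilde J$ holds automatically. Indeed, for $v\in\HH$ one has $\n_v(f_1\kk_+)=(df_1(v))\kk_+ + f_1\n_v\kk_+$, and projecting to $\HH$ kills the first term since $\kk_+\in\Gamma(\VV)$; thus $\pi\circ\n(f_1\kk_+)\big|_\HH=f_1\,(\pi\circ\n\kk_+\big|_\HH)$, so $\n^o(f_1\kk_+)=f_1\n^o\kk_+=0$ because $\kk_+$ is shear-free, and likewise $\n^o(f_2\kk_-)=f_2\n^o\kk_-=0$. Hence condition ii) for $\tilde J$ reads $\tilde J(0)=0$.

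The crux is condition i). Expanding its four bracket terms for the pair $f_1\kk_+,f_2\kk_-$ via $[hX,Y]=h[X,Y]-(Yh)X$, in each term the ``$h[X,Y]$'' contribution is a scalar multiple of one of $g([\kk_\pm,\cdot\,],\kk_\pm)$ or $g([\kk_\pm,\cdot\,],\kk_\mp)$ evaluated at a vector of $\HH$ (namely $\xx$ or $\tilde J\xx$), and so vanishes by the identities recalled above (the two terms involving $\xx$ rather than $\tilde J\xx$ also carry a factor $g(\kk_\pm,\kk_\pm)=0$). What survives is the derivative‑of‑$h$ part of the first two terms only, whose signed sum equals $g(\kk_+,\kk_-)\big(f_2\,\tilde J\xx(f_1)-f_1\,\tilde J\xx(f_2)\big)=g(\kk_+,\kk_-)\,f_2^{2}\,\tilde J\xx(f_1/f_2)$. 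Thus condition i) for $\tilde J$ amounts to $g(\kk_+,\kk_-)\,f_2^{2}\,\tilde J\xx(f_1/f_2)=0$ for all $\xx\in\Gamma(\HH)$. Now $g(\kk_+,\kk_-)$ is nowhere zero: as $\kk_\pm$ are null, the matrix $A$ of \Ref{nnsing1} has zero diagonal and off‑diagonal entry $g(\kk_+,\kk_-)$, whence $\det A=-g(\kk_+,\kk_-)^2$, which is nonzero by Remark~\ref{re-phrs}. Since $f_2$ is also nowhere zero and $\tilde J$ restricts to an automorphism of $\HH$, the displayed requirement is equivalent to $d(f_1/f_2)$ vanishing on $\HH$, i.e. (as $\VV=\HH^\perp$) to $\n(f_1/f_2)\in\Gamma(\VV)$; together with the automatic condition ii) this proves the corollary. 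The only delicate point is the term‑by‑term expansion of condition i); note in particular that the orientation used to define $\tilde J\big|_\HH$ is immaterial here, since changing it merely multiplies $\tilde J\big|_\HH$ by $-1$, which does not affect an equation whose right‑hand side is zero.
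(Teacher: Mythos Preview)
Your proof is correct and follows essentially the same approach as the paper: both apply Theorem~\ref{thm:KerrNUT} to the split-adjoint $\tilde J$, dispatch condition~ii) via $\n^o(f_i\kk_\pm)=f_i\n^o\kk_\pm=0$, and reduce condition~i) by expanding the brackets and invoking the relations~\Ref{alter} for $\kk_\pm$, leaving only the derivative terms proportional to $g(\kk_+,\kk_-)$. Your write-up is in fact slightly more explicit than the paper's in two places: you justify $g(\kk_+,\kk_-)\neq 0$ via~\Ref{nnsing1}, and you keep the $\tilde J\xx$ argument and then pass to arbitrary $\xx\in\HH$ using that $\tilde J|_\HH$ is an automorphism, whereas the paper tacitly makes this substitution from the outset.
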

We omit the proof. Note that $\tilde{J}$ is indeed an admissible almost complex structure,
since replacing $\kk_\pm$ by $f_1\kk_+$, $f_2\kk_-$ does not alter  $\VV$, $\HH$,
while it changes $G$ of \Ref{G} by the nowhere vanishing multiple $(f_1f_2)^2$.

\subsection{Commuting $\kk_\pm$}\lb{holom}
Some further properties of the vector fields $\kk_+$, $J\kk_+$ follow in
the presence of integrability.
\begin{prop}\lb{holo}
Let $J=\JJ$ be an admissible complex structure.
Then $\kk_+$, $\kk_-$ are holomorphic vector fields with respect to $J$
if and only if they commute and are shear-free.
\end{prop}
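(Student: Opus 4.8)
The statement to prove is: for an admissible *complex* structure $J=\JJ$, the vector fields $\kk_+$, $\kk_-$ are holomorphic with respect to $J$ if and only if they commute and are shear-free. Recall that a vector field $Z$ is holomorphic with respect to an integrable $J$ precisely when $\mathcal{L}_Z J = 0$, equivalently when $[Z, Jw] = J[Z,w]$ for every vector field $w$. So the plan is to test this condition frame by frame, using the standard frame $\{\kk_+, \kk_-, \xx_+, \xx_-\}$ with $\xx_- = J\xx_+$ that was already employed in the proof of Theorem~\ref{integ}, and to exploit the integrability hypothesis $N\equiv 0$ throughout.

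First I would observe that $\mathcal L_{\kk_+} J = 0$ is equivalent to the vanishing of $[\kk_+, Jw] - J[\kk_+, w]$ for $w$ ranging over the frame. For $w = \kk_+$ this is automatic ($J\kk_+ = \kk_-$ and we are comparing $[\kk_+,\kk_-]$ with $J[\kk_+,\kk_+] = 0$, so the condition here just reads $[\kk_+,\kk_-]=0$ — wait, more carefully, one must pair against frame covectors and keep track). Better: I would compute the four quantities $g(\mathcal L_{\kk_+}J(w), u)$ for $w,u$ in the frame and rewrite each via Lie brackets. The key algebraic input is that, since $J$ is integrable, $N(\kk_+,\xx_+) = 0$; this relation, together with the invariant consequences derived in Theorem~\ref{integ} (namely $\ii_{\kk_+} N = 2(\n^o\kk_+ - \n^o J\kk_+\circ J)$ on $\HH$ and the bracket conditions \Ref{alter}), lets one convert shear-type expressions into bracket-type expressions and vice versa. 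Concretely, $\mathcal L_{\kk_+} J$ restricted to $\HH$ (projected to $\HH$) encodes exactly the failure of $J$ to commute with $\n\kk_+\big|_\HH$ up to symmetric/antisymmetric pieces; the symmetric trace-free piece is governed by $\n^o\kk_+$, the antisymmetric piece by the twist. I would show that, modulo integrability, the $\HH\to\HH$ part of $\mathcal L_{\kk_+}J$ vanishes iff $\kk_+$ is shear-free, and the $\VV\leftrightarrow\HH$ mixing parts vanish iff the brackets $[\kk_\pm,\xx]$ have no "wrong" components — which, granted \Ref{alter}, reduces precisely to $[\kk_+,\kk_-] = 0$ modulo $\HH$, and then the pure-$\HH$ component of $[\kk_+,\kk_-]$ is controlled by a further bracket identity.

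The cleanest route is probably: (1) show holomorphicity of $\kk_+$ forces $\n^o\kk_+ = 0$ by examining $g((\mathcal L_{\kk_+}J)\xx_+, \xx_+)$ and $g((\mathcal L_{\kk_+}J)\xx_+, \xx_-)$ and comparing with the shear coefficients \Ref{eqn:shear2}; by integrability and Remark~\ref{doubleJ}-type symmetry this also yields $\n^o\kk_- = 0$. (2) With both shears zero, show $[\kk_+,\kk_-]$ has vanishing $\VV$-component from \Ref{alter} applied with both vector fields (using \Ref{nnsing1}), and vanishing $\HH$-component from $g((\mathcal L_{\kk_+}J)\kk_+, \xx_\pm) = 0$, giving $[\kk_+,\kk_-]=0$. (3) Conversely, assuming $[\kk_+,\kk_-]=0$ and both shear-free, run the computation backwards: the $\VV$-parts of $\mathcal L_{\kk_\pm}J$ vanish because the relevant brackets with $\xx\in\Gamma(\HH)$ have controlled $\VV$-components (here integrability, i.e. \Ref{alter} or the conclusion of Proposition~\ref{int-nec}, is used to show $[\kk_\pm,\HH]\subset\HH$ is available, or at least that the obstructions cancel), and the $\HH$-parts vanish because shear-freeness kills the symmetric obstruction while the twist piece is intrinsic to $J$ and drops out of $\mathcal L_{\kk_+}J$.

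**Main obstacle.** The delicate point is step (2)/(3): showing that holomorphicity is equivalent to the *full* bracket $[\kk_+,\kk_-]$ vanishing, not merely its $\VV$- or $\HH$-component, and correctly using integrability to handle the mixed terms $[\kk_\pm, \xx]$ for $\xx\in\Gamma(\HH)$ — in particular one must be careful that being holomorphic does *not* a priori give condition \Ref{Nij}i), so one cannot assume $[\kk_\pm,\Gamma(\HH)]\subset\Gamma(\HH)$ for free; rather, one deduces just enough about the $\VV$-components of those brackets from integrability ($N=0$) via Proposition~\ref{int-nec}-style bookkeeping. Keeping straight which components of which brackets are known to vanish, and not circularly invoking i) of Theorem~\ref{integ}, is where the real care is needed; the rest is bilinear algebra in the fixed frame.
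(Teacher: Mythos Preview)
Your plan---test $\mathcal{L}_{\kk_\pm}J$ against the frame $\{\kk_+,\kk_-,\xx_+,\xx_-\}$, read off $[\kk_+,\kk_-]=0$ from the $\VV$-slots and shear-freeness from the $\HH$-slots---is exactly the paper's route. The paper even packages the $\HH$-computation via the identity $N(\kk_+,\xx_+)=(\mathbf{L}_{\kk_-}J)\xx_+ + (\mathbf{L}_{\kk_+}J)\xx_-$ and then reuses the shear calculation from Theorem~\ref{integ} on each summand separately.

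Your ``main obstacle,'' however, is a phantom, caused by a misreading of the hypothesis. By Definition~\ref{J-adm}, an \emph{admissible complex structure} is not merely an integrable admissible almost complex structure: it is one for which the relations \Ref{Nij} hold---\emph{both} i) and ii). So $[\kk_\pm,\Gamma(\HH)]\subset\Gamma(\HH)$ is part of the standing assumption and may be invoked freely in both directions; there is no need for a Proposition~\ref{int-nec}-style extraction from $N=0$. With \Ref{Nij}i) in hand, for $a\in\Gamma(\HH)$ both $[\kk_\pm,Ja]$ and $J[\kk_\pm,a]$ lie in $\HH$, so the $\VV$-component of $(\mathcal{L}_{\kk_\pm}J)a$ vanishes automatically, and only the $\HH$-component (governed by the shear coefficients) remains. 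A smaller slip: in your step~(2), the $\VV$-component of $[\kk_+,\kk_-]$ does not come from \Ref{alter} (those conditions concern brackets $[\kk_\pm,\xx]$ with $\xx\in\Gamma(\HH)$); it comes directly from pairing $(\mathcal{L}_{\kk_+}J)\kk_+=[\kk_+,\kk_-]$ with $\kk_\pm$.
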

\begin{proof}
We examine the Lie derivative formula $(\mathbf{L}_{\kk_\pm}J)a=[\kk_\pm,Ja]-J[\kk_\pm,a]$,
for $a$ taking values in the frame used in the
proof of Theorem \ref{integ}. For $a=\kk_+$ or $a=J\kk_+$, its vanishing will occur if and only if $\kk_\pm$ commute. For $a\in\Gamma(\HH)$, note first that $g((\mathbf{L}_{\kk_\pm}J)a,\kk_\pm)=0$
by \Ref{Nij}i), so \Ref{nnsing1} implies that the projection of $(\mathbf{L}_{\kk_\pm}J)a$
to $\VV$ vanishes. Next, one sees that
$N(\kk_+,\xx_+)=(\mathbf{L}_{\kk_-}J)\xx_++(\mathbf{L}_{\kk_+}J)\xx_-$.
Thus repeating the calculation after \Ref{Nijen} for each summand separately,
we have, for example, $g((\mathbf{L}_{\kk_+}J)\xx_-,\xx_\pm)=0$ if and only if
the shear coefficients of $\kk_+$ vanish, or equivalently the projection
of $(\mathbf{L}_{\kk_+}J)\xx_-$ to $\HH$ vanishes. Since
$J(\mathbf{L}_{\kk_+}J)\xx_-=(\mathbf{L}_{\kk_+}J)\xx_+$, this is also equivalent
to the statement that the restriction of $\mathbf{L}_{\kk_+}J$
to $\HH$ vanishes. Similar considerations lead to the vanishing of the
restriction of $\mathbf{L}_{\kk_-}J$ to $\HH$.
\end{proof}
Note that in our later examples, at times $\kk_+$ and $J\kk_+$ will not commute,
and in one case they will not be shear-free.




\section{K\"ahler metrics induced by admissible metrics}
\label{sec:prop0}

This section gives the construction of K\"ahler metrics on
semi-Riemannian $4$-manifolds we call admissible, which, in particular,
possess an admissible complex structure.

\subsection{Admissible manifolds and metrics}
We first give the definition of manifold and metric admissibility.
Recall in particular that a complex structure is admissible if
\Ref{nnsing}, \Ref{space} and \Ref{Nij} hold. In this section and
in others we will switch notations to $\kk_+\!\!:=\!\kk$, $\kk_-\!\!:=\!\tT$,
so that $\VV:=\mathrm{span}(\kk,\tT)$.
\begin{defn}\lb{adms}
An oriented semi-Riemannian manifold $(M,g)$ is called {\em admissible} if
it is equipped with two vector fields $\kk$, $\tT$
for which
\[
\begin{aligned}
i]&\text{\ $J=J_{g,\kk,\tT}$ is an admissible complex structure,}\\
ii]&\text{\ $\tT=\ell\n\ta$, for $C^\infty$ functions $\ta$, $\ell$,}\\
iii]&\text{\  
$\n(g(\kk,\tT))$, $\n(g(\kk,\kk))\in\Gamma(\VV)$.}
\end{aligned}
\]
The metric $g$ is also called admissible.
\end{defn}
We will see in later sections many examples of admissible manifolds.

\subsection{The K\"ahler metrics}
We first give certain families of $\J$-invariant symplectic forms
on admissible manifolds. Some of these forms, on Lorentzian manifolds
with $\kk$ null and geodesic, were first considered in \cite{AA}
in analogy with well-known contact forms on $3$-manifolds.

\vskip 6pt
Let $(M,g)$ be an admissible semi-Riemannian $4$-manifold (Definition \ref{adms}),
with vector fields $\kk$, $\tT$.
We consider $2$-forms on $M$ of the form
\be\lb{symp-expl}
\omega:=d(f(\ta)\kk^\flat),
\end{equation}
where $\kk^\flat=g(\kk,\cdot)$, and $f$ is a smooth function defined on the range of $\ta$,
for $\ta$ as in Definition~\ref{adms}ii].
Among the different choices of the ``parameter function" $f$, our interest will lie
mainly in the case where $f$ is affine in $\ta$, or else is $e^\ta$.
As part of the following theorem, we will shortly show that the forms $\om$
are $\J$-invariant. It is thus natural to consider the tensor
\be\lb{kler}
\gK:=\om(\cdot, \J\cdot)=d(f(\ta)\kk^\flat)(\cdot, \J\cdot)
\end{equation}
as a candidate for a K\"ahler metric.
With $G$ as in \Ref{G}, we have
\begin{thm}\lb{ad-gen}
Let $(M,g)$ be an admissible $4$-manifold.
Then $\gK$, given in \Ref{kler}, is a K\"ahler metric on the
region of $M$ where
\be\lb{ineq}
\text{$f\ii<0$ and $f'G/\ell-f\,d\kk^\flat(\kk,\tT)<0$.}
\end{equation}
\end{thm}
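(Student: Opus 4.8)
The plan is to verify three things on the stated region: that $\om$ is closed, that $\gK=\om(\cdot,J\cdot)$ is symmetric, and that $\gK$ is positive definite; together with integrability of $J$ (which holds since $M$ is admissible, so $J$ is an admissible complex structure), positive-definiteness and $J$-invariance of $\om$ give that $\gK$ is K\"ahler by the standard fact that a closed nondegenerate $J$-invariant $2$-form compatible with an integrable $J$ defines a K\"ahler metric. Closedness of $\om$ is immediate since $\om=d(f(\ta)\kb)$ is exact. The real work is to compute $\om$, and hence $\gK$, explicitly in the adapted frame $\{\kk,\tT,\xx_+,\xx_-\}$ where $\{\xx_+,\xx_-=J\xx_+\}$ is an oriented orthonormal frame for $g\big|_\HH$, and then read off when the resulting symmetric bilinear form is positive definite.

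First I would expand $\om=f'(\ta)\,d\ta\wedge\kb+f(\ta)\,d\kb$. Using admissibility condition ii], $\tT=\ell\n\ta$, so $d\ta=\tT^\flat/\ell$ on the span where $\ell\neq 0$; hence the first term is $(f'/\ell)\,\tT^\flat\wedge\kb$. For $d\kb$, I would use the Levi-Civita connection to write $d\kb(a,b)=g(\n_a\kk,b)-g(\n_b\kk,a)$ and evaluate on all pairs from the frame. The point of the admissibility hypotheses is exactly to control these: conditions i] and iii] force the ``mixed'' components $d\kb(\kk,\xx_\pm)$ and $d\kb(\tT,\xx_\pm)$ to vanish (the relations \Ref{alter}, equivalently \Ref{Nij}i), say $g([\kk_\pm,\cdot],\kk_\pm)=0$ and $g([\kk_\pm,\cdot],\kk_\mp)=0$ on $\HH$, together with $\n(g(\kk,\kk)),\n(g(\kk,\tT))\in\Gamma(\VV)$, kill precisely these terms). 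What survives is a $\VV$-block and an $\HH$-block: on $\VV$, $d\kb(\kk,\tT)$ is the only entry, and on $\HH$, $d\kb(\xx_+,\xx_-)=g(\kk,[\xx_+,\xx_-])=-\ii$ by \Ref{eqn:twist0} (with the sign convention of Remark~\ref{iota}). So $\om$ is block-diagonal with respect to $\VV\oplus\HH$: on $\HH$ it is $-f(\ta)\,\ii\,\xx_+^\flat\wedge\xx_-^\flat$, and on $\VV$ it is $\big[(f'/\ell)(\tT^\flat\wedge\kb)(\kk,\tT)+f\,d\kb(\kk,\tT)\big]$ times $\kb\wedge\tT^\flat$ suitably normalized; here $(\tT^\flat\wedge\kb)(\kk,\tT)=g(\tT,\kk)g(\kk,\tT)-g(\tT,\tT)g(\kk,\kk)=-G$.

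Next I would compute $\gK=\om(\cdot,J\cdot)$ block by block. On $\HH$: since $J\xx_+=\xx_-$, $J\xx_-=-\xx_+$, one gets $\gK(\xx_+,\xx_+)=\om(\xx_+,\xx_-)=-f\ii$ and $\gK(\xx_-,\xx_-)=-f\ii$ as well, with $\gK(\xx_+,\xx_-)=0$; so the $\HH$-block of $\gK$ is $(-f\ii)\,(\mathrm{Id})$, symmetric, and positive definite iff $f\ii<0$ — the first inequality. On $\VV$: using $J\kk=\tT$, $J\tT=-\kk$ together with the value of $\om$ on $\VV$, I would find $\gK(\kk,\kk)=\gK(\tT,\tT)=-f'G/\ell+f\,d\kb(\kk,\tT)$ up to an overall factor I would pin down by the normalization $\kb\wedge\tT^\flat(\kk,\tT)=-G$, and $\gK(\kk,\tT)=0$; hence the $\VV$-block is a multiple of the identity too, positive definite iff $f'G/\ell-f\,d\kb(\kk,\tT)<0$ — the second inequality. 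Symmetry of $\gK$ (i.e. $J$-invariance of $\om$) then follows because both blocks are scalar multiples of the identity in the chosen orthonormal-type frames, and I should also remark that the two diagonal scalars need not agree, so $\gK$ is genuinely a new metric, not conformal to $g$.

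The main obstacle I anticipate is the careful bookkeeping in the $\VV$-block: getting the factor $f'G/\ell$ right requires correctly handling that $d\ta=\tT^\flat/\ell$ and that $(\tT^\flat\wedge\kb)$ paired against $(\kk,\tT)$ produces $-G=-\det A$, and it requires showing $d\kb(\kk,\xx_\pm)=d\kb(\tT,\xx_\pm)=0$ cleanly from the admissibility conditions rather than by brute force — this is where conditions ii] and iii], via the argument already given in Remark~\ref{cond-i} for \Ref{near-grad}, do the job, since $\tT=\ell\n\ta$ makes $d\kb(\tT,\xx)$ computable through the Hessian of $\ta$ and $d\kb(\kk,\xx)$ through $\n(g(\kk,\kk))\in\Gamma(\VV)$. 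Once the two scalars are identified, positive definiteness of the block-diagonal $\gK$ is exactly the conjunction of the two displayed inequalities, and nondegeneracy of $\om$ on the region follows a fortiori, completing the verification that $\gK$ is K\"ahler there.
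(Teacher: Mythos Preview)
Your proposal is correct and follows essentially the same route as the paper's proof: establish that $\om$ is block-diagonal with respect to $\VV\oplus\HH$ by showing the mixed terms $\om(\kk,\xx)$, $\om(\tT,\xx)$ vanish (using \Ref{Nij}i) together with admissibility~iii]), then compute $\om(\xx_+,\xx_-)=-f\ii$ and $\om(\kk,\tT)=-f'G/\ell+f\,d\kb(\kk,\tT)$, from which $J$-invariance is automatic on each two-dimensional block and tameness reduces to the two stated inequalities. Two minor slips to clean up when you write it out: the intermediate equality $d\kb(\xx_+,\xx_-)=g(\kk,[\xx_+,\xx_-])$ has the wrong sign (it should be $-g(\kk,[\xx_+,\xx_-])$, though your final value $-\ii$ is correct), and the appeal to Remark~\ref{cond-i} in your last paragraph is unnecessary---the relations \Ref{Nij}i) are already part of admissibility~i], so you can use them directly rather than rederiving them.
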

\begin{proof}
We first check that $\gK$ is $\J$-invariant. It is enough to show
the same for $\om$, and we do so by evaluating it on
$\kk$, $\tT$, $\xx$, where $\xx\in\Gamma(\HH)$.
As there is clearly nothing to check
for a pair of the form $a$, $\J a$, we need not consider the
case where both vectors lie in either of the $J$-invariant distributions
$\VV$, $\HH$. For the case
where one  vector field is in $\VV$ and the other in $\HH$,
we calculate
$\om(\tT,\xx)=d(f\kk^\flat)(\tT,\xx)=fd\kk^\flat(\tT,\xx)
=f[g(\n_\tT\kk,\xx)-g(\n_{\xx}\kk,\tT)]
=-f[g(\kk,\n_\tT\xx)-g(\kk,\n_{\xx}\tT)]=-fg(\kk,[\tT,\xx])=0$,
where here $\n(g(\kk,\tT))\in\Gamma(\VV)$ and \Ref{Nij}i) were applied.
An analogous calculation shows $\om(\kk,\xx)=g(\kk,[\kk,\xx])=0$
using $\n(g(\kk,\kk))\in\Gamma(\VV)$.
As both calculations yield zero, $\om$ and hence $\gK$ are $\J$-invariant.

Now $\om$ is clearly exact, and $\J$ is integrable, 
so it remains to compute the region where $\J$ is $\om$-tame, yielding positive
definiteness of $\gK$. It is clearly enough to check when
\be\lb{pos}
\gK(a,a)=\om(a,Ja)>0
\end{equation}
for all vector fields $a$ of a $\gK$-orthogonal frame. We choose a
frame $\kk$, $\tT$, $\xx$, $\yy=\J\xx$ with the latter two (possibly local)
vector fields forming an oriented $g$-orthonormal frame for $\HH$. We need only
compute $\om(\xx,\yy)$ and $\om(\kk,\tT)$.
The first is
$\om(\xx,\yy)=fd\kk^\flat(\xx,\yy)=f(g(\n_{\!\xx}\kk,\yy)-g(\n_{\!\yy}\kk,\xx))=-f\ii$,
by equation \Ref{eqn:twist0}.
For the second, we write
\be\lb{decom}
\om=f'd\ta\we \kk^\flat+fd\kk^\flat.
\end{equation}
Then
$f'(d\ta\we\kk^\flat)(\kk,\tT)=-f'[g(\kk,\tT)^2-g(\tT,\tT)g(\kk,\kk)]/\ell=-f'G/\ell$,
so that $\om(\kk,\tT)=-f'G/\ell+fd\kk^\flat(\kk,\tT)$. These expressions yield
the inequalities in the theorem defining the region where $\gK$ is K\"ahler.
\end{proof}
We will often describe a K\"ahler metric provided by the above theorem
as \emph{induced by an admissible metric}.

\vspace{.05in}
With $\al$ below given in \Ref{pr-gd}, we have
\begin{remark}\lb{LIK}
The second inequality in \Ref{ineq}, for the region where $\gK$ in \Ref{kler} represents
a K\"ahler metric, becomes
\[
\begin{aligned}
&\text{$f'G/\ell<0$ if $\kk$ is geodesic of constant length, or}\\[3pt]
&\text{$f'G/\ell-f\al g(\kk,\tT)<0$ if $\kk$ is null and strictly pre-geodesic, or}\\[3pt]
&\text{$f'G/\ell+fd_\tT(g(\kk,\kk))<0$ if $\kk$ is Killing.}
\end{aligned}
\]
The first of these inequalities
can be simplified to $f'/\ell>0$ in the Lorentzian case,
using \Ref{G}.

In fact, 
$d\kk^\flat(\kk,\tT)=g(\n_\kk\kk,\tT)-g(\n_\tT\kk,\kk)=g(\n_\kk\kk,\tT)-d_\tT(g(\kk,\kk))/2$
vanishes when $\kk$ is geodesic with constant length, equals $\al g(\kk,\tT)$ when $\kk$ is a null
and strictly pre-geodesic, and equals $-g(\kk,\n_\tT\kk)-d_\tT(g(\kk,\kk))/2=-d_\tT(g(\kk,\kk))$
if $\kk$ is Killing.
\end{remark}
\begin{remark}\lb{doubleKJ}
Let $J_+:=J$, and denote by $J_-$ the almost complex structure
of Remark~\ref{doubleJ}, respecting the opposite orientation.
It is easy to show that if $\kk_\pm$ are shear-free,
{\em $\gK$ is ambihermitian}, in the sense of \cite{acg}.
In some cases, $\gK$ is also {\em ambiK\"ahler}
(e.g. 
when $\gK$ is an SKR metric as in Section~\ref{skr}).
\end{remark}

\subsection{$\gK$ values on a frame}
\label{sec:inher}

We record a number of basic facts about $\gK$ inferred from Theorem~\ref{ad-gen}.
\begin{lemma}\lb{basic}
For a K\"ahler metric induced from an admissible semi-Riemannian metric,
\[
\gK(\VV,\HH)=0,\qquad \gK(\kk,\tT)=0,\qquad \gK(\kk,\kk)=\gK(\tT,\tT),\qquad
\gK\big|_\HH=-f\ii g\big|_\HH.
\]
\end{lemma}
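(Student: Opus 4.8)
The plan is to read off all four identities directly from the computations already performed in the proof of Theorem~\ref{ad-gen}, since that proof effectively evaluated $\om(\cdot,\J\cdot)$ on the frame $\{\kk,\tT,\xx,\yy\}$ with $\xx,\yy=\J\xx$ an oriented $g$-orthonormal frame for $\HH$. Specifically, recall the decomposition $\gK=\om(\cdot,\J\cdot)$; since $\J\VV=\VV$ and $\J\HH=\HH$, the mixed evaluations $\gK(\kk,\xx)$, $\gK(\tT,\xx)$ etc.\ involve $\om$ applied to one vector in $\VV$ and one in $\HH$, and these were shown to vanish in the course of proving $\J$-invariance of $\om$ (the computations $\om(\tT,\xx)=0$ and $\om(\kk,\xx)=0$, together with $\om(\tT,\J\xx)=\om(\tT,\yy)$ and $\om(\kk,\yy)$ of the same form). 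This gives $\gK(\VV,\HH)=0$.

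Next, for $\gK(\kk,\tT)$: by definition $\gK(\kk,\tT)=\om(\kk,\J\tT)$. Here $\J\tT=\J\kk_-=-\kk_+=-\kk$, so $\gK(\kk,\tT)=-\om(\kk,\kk)=0$ by antisymmetry of $\om$. For the equal-norm statement, $\gK(\kk,\kk)=\om(\kk,\J\kk)=\om(\kk,\tT)$, while $\gK(\tT,\tT)=\om(\tT,\J\tT)=\om(\tT,-\kk)=\om(\kk,\tT)$; hence $\gK(\kk,\kk)=\gK(\tT,\tT)$, both equal to $\om(\kk,\tT)=-f'G/\ell+fd\kk^\flat(\kk,\tT)$ as computed in the theorem's proof. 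Finally, for the restriction to $\HH$: for $\xx\in\Gamma(\HH)$ of unit length with $\yy=\J\xx$, the theorem's proof gives $\gK(\xx,\xx)=\om(\xx,\J\xx)=\om(\xx,\yy)=-f\ii$, and similarly $\gK(\yy,\yy)=\om(\yy,-\xx)=\om(\xx,\yy)=-f\ii$, while $\gK(\xx,\yy)=\om(\xx,\J\yy)=\om(\xx,-\xx)=0$; since $g\big|_\HH$ has matrix the identity in the frame $\{\xx,\yy\}$, this says precisely $\gK\big|_\HH=-f\ii\,g\big|_\HH$, at least on an orthonormal frame, hence identically by bilinearity.

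The only point requiring a word of care — and the closest thing to an obstacle — is that all these evaluations were carried out on a particular adapted frame, so one should note that $\gK$ and $g$ are tensors and the identities, being frame-independent statements (orthogonality of the distributions $\VV$, $\HH$; orthogonality of $\kk$, $\tT$; the proportionality on $\HH$ with scalar factor $-f\ii$), follow on all vector fields by bilinearity once verified on such a frame. No new computation beyond what appears in the proof of Theorem~\ref{ad-gen} is needed; the lemma is essentially a bookkeeping corollary, so the proof can simply cite those formulas.
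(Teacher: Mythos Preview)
Your proposal is correct and follows essentially the same approach as the paper's own proof: both simply read off the four identities from the frame computations already carried out in the proof of Theorem~\ref{ad-gen}, using that $\om$ is a $2$-form for the second and third identities and the explicit value $\om(\xx,\yy)=-f\ii$ for the fourth. Your write-up is, if anything, more explicit than the paper's (which dispatches the middle two identities with ``obvious as $\om$ is a $2$-form''), but there is no substantive difference in method.
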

\begin{proof}
The first equality was shown in the proof of Theorem~\ref{ad-gen}.
The second and third are obvious as $\om$ is a $2$-form. The fourth follows
by comparing, with the help of \Ref{decom}, $\gK\big|_\HH(a,b)=fd\kk^\flat(a,Jb)=f(g(\n_a\kk,Jb)-g(\n_{Jb}\kk,a))$
and $-f\ii g\big|_\HH(a,b)$ using \Ref{eqn:twist0}, where $a$, $b$ are taken
from an oriented orthonormal frame $\xx$, $\yy\!\!=\!\!J\xx$ for $\HH$.
\end{proof}
Note that $\gK(\kk,\kk)=\gK(\tT,\tT)=fd\kk^\flat(\kk,\tT)-f'G/\ell$ (see
Theorem~\ref{ad-gen}), which reduces to minus one of the expressions in
the inequalities of Remark~\ref{LIK}, in the case where $\kk$ satisfies one
of the corresponding assumptions given there. This formula will also be used
later in the easily verifiable form
\be\lb{alt}
\begin{aligned}
&\gK(\kk,\kk)=\gK(\tT,\tT)=-f(g([\kk,\tT],\kk)+d_\tT(g(\kk,\kk))-f'G/\ell,\\
&\text{whenever $g(\kk,\tT)$ is constant.}
\end{aligned}
\end{equation}

\subsection{Additional K\"ahler metrics and functions with a gradient in $\VV$}

If $g$ is an admissible metric with distinguished {\em shear-free} vector fields $\kk$, $\tT$,
then $g$ is also admissible relative to the pairs  $a\kk$, $\tT$ and
$\kk$, $a\tT$, whenever $a$ is a function with $\n a\in\Gamma(\VV)$. This can
be shown easily by verifying i]-iii] of Definition~\ref{adms} for these pairs.
Thus $g$ induces two new families of K\"ahler metrics for each such function $a$.
The K\"ahler metrics relative to the pair $\kk$, $a\tT$ will have the same symplectic
form as the original $\gK$ but a different complex structure. Note, however,
that the function $a$ must, in fact, be functionally dependent on $\tau$:
\begin{prop}\lb{ver-grad}
On the domain of any associated K\"ahler metric in an admissible manifold,
any function $a$ with gradient in $\VV$ must be
locally a function of $\tau$ of Definition~\ref{adms}.
\end{prop}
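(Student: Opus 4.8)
The plan is to show that the differential $da$ is everywhere proportional to $d\tau$, which (at least locally, away from critical points of $\tau$, and trivially near critical points by a limiting or continuity argument) forces $a$ to be a function of $\tau$. The key input is the admissibility data: $\tT = \ell\,\n\tau$ with $\n\ell \in \Gamma(\VV)$ (Definition~\ref{adms}ii]), together with the fact that $\VV = \mathrm{span}(\kk,\tT)$ is rank two and $\HH = \VV^\perp$. Since both $\n a$ and $\n\tau$ lie in $\VV$ (the latter because $\tT = \ell\n\tau$ and $\ell$ is nowhere vanishing, as $\tT$ has no zeros), it suffices to show that they are linearly dependent at each point, i.e.\ that the $2$-form $da \wedge d\tau$ vanishes. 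Equivalently, pairing against $\VV$, one must show $g(\n a,\kk)\,g(\n\tau,\tT) - g(\n a,\tT)\,g(\n\tau,\kk)$ and the corresponding $g(\n a,\kk)\,g(\n\tau,\kk) - \ldots$ combinations vanish; really the one nontrivial scalar to kill is the ``Jacobian'' $g(\n a,\kk)\,g(\tT,\tT)/\ell - g(\n a,\tT)\,g(\kk,\tT)/\ell$, or more invariantly, $\{a,\tau\}$ computed in the $\VV$-plane.

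First I would use the integrability of $\HH$ under bracketing with $\kk_\pm$ — but note $\HH$ need \emph{not} be integrable; rather, the relevant structure is condition \Ref{Nij}i), $[\kk_\pm,\Gamma(\HH)]\subset\Gamma(\HH)$. The natural move is to bring in the complex structure $J$: since $\n a, \n\tau \in \Gamma(\VV)$ and $J$ preserves $\VV$, one has $J\n\tau \in \Gamma(\VV)$ as well, and $\{\n\tau, J\n\tau\}$ spans $\VV$ pointwise wherever $\n\tau \neq 0$ (using nondegeneracy of $g|_\VV$). So it is enough to show $g(\n a, J\n\tau) $ is a pointwise multiple of $g(\n a,\n\tau)$ — or better, to compute the $\HH$-component of $[\tT, \n a]$, or to exploit that $d(da) = 0$ and $d(\ell^{-1}\tT^\flat) = d(d\tau) = 0$. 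Concretely: for any $\xx \in \Gamma(\HH)$, we have $d_\xx a = g(\n a,\xx) = 0$ and $d_\xx \tau = g(\n\tau,\xx)= \ell^{-1}g(\tT,\xx) = 0$ since $\n a, \n\tau \in \VV \perp \HH$. Thus both $a$ and $\tau$ are constant along $\HH$. The content is then to show they are functionally dependent along $\VV$, i.e.\ on the (local, when $\HH$ fails to be integrable this is delicate) leaf directions — and here is where the hypothesis $\n\ell \in \Gamma(\VV)$ must enter, otherwise $\ell$ itself would be a counterexample.

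The cleanest route: write $\tT = \ell\,\n\tau$ and feed this into the closedness $d(\tT^\flat/\ell) = dd\tau = 0$, i.e.\ $d\tT^\flat = (d\ell/\ell)\wedge\tT^\flat$. Now apply $d$ to $da$ (which is zero) and compare. One wants an identity of the shape: for $\xx\in\Gamma(\HH)$, $g([\kk,\tT], \xx)$ or $\xx(g(\n a,\cdot))$ expressions combine, via \Ref{Nij}i) and the Leibniz rule, to give $\xx\big(g(\n a,\kk)\big)$-type terms that must vanish because $a$ is $\HH$-constant — and iterating this shows the $\HH$-derivative of the Jacobian $\{a,\tau\}$ vanishes, so the Jacobian is itself constant along $\HH$; then a separate argument along $\VV$ (using that $[\kk,\tT]$'s $\HH$-part can be controlled, or directly that $d a \wedge d\tau$ is a closed $2$-form supported on $\VV^*$ whose restriction to the $\VV$-planes is forced to vanish by a rank count, since $\VV$ is $2$-dimensional and $da, d\tau$ are two covectors annihilating $\HH$) finishes it. \emph{The main obstacle} I anticipate is precisely the non-integrability of $\HH$: one cannot simply say ``$a$ and $\tau$ are functions on the $2$-dimensional leaf space,'' because there is no leaf space. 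The argument must therefore be infinitesimal — showing $da\wedge d\tau = 0$ as a $2$-form — and the delicate point is producing enough relations from \Ref{Nij}i) and $\n\ell\in\Gamma(\VV)$ to kill the single remaining scalar component $g(\n a,J\n\tau)/(\text{normalization}) - (\text{multiple of } g(\n a,\n\tau))$, rather than appealing to any foliation. I expect this reduces, after using that $a$ and $\tau$ are both $\HH$-constant and that $d\tT^\flat \equiv 0 \pmod{\tT^\flat}$, to a short computation showing $da$ is proportional to $\tT^\flat$, hence to $d\tau$.
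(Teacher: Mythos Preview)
Your proposal circles the right target but never lands on the one short computation that settles it, and it omits the single hypothesis that makes the argument work: the nonvanishing of the twist $\ii^{\kk}$. The paper's proof is a two-line trick you almost wrote down but did not execute. Write $\n a=\alpha\kk+\beta\tT$, so $da=\alpha\kk^\flat+\beta\tT^\flat$; apply $d$ and evaluate the resulting identity $0=d\alpha\wedge\kk^\flat+\alpha\,d\kk^\flat+d\beta\wedge\tT^\flat+\beta\,d\tT^\flat$ on a pair $\xx,\yy\in\Gamma(\HH)$. Since $\kk^\flat,\tT^\flat$ annihilate $\HH$, the wedge terms drop and one is left with $0=\alpha\,d\kk^\flat(\xx,\yy)+\beta\,d\tT^\flat(\xx,\yy)=-\alpha\,\ii^{\kk}-\beta\,\ii^{\tT}$. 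Now $\ii^{\tT}=0$ because $\tT=\ell\n\tau$ and the Hessian of $\tau$ is symmetric (this is your observation $d\tT^\flat\equiv 0\pmod{\tT^\flat}$, made precise on $\HH$). Hence $\alpha\,\ii^{\kk}=0$, and since $\ii^{\kk}$ is nowhere zero on the region where $\gK$ exists (first inequality of Theorem~\ref{ad-gen}), $\alpha=0$. Thus $da=\beta\ell\,d\tau$ and $a$ is locally a function of $\tau$.

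Several things you flagged as obstacles or needed inputs are red herrings. The non-integrability of $\HH$ is irrelevant: the argument is pointwise once you evaluate on $(\xx,\yy)$, and no leaf space is invoked. Neither condition \Ref{Nij}i) nor $\n\ell\in\Gamma(\VV)$ is used; only $\n\tau\in\Gamma(\VV)$ (to kill the $(d_\xx\ell)g(\n\tau,\yy)$-type terms when computing $\ii^{\tT}$) and the nowhere vanishing of $\ii^{\kk}$ matter. Your plan to show $da\wedge d\tau=0$ is equivalent to showing $\alpha=0$, but you never isolate the scalar equation that forces this, and you never invoke $\ii^{\kk}\neq 0$, without which the statement is simply false (e.g.\ if $\kk$ were also twist-free, any $a$ with $\n a\in\Gamma(\VV)$ would pass your tests yet need not depend only on $\tau$).
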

\begin{proof}
If $\n a=\alpha \kk+\beta \tT$ then $da=\alpha \kk^\flat+\beta \tT^\flat$,
so applying the exterior derivative on both sides gives
$0=d\alpha\wedge \kk^\flat+\alpha d\kk^\flat+d\beta\wedge \tT^\flat+\beta d\tT^\flat$.
Applying this to the usual oriented orthonormal frame $\xx$, $\yy$ of $\HH$ gives
$0=\alpha d\kk^\flat(\xx,\yy)+\beta d\tT^\flat(\xx,\yy)$.
Using $d\phi(a,b)=d_a(\phi(b))-d_b(\phi(a))-\phi([a,b])$
for $\phi=\kk^\flat$ and $\phi=\tT^\flat$ along the orthogonality
of $\VV$ and $\HH$ yields
$0=-\alpha \kk^\flat([\xx,\yy])-\beta \tT^\flat([\xx,\yy])=-\alpha\iota^\kk-\beta\iota^\tT$.
But the twist function $\iota^\tT=0$ since
$\iota^\tT=g(\ell\n\ta,\n_\xx\yy-\n_\yy\xx)=-\ell(g(\n_\xx\n\ta,\yy)-g(\n_\yy\n\ta,\xx))=0$
as the Hessian is symmetric. Thus, as $\iota^\kk$ is nonzero on the domain of an
associated K\"ahler metric, $\alpha=0$. So $da=\beta\tT^\flat=\beta\ell d\ta$, hence
$a$ is locally a function of $\ta$.
\end{proof}
In particular, iii] of Definition~\ref{adms} implies that for an
admissible metric, on the domain of an induced K\"ahler metric,
$g(\kk,\tT)$ and $g(\kk,\kk)$ are always functions of $\ta$.

\section{First-order properties of the induced K\"ahler metric}

In this section we will show that some properties of
special admissible metrics are also shared by their induced
K\"ahler metric, including admissibility itself. We will also
examine other metric characteristics, such as
completeness.

\subsection{Relations between the admissible and induced metrics}
\label{sec:prop}

We will occasionally employ the notation $\tc:=\ta-c$, for a constant $c$.
\begin{prop}\lb{geo-geo}
Let $g$ be an admissible semi-Riemannian metric with $\kk$, $\tT$ geodesic vector fields
of constant length, $g(\kk,\tT)$ constant and $\ell=1$. If $\gK$ is the induced K\"ahler
metric
with $f(\ta)=\tc$, then the associated vector fields $\kk$, $\tT$ are geodesic and of constant length also with respect to $\gK$.
\end{prop}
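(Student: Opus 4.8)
The plan is to compute the Levi-Civita connection $\nK$ of $\gK$ applied to $\kk$ and $\tT$ and show that $\nK_\kk\kk = 0 = \nK_\tT\tT$, together with $\gK(\kk,\kk) = \gK(\tT,\tT)$ being constant. For the constancy of length, I would start from Lemma~\ref{basic}, which gives $\gK(\kk,\kk) = \gK(\tT,\tT)$, and then use \eqref{alt}: since $\kk$, $\tT$ are geodesic of constant length with $g(\kk,\tT)$ constant, and $\ell=1$, the term $d_\tT(g(\kk,\kk))$ vanishes, $G$ is constant, and $f = \tc$ gives $f' = 1$, so $\gK(\kk,\kk) = -f\,g([\kk,\tT],\kk) - G$. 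One must then check $g([\kk,\tT],\kk)$ is constant; since $\kk$ is geodesic of constant length, $g([\kk,\tT],\kk) = g(\n_\kk\tT - \n_\tT\kk,\kk) = -g(\tT,\n_\kk\kk) - \tfrac12 d_\tT(g(\kk,\kk)) = 0$ (using \eqref{pr-gd} with $\al=0$ and constancy of $g(\kk,\kk)$), so in fact $\gK(\kk,\kk) = -G$ is constant. Good — one obstacle removed.

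For geodesy, the key tool is the Koszul formula for $\nK$ together with the identity $\gK(a,b) = \om(a,Jb)$ and $d\om = 0$ (since $\om$ is exact). Concretely, for any vector field $b$,
\[
2\gK(\nK_\kk\kk,\, b) = 2d_\kk\,\gK(\kk,b) - d_b\,\gK(\kk,\kk) - 2\gK([\kk, b],\kk) + 2\gK([\kk,\kk],b),
\]
where the last term vanishes. The strategy is to test this against each member of the $\gK$-orthogonal frame $\{\kk,\tT,\xx,\yy=J\xx\}$ from Theorem~\ref{ad-gen}. Testing against $\kk$ is immediate: $2\gK(\nK_\kk\kk,\kk) = d_\kk\,\gK(\kk,\kk) = 0$ since $\gK(\kk,\kk)$ is constant. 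Testing against $\tT$: $2\gK(\nK_\kk\kk,\tT) = 2d_\kk\gK(\kk,\tT) - d_\tT\gK(\kk,\kk) - 2\gK([\kk,\tT],\kk)$; the first two vanish by Lemma~\ref{basic} and constancy, and $\gK([\kk,\tT],\kk)$ should be handled using that $[\kk,\tT]$ lies in $\VV$ — which follows because $\kk$, $\tT$ are shear-free (geodesic of constant length implies they satisfy \eqref{Nij}i via Remark~\ref{cond-i}), so $[\kk,\tT] \in \Gamma(\VV)$, and then $\gK([\kk,\tT],\kk)$ is a combination of $\gK(\kk,\kk)$ and $\gK(\kk,\tT)$, both of which we can differentiate... actually here one needs $[\kk,\tT]$ itself, not its derivative, so I would instead expand $[\kk,\tT] = a\kk + b\tT$ and use Lemma~\ref{basic} directly: $\gK([\kk,\tT],\kk) = a\gK(\kk,\kk)$, and separately show $a = 0$ via a metric-$g$ computation ($a$ is extracted by pairing with $\kk^\flat,\tT^\flat$ and using $A$ from \eqref{nnsing1} invertible, with $g([\kk,\tT],\kk) = 0$ shown above and $g([\kk,\tT],\tT) = 0$ shown symmetrically). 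Testing against $\xx \in \Gamma(\HH)$: $2\gK(\nK_\kk\kk,\xx) = 2d_\kk\gK(\kk,\xx) - d_\xx\gK(\kk,\kk) - 2\gK([\kk,\xx],\kk)$; the first term is zero since $\gK(\VV,\HH)=0$ and $[\kk,\kk]$... wait, $d_\kk$ of the zero function is zero; the second is zero by constancy; and $\gK([\kk,\xx],\kk) = 0$ because $[\kk,\xx] \in \Gamma(\HH)$ by \eqref{Nij}i and $\gK(\HH,\VV) = 0$. Hence $\nK_\kk\kk = 0$.

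The computation for $\tT$ is entirely parallel: $\tT$ is geodesic of constant length, $[\tT,\kk] = -[\kk,\tT] \in \Gamma(\VV)$, $\gK(\tT,\tT)$ is constant, and the same four pairings give $\nK_\tT\tT = 0$; the only asymmetry is bookkeeping of signs and which of $g([\kk,\tT],\cdot)$ vanishes, all of which were already established. I expect the main obstacle to be the $b=\tT$ case of $\nK_\kk\kk$ — specifically, pinning down that the $\VV$-component coefficients $a,b$ of $[\kk,\tT]$ that survive in $\gK([\kk,\tT],\kk)$ actually vanish — which forces one to go back to the metric $g$ and use the full strength of the hypotheses (geodesy, constant lengths, constancy of $g(\kk,\tT)$) rather than just Lemma~\ref{basic}; everything else is a short unwinding of the Koszul formula against the adapted frame using $\gK(\VV,\HH)=0$.
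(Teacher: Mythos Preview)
Your approach is essentially the paper's: establish constancy of $\gK(\kk,\kk)$ via Remark~\ref{LIK}/\eqref{alt}, then test $\nK_\kk\kk$ against the frame $\{\kk,\tT,\xx,\yy\}$ using the Koszul formula, invoking $\gK(\VV,\HH)=0$ and \eqref{Nij}i) for the horizontal tests.

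There is one error in the $b=\tT$ step. You assert $[\kk,\tT]\in\Gamma(\VV)$, citing \eqref{Nij}i); but \eqref{Nij}i) only says $[\kk_\pm,\Gamma(\HH)]\subset\Gamma(\HH)$ and tells you nothing about $[\kk,\tT]$. In fact the opposite conclusion is what your own computations yield: the identities $g([\kk,\tT],\kk)=0$ and $g([\kk,\tT],\tT)=0$ (both correct, via the Koszul formula for $g$ together with $\kk,\tT$ being $g$-geodesic and $g\big|_\VV$ constant on the pair) show, since $A$ in \eqref{nnsing1} is invertible, that the $\VV$-component of $[\kk,\tT]$ vanishes --- that is, $[\kk,\tT]\in\Gamma(\HH)$. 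This is precisely what the paper proves, and then $\gK([\kk,\tT],\kk)=0$ follows at once from $\gK(\VV,\HH)=0$ (Lemma~\ref{basic}), with no need to expand $[\kk,\tT]$ in $\VV$ or argue that a coefficient $a$ vanishes. So the pieces you list are right, but the claim about which distribution contains $[\kk,\tT]$ is inverted; once corrected, your argument coincides with the paper's.
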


\begin{proof}
First, $\kk$ has constant $\gK$-norm as $\kk$ is geodesic of
constant length, $f'=\ell=1$ and $G$ is constant (see Remark~\ref{LIK}
and the end of subsection~\ref{sec:inher}). By Lemma~\ref{basic}, $\tT$
also has constant $\gK$-norm.
Let $\xx$, $\yy$ be a local orthonormal frame
of $g\big|_H$, and $\nK$ denote the Levi-Civita connection of $\gK$.
By the Koszul formula, the constant length of $\kk$ implies
$2\gK(\nK_{\!\kk}\kk,\xx)=-2\gK(\kk,[\kk,\xx])=0$, the last equality following  as $[\kk,\xx]$
lies in $\HH$, since admissible metrics satisfy the hypotheses of Theorem \ref{integ}.
The same holds for $\xx$ replaced by $\yy$, and
clearly also $\gK(\nK_{\!\kk}\kk,\kk)=d_\kk(\gK(\kk,\kk))/2=0$.

The Koszul formula for $g$ also gives
\[
\begin{aligned}
&0=-g(\n_{\!\kk}\kk,\tT)=g(\kk,[\kk,\tT]),\\
&0=-g(\n_{\!\tT}\tT,\kk)=g(\tT,[\tT,\kk])
\end{aligned}
\]
so $[\kk,\tT]$ is tangent to $\HH$. Applying this, we see by
invoking the Koszul formula once more, this time
for $\gK$, that, as $\gK\big|_\VV$ is constant on $\kk, \tT$,
$\gK(\nK_{\!\kk}\kk,\tT)=-\gK(\kk,[\kk,\tT])=0$ by the first relation in Lemma~\ref{basic}. Thus $\gK$ vanishes on
any pair consisting of $\nK_{\!\kk}\kk$ and any vector field in the $\gK$-orthogonal frame $\{\kk,\tT,\xx,\yy\}$,
so 
$\nK_{\!\kk}\kk=0$. A similar argument shows
that 
$\nK_{\!\tT}\tT=0$.
\end{proof}

A corresponding result when $\kk$ is Killing is the following.
\begin{prop}\lb{Killing}
Let $g$ be an admissible semi-Riemannian metric with $\kk$ Killing,
$\ell$, $\ii$ functions of $\ta$, $g(\kk,\tT)=0$ and $[\kk,\tT]=0$. Then for any induced
K\"ahler metric $\gK$, $\kk$ is also $\gK$-Killing.
\end{prop}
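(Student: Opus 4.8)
The plan is to show that the Lie derivative $\mathcal{L}_{\kk}\gK$ vanishes by evaluating it on all pairs drawn from the $\gK$-orthogonal frame $\{\kk,\tT,\xx,\yy\}$ used throughout Section~\ref{sec:prop0}, where $\xx$, $\yy=\J\xx$ is a local oriented $g$-orthonormal frame for $\HH$. Recall the formula $(\mathcal{L}_{\kk}\gK)(a,b)=d_{\kk}(\gK(a,b))-\gK([\kk,a],b)-\gK(a,[\kk,b])$. The key structural inputs are Lemma~\ref{basic} (which gives $\gK(\VV,\HH)=0$, $\gK(\kk,\tT)=0$, $\gK(\kk,\kk)=\gK(\tT,\tT)$, and $\gK\big|_\HH=-f\ii\,g\big|_\HH$), Proposition~\ref{ver-grad} together with iii] of Definition~\ref{adms} (so that $g(\kk,\kk)$, $g(\kk,\tT)$, $\ell$ are functions of $\ta$, hence here constant along $\kk$ since $d_\kk\ta = d_\kk(g(\kk,\tT))/\ell = 0$ using $g(\kk,\tT)=0$ and $[\kk,\tT]=0$... more carefully, one uses $\mathcal{L}_\kk g = 0$), and the hypotheses $[\kk,\tT]=0$, $g(\kk,\tT)=0$, plus $\ell$ and $\ii$ being functions of $\ta$.

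First I would dispose of the pairs involving $\kk$ and $\tT$: since $[\kk,\tT]=0$ and $[\kk,\kk]=0$, the bracket terms drop, so $(\mathcal{L}_\kk\gK)(\kk,\kk)=d_\kk(\gK(\kk,\kk))$, $(\mathcal{L}_\kk\gK)(\kk,\tT)=d_\kk(\gK(\kk,\tT))=0$, and $(\mathcal{L}_\kk\gK)(\tT,\tT)=d_\kk(\gK(\tT,\tT))$. By Lemma~\ref{basic} and the formula $\gK(\kk,\kk)=fd\kk^\flat(\kk,\tT)-f'G/\ell$ (with the Killing case of Remark~\ref{LIK} giving $d\kk^\flat(\kk,\tT)=-d_\tT(g(\kk,\kk))$), this reduces to showing $d_\kk$ annihilates $f(\ta)$, $G$, $\ell$, and $g(\kk,\kk)$; since $\kk$ is Killing with $[\kk,\tT]=0$, it preserves $g$ and commutes with $\tT=\ell\n\ta$, which forces $d_\kk\ta=0$ (as $\mathcal{L}_\kk(\ell d\ta)=0$ and $\ell,\ta$ are related to $\kk$-invariant quantities) — I would spell out that $d_\kk(g(\kk,\kk))=0$ directly from $\kk$ Killing, and that $d_\kk\ta=0$ follows since $g(\kk,\tT)=\ell\, d_\kk\ta$ is a function of $\ta$ and equals zero, combined with $[\kk,\tT]=0$; hence every function of $\ta$ is $\kk$-invariant.

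Next I would handle the mixed pairs $(\kk,\xx)$ and $(\tT,\xx)$: here $(\mathcal{L}_\kk\gK)(\kk,\xx)=d_\kk(\gK(\kk,\xx))-\gK([\kk,\xx],\kk)$, and $\gK(\kk,\xx)=0$ by Lemma~\ref{basic}, while $[\kk,\xx]\in\Gamma(\HH)$ by \Ref{Nij}i) (valid since admissible metrics satisfy the hypotheses of Theorem~\ref{integ}), so $\gK([\kk,\xx],\kk)=0$ again by Lemma~\ref{basic}; similarly for $(\tT,\xx)$, using that $[\kk,\tT]=0$ makes the relevant bracket expansions land in $\HH$. Finally, for the pair $(\xx,\yy)$ I would use $\gK\big|_\HH=-f\ii\,g\big|_\HH$, so that $(\mathcal{L}_\kk\gK)(\xx,\yy)=(\mathcal{L}_\kk(-f\ii\,g))(\xx,\yy)=-d_\kk(f\ii)\,g(\xx,\yy)-f\ii(\mathcal{L}_\kk g)(\xx,\yy)$; the second term vanishes since $\kk$ is Killing, and the first vanishes since $f$ and $\ii$ are functions of $\ta$ and $d_\kk\ta=0$. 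The main obstacle — and the step I would be most careful about — is rigorously establishing $d_\kk\ta=0$ (equivalently, that $\kk$-invariance of $g$ plus $[\kk,\tT]=0$ and $g(\kk,\tT)=0$ forces all the $\ta$-dependent data to be $\kk$-constant), since the rest is a bookkeeping exercise on the orthogonal frame using Lemma~\ref{basic} and the integrability condition \Ref{Nij}i).
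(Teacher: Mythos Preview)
Your approach is correct and essentially identical to the paper's: both verify the Killing identity $d_\kk(\gK(a,b))=\gK([\kk,a],b)+\gK(a,[\kk,b])$ on pairs from the frame $\{\kk,\tT,\xx,\yy\}$, using Lemma~\ref{basic}, condition \Ref{Nij}i), and the Killing case of Remark~\ref{LIK}.

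The only point worth flagging is that the step you call ``the main obstacle'' is in fact immediate and requires neither $[\kk,\tT]=0$ nor the Killing property: since $\tT=\ell\n\ta$, one has $d_\kk\ta=g(\kk,\n\ta)=g(\kk,\tT)/\ell=0$ directly from the hypothesis $g(\kk,\tT)=0$. This is exactly how the paper dispatches it in one line. Where $[\kk,\tT]=0$ and the Killing property \emph{are} genuinely needed is in showing $d_\kk(g(\tT,\tT))=0$ (hence $d_\kk G=0$) and $d_\kk d_\tT(g(\kk,\kk))=0$; you gloss over these, but they follow easily: the first from $(\mathcal{L}_\kk g)(\tT,\tT)=0$ together with $[\kk,\tT]=0$, and the second from commuting $d_\kk$ past $d_\tT$ via $[\kk,\tT]=0$ and then using $d_\kk(g(\kk,\kk))=0$.
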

We omit the proof, which consists of a systematic verification of the Killing field identity
\be\lb{kill}
d_\kk(\gK(a,b))=\gK([\kk,a],b)+\gK(a,[\kk,b])
\end{equation}
when $a$, $b$ are taken from the usual frame vector fields $\kk,\tT,\xx,\yy\!\!=\!\!J\xx$.
Examples where all these conditions are satisfied will be given in Section \ref{skr}.



\vspace{.1in}
Finally, 
the shear condition \Ref{Nij}ii) of Theorem~\ref{integ}
holds also with the shears taken with respect to $\gK$. In fact, this follows
from the following relations between shears with respect to
to an admissible metric $g$ and a K\"ahler metric $\gK$ it induces:
\be\lb{shr-div}
(\nK\!)^o\kk=\n^o\kk, \qquad (\nK\!)^o\tT=\n^o\tT.
\end{equation}
These relations can be verified by direct calculation.
Of course, there is a more conceptual argument for the validity of the shear
condition \Ref{Nij}ii) with respect to the K\"ahler metric. Namely, the proof
of Theorem \ref{integ} shows that if conditions \Ref{Nij}i) hold, then the
Nijenhuis tensor vanishes if and only if the shear condition \Ref{Nij}ii) is
satisfied. This vanishing, along with \Ref{Nij}i), are of course metric
independent conditions, so it follows that, assuming \Ref{Nij}i), if
$J$ is integrable, condition \Ref{Nij}ii) must hold for any metric for
which \Ref{nnsing} and \Ref{space} hold.
Now \Ref{space} certainly holds for the Riemannian metric $\gK$,
while \Ref{nnsing} also holds as it is independent of the metric.


\subsection{Repeated admissibility}\lb{repeat}
In the following proposition admissible metrics induce K\"ahler metrics that are also
admissible.
\begin{prop}\lb{repeat2}
Let $g$ be an admissible semi-Riemannian metric on an oriented $4$-manifold $M$
with distinguished vector fields $\kk$, $\tT$.
Assume additionally that $g(\kk,\tT)=0$ and
$\ell$, 
$g(\tT,\tT)$, $g([\kk,\tT],\kk)$ are all functions of $\ta$.
For a given $f:\mathrm{Im}\,\ta\to\mathbb{R}$, let $\gK=\gK(g,\kk,\tT,f)$ be
an induced K\"ahler metric on an appropriate region. Then $\{\gK,\kk,\tT\}$
is admissible and satisfies all the conditions mentioned above for $g$. Thus there exists
an infinite sequence of admissible K\"ahler metrics $\gK^{(n)}$, $n\in\mathbb{N}$,
with $\gK^{(1)}:=\gK$, each defined by the quadruple $\{\gK^{(n-1)},\kk,\tT, f$\} on
an appropriate region $U^{(n)}\subset M$ (which may, however, be empty for some $n$).
\end{prop}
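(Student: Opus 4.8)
The plan is to verify that the quadruple $\{\gK,\kk,\tT,f\}$ again satisfies the hypotheses of the proposition, after which the infinite sequence follows by an immediate induction. Concretely, one must check: (a) $\gK$ is admissible in the sense of Definition~\ref{adms} with the same vector fields $\kk$, $\tT$; (b) $\gK(\kk,\tT)=0$; and (c) $\ell$ (the same function, since $\tT=\ell\n^{\gK}\ta$ must be re-examined), $\gK(\tT,\tT)$, and $\gK([\kk,\tT],\kk)$ are all functions of $\ta$.

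First I would dispose of the easy pieces. Item (b) is immediate from Lemma~\ref{basic}. For item (a)i], the integrability relations \Ref{Nij}: condition i) is metric-independent once $\VV$, $\HH$ are fixed, and the proof of Theorem~\ref{ad-gen} shows $\gK$ has the same $\VV$, $\HH$ decomposition as $g$ (indeed $\gK(\VV,\HH)=0$); condition ii) for $\gK$ is exactly the content of \Ref{shr-div} together with the ``conceptual argument'' given right after it. For (a)ii], one needs $\tT=\ell_K\n^{\gK}\ta$ for some smooth $\ell_K$; since $\gK\big|_\HH=-f\ii\,g\big|_\HH$ and $\gK\big|_\VV$ is diagonal in the $\kk,\tT$ frame with $\gK(\tT,\tT)=\gK(\kk,\kk)$ a function of $\ta$ (via Remark~\ref{LIK} and \Ref{alt}, using $g(\kk,\tT)=0$ and the stated hypotheses), the $\gK$-gradient of $\ta$ is a multiple of $\tT$, and the multiplier $\ell_K=g(\tT,\tT)_{\text{(w.r.t. }\gK)}$-related factor is itself a function of $\ta$. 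For (a)iii], $\n^{\gK}(\gK(\kk,\tT))=\n^{\gK}(0)=0\in\Gamma(\VV)$ trivially, and $\gK(\kk,\kk)$ is a function of $\ta$ so its $\gK$-gradient is a multiple of $\n^{\gK}\ta$, hence in $\VV$.

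The remaining work, which I expect to be the main obstacle, is item (c): showing $\gK([\kk,\tT],\kk)$ is a function of $\ta$. Here one must use that $[\kk,\tT]\in\Gamma(\HH)$ — this follows from \Ref{Nij}i) applied with the roles reasoned as in Proposition~\ref{geo-geo}'s proof, given $g(\kk,\tT)=0$ and $\tT=\ell\n\ta$ — so that $\gK([\kk,\tT],\kk)$ can be computed via $\gK\big|_\HH$... but wait, $\kk\notin\HH$, so in fact $[\kk,\tT]\in\Gamma(\HH)$ forces $\gK([\kk,\tT],\kk)=0$ by the first relation in Lemma~\ref{basic}. Thus $\gK([\kk,\tT],\kk)\equiv 0$, trivially a function of $\ta$. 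Similarly $\gK(\tT,\tT)=\gK(\kk,\kk)$ which, by the formula $\gK(\kk,\kk)=fd\kk^\flat(\kk,\tT)-f'G/\ell$ and the fact that under our hypotheses each ingredient ($f$, $G=-g(\kk,\kk)g(\tT,\tT)$ since $g(\kk,\tT)=0$, $\ell$, and $d\kk^\flat(\kk,\tT)=-g(\n_\tT\kk,\kk)=-d_\tT(g(\kk,\kk))/2$ up to a geodesic-type term controlled by the admissibility data) is a function of $\ta$, is itself a function of $\ta$.

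Finally, with $\{\gK,\kk,\tT\}$ shown to satisfy exactly the same structural hypotheses as $\{g,\kk,\tT\}$, the induced K\"ahler metric $\gK^{(2)}:=\gK(\gK,\kk,\tT,f)$ is defined on the region $U^{(2)}$ carved out by the inequalities \Ref{ineq} (now computed with respect to $\gK$), and is again admissible satisfying the same hypotheses. Iterating, we obtain the sequence $\gK^{(n)}$ on nested regions $U^{(n)}$; the caveat that $U^{(n)}$ may be empty for some $n$ is simply the observation that nothing forces \Ref{ineq} to have nonempty solution set at each stage. This completes the argument.
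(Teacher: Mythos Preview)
Your overall strategy matches the paper's, and most of the verifications are correct. However, there is a genuine error in your treatment of item (c): you claim that $[\kk,\tT]\in\Gamma(\HH)$, but this is \emph{not} a consequence of the hypotheses. The argument you cite from Proposition~\ref{geo-geo} relies on $\kk$ and $\tT$ being geodesic of constant length, which is not assumed here. Under the present hypotheses one can show $g([\kk,\tT],\tT)=0$ (using $g(\kk,\tT)=0$ and $\tT=\ell\n\ta$), but $g([\kk,\tT],\kk)$ is only assumed to be a function of $\ta$, not zero. Indeed, the proposition immediately following this one in the paper explicitly works with the case where the coefficient $a:=g([\kk,\tT],\kk)/g(\kk,\kk)$ is nonzero.

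The fix is short and is exactly what the paper does: expand $[\kk,\tT]=a\kk+b\tT+(\text{$\HH$-part})$ in the usual frame. Since $g(\kk,\tT)=0$ and $g(\HH,\kk)=0$, one has $g([\kk,\tT],\kk)=a\,g(\kk,\kk)$, so $a$ is a function of $\ta$ (as $g(\kk,\kk)$ is nowhere vanishing by $G\ne 0$ and is itself a function of $\ta$). Then, since $\gK(\tT,\kk)=0$ and $\gK(\HH,\kk)=0$ as well, $\gK([\kk,\tT],\kk)=a\,\gK(\kk,\kk)$, and both factors are functions of $\ta$. This replaces your incorrect ``$\gK([\kk,\tT],\kk)\equiv 0$'' with the right computation.

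A minor additional point: the function multiplying $\n^{\gK}\ta$ is a new $\ell_K$, not the original $\ell$; you should state explicitly that $\ell_K$ is a function of $\ta$ because $\gK(\nK\ta,\tT)=d\ta(\tT)=g(\tT,\tT)/\ell$ and $\gK(\tT,\tT)$ are both functions of $\ta$.
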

\begin{proof}
To show that $\gK$ is admissible, taking into account that it is Riemannian,
that $\gK(\kk,\tT)=0$ and that \Ref{shr-div} holds, it is enough to check
$\nK(\gK(\kk,\kk))\in\Gamma(\VV)$ and $\tT=\ell_K\nK\ta$ for some function $\ell_K$.

For the first of these, our assumptions together with \Ref{alt} and the last
sentence of Section~\ref{sec:prop0} guarantee that $\gK(\kk,\kk)$
is a function of $\ta$. Since $d_\xx\ta=g(\tT/\ell,\xx)=0$ for any $\xx\in\Gamma(\HH)$,
the claim follows.

For the second claim, let $\xx$, $\yy$ be an orthonormal frame for $g\big|_\HH$.
Since $\gK(\nK\ta,b)=d\ta(b)=g(\n\ta,b)=g(\tT,b)/\ell$ and $\gK(\tT,b)$
are both zero for $b=\kk,\xx,\yy$, the only nonzero component of $\nK\ta$
in the usual frame, which is $\gK$-orthogonal, is a multiple $\ell_K$ of $\tT$.
Thus $\gK$ is admissible.

It remains to verify that the other assumptions of the proposition hold for
$\gK$. We already know that $\gK(\kk,\tT)=0$ and that $\gK(\tT,\tT)=\gK(\kk,\kk)$
is a function of $\ta$.
For $\ell_K$ this is also clear since so are $\gK(\nK\ta,\tT)=g(\tT,\tT)/\ell$ and $\gK(\tT,\tT)$.
Finally, expanding in the usual frame $[\kk,\tT]=a\kk+\ldots$,
clearly $g([\kk,\tT],\kk)=ag(\kk,\kk)$, so $a$ is a function of $\ta$
($g(\kk,\kk)$ is nowhere vanishing as $G\ne 0$). Hence so is $\gK([\kk,\tT],\kk)=a\gK(\kk,\kk)$.


\end{proof}
One situation in which the domains of these successive K\"ahler metrics are never empty
is described in the following proposition, whose proof is omitted.
\begin{prop}
Under the conditions of Proposition~\ref{repeat2}, if additionally $p:=g(\kk,\kk)>0$,
$fp$ is constant and $fa$ is constant for a nonzero coefficient
$a:=g([\kk,\tT],\kk)/g(\kk,\kk)$, then the domains of $\gK^{(n)}$ satisfy
$U^{(n)}\subset U^{(n+1)}$ for all $n$.
\end{prop}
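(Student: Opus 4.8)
The plan is to reduce the whole statement to a single scalar recursion for $p_n:=\gK^{(n)}(\kk,\kk)$, with $p_0:=p=g(\kk,\kk)$. Applying Lemma~\ref{basic} at each stage --- legitimate because, by Proposition~\ref{repeat2}, every $\gK^{(n)}$ is again admissible with the same pair $\kk,\tT$, and $\ell$, $\gK^{(n)}(\tT,\tT)$, $\gK^{(n)}([\kk,\tT],\kk)$ remain functions of $\ta$ --- one gets $\gK^{(n)}(\kk,\tT)=0$, $\gK^{(n)}(\tT,\tT)=p_n$, and $\gK^{(n)}\big|_\HH$ equal to a positive multiple of $g\big|_\HH$; in particular $\HH$ and the complex structure $J$ are unchanged along the iteration. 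By Theorem~\ref{ad-gen} together with Lemma~\ref{basic} and \Ref{alt}, the region $U^{(n)}$ is precisely the set of points of $U^{(n-1)}$ at which $\gK^{(n)}\big|_\HH$ is positive definite and $p_n>0$, so the whole problem reduces to computing $p_n$ and reading off these two inequalities. Throughout write $q:=g(\tT,\tT)$ (nowhere zero, since $G=pq\ne 0$) and $\iota^{(n)}_\kk$ for the twist of $\kk$ with respect to $\gK^{(n)}$, with $\iota^{(0)}_\kk:=\iota^g_\kk$.

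The first main step is the recursion $p_n=-(fa)\,p_{n-1}$ for all $n\ge 1$. Using that the coefficient $a$ of $\kk$ in the expansion of $[\kk,\tT]$ in the frame $\{\kk,\tT,\xx,\yy\}$ is metric-independent (so $\gK^{(n-1)}([\kk,\tT],\kk)=a\,p_{n-1}$), that $d_\tT\ta=q/\ell$ is likewise metric-independent, that $\det[\gK^{(n-1)}(\kk_\pm,\kk_\pm)]=p_{n-1}^2$, and that $\tT$ equals $(\ell p_{n-1}/q)$ times the $\gK^{(n-1)}$-gradient of $\ta$, formula \Ref{alt} applied to $\gK^{(n)}$ induced from $\gK^{(n-1)}$ gives
\[
p_n \;=\; -\,fa\,p_{n-1}\;-\;\frac{q}{\ell}\,(f p_{n-1})'.
\]
Since $fp_0=fp$ is constant, $(fp_0)'=0$, so $p_1=-(fa)p_0$; then $fp_1=-(fa)(fp_0)$ is again constant, and an induction shows $f p_{n-1}$ is constant for every $n$, whence $(fp_{n-1})'=0$ and $p_n=-(fa)p_{n-1}$. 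Writing $\kappa:=-fa$ --- a nonzero constant, since $fa$ being a nonzero constant forces $f$ and $a$ to be nowhere vanishing --- we obtain $p_n=\kappa^n p$.

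The second main step converts the two positivity conditions into conditions on $\kappa$. The analog of Lemma~\ref{basic} gives $\gK^{(n)}\big|_\HH=-f\,\iota^{(n-1)}_\kk\,\gK^{(n-1)}\big|_\HH$, so, $\gK^{(n-1)}\big|_\HH$ being positive definite on $U^{(n-1)}$, positive definiteness of $\gK^{(n)}\big|_\HH$ amounts to $-f\,\iota^{(n-1)}_\kk>0$. For this I would prove the transformation rule for the twist: for admissible $g$ with $g(\kk,\tT)=0$ and induced $\gK$,
\[
\iota^{\gK}_\kk\;=\;-\,\frac{\gK(\kk,\kk)}{f\,g(\kk,\kk)}\,,
\]
by evaluating $\iota^{\gK}_\kk=\gK(\kk,[\xx/s,\yy/s])$ in the $\gK$-orthonormal frame $\{\xx/s,\yy/s\}$ of $\HH$ with $s^2=-f\iota^g_\kk$ --- the $d(1/s)$-terms dropping out because $\gK(\kk,\HH)=0$ --- and expanding the $\VV$-component of $[\xx,\yy]$ using $g(\kk,\tT)=\gK(\kk,\tT)=0$. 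Applied at stage $n-1\to n$ this gives $-f\,\iota^{(n-1)}_\kk=p_{n-1}/p_{n-2}=\kappa$ for $n\ge 2$, while for $n=1$ it is the genuine point-dependent quantity $-f\iota^g_\kk$; set $W:=\{\,-f\iota^g_\kk>0\,\}$. Finally $p_n>0\iff\kappa^n>0$, since $p>0$.

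The conclusion follows by a sign analysis of $\kappa$. If $\kappa>0$: for every $n\ge 2$ both conditions cutting $U^{(n)}$ out of $U^{(n-1)}$ --- ``$\gK^{(n)}\big|_\HH$ positive definite'', which reads $\kappa>0$, and ``$p_n>0$'', which reads $\kappa^n>0$ --- hold on all of $M$, and likewise ``$p_1>0$'' holds everywhere; an immediate induction then gives $U^{(n)}=W$ for all $n\ge 1$, so in fact $U^{(n)}=U^{(n+1)}$. If $\kappa<0$: $p_1=\kappa p<0$ everywhere, so $U^{(1)}=\emptyset$, hence $U^{(n)}=\emptyset$ for all $n$, and the inclusion is trivial. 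In either case $U^{(n)}\subset U^{(n+1)}$. The only steps with genuine content are the recursion $p_n=\kappa^n p$ --- whose crux is propagating $(f p_{n-1})'=0$ by induction from the hypothesis that $fp$ is constant --- and the twist transformation formula; everything else is bookkeeping with Theorem~\ref{ad-gen}, Lemma~\ref{basic} and \Ref{alt}. The most delicate point is keeping the ``appropriate region'' language of Proposition~\ref{repeat2} precise --- in particular, that $U^{(n)}\subseteq U^{(n-1)}$, so that at each stage only the two new positivity inequalities need checking --- but this is already built into the iterative construction.
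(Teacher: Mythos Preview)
Your argument is correct and follows essentially the same route as the paper: both proofs derive the recursion $p_n=-(fa)\,p_{n-1}$ from \Ref{alt} (using that $fp_{n-1}$ stays constant by induction), and both reduce the horizontal positivity condition to a statement about consecutive $p_n$'s via the coefficient of $\kk$ in $[\xx,\yy]$. Your packaging of the twist step as the closed formula $\iota^{\gK}_\kk=-\gK(\kk,\kk)/(f\,g(\kk,\kk))$, and your explicit dichotomy on the sign of $\kappa=-fa$ (the paper leaves the case $fa>0$, where $U^{(1)}=\emptyset$, implicit), are minor streamlinings rather than a different approach.
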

\subsection{Completeness of geodesic vector fields}\lb{comp}
\begin{prop}
Let $g$ be an admissible semi-Riemannian metric on a manifold $M$, with $\kk$ a
geodesic vector field of constant length. Suppose for some choice of a function
$f(\tau)$, a complete K\"ahler metric $\gK$
is induced 
on {\em all of} $M$. Then any inextendible integral curve of $\kk$ is defined on the whole real line if and only if $f'G/\ell$ is
bounded below on the curve.
\end{prop}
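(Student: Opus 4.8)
The plan is to reduce the statement to the geodesic-completeness criterion of \cite{SC} alluded to in the introduction, after rewriting $f'G/\ell$ along an integral curve in terms of that curve's $\gK$-speed.

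First I would record the needed identity. Since $\kk$ has geodesic flow and constant $g$-length, the quantity $d\kk^\flat(\kk,\tT)=g(\n_\kk\kk,\tT)-d_\tT(g(\kk,\kk))/2$ vanishes, so by Lemma~\ref{basic} together with Remark~\ref{LIK} one has $\gK(\kk,\kk)=-f'G/\ell$ on all of $M$; in particular $-f'G/\ell>0$ everywhere, which also follows from positive-definiteness of $\gK$. Now let $\gamma$ be an inextendible integral curve of $\kk$. Because $\n_\kk\kk=0$, the curve $\gamma$ is an affinely parametrized $g$-geodesic of constant $g$-speed, and along it $|\dot\gamma|_{\gK}^2=-f'G/\ell\big|_\gamma$. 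Hence ``$f'G/\ell$ is bounded below on $\gamma$'' is exactly ``$\gamma$ has $\gK$-speed bounded along its domain,'' and the assertion to be proved becomes: an inextendible $g$-geodesic with velocity field $\kk$ on the complete Riemannian manifold $(M,\gK)$ is defined on all of $\mathbb{R}$ if and only if its $\gK$-speed is bounded along it.

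One implication of this reformulation is elementary, and I would argue it directly. Suppose $|\dot\gamma|_{\gK}\le C$ along $\gamma\colon(a,b)\to M$, and suppose for contradiction that $b<\infty$. Then for $t_0<t_1<b$ the $\gK$-distance satisfies $d_{\gK}(\gamma(t_0),\gamma(t_1))\le\int_{t_0}^{t_1}|\dot\gamma|_{\gK}\,dt\le C(b-t_0)$, so $\gamma(t)$ is $\gK$-Cauchy as $t\to b^-$; completeness of $\gK$ yields a limit point $p$, and the local flow of the smooth vector field $\kk$ near $p$ extends $\gamma$ past $b$, contradicting inextendibility. Thus $b=\infty$, and symmetrically $a=-\infty$, so $\gamma$ is defined on all of $\mathbb{R}$.

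For the reverse implication — that completeness of $\gamma$ forces its $\gK$-speed to be bounded along it — I would invoke the result of \cite{SC}, which is precisely the mechanism linking completeness of the auxiliary Riemannian metric $\gK$ to completeness of $g$-geodesics when the flow is geodesic; feeding the identity $|\dot\gamma|_{\gK}^2=-f'G/\ell\big|_\gamma$ into its conclusion closes the argument. I expect this reverse implication to be the only genuine obstacle: it fails outright for integral curves of a general vector field on a complete Riemannian manifold (a linear vector field on $\mathbb{R}^n$ has complete orbits of unbounded speed), so the $g$-geodesic nature of the flow lines of $\kk$ is indispensable, and it is exactly this structural input that \cite{SC} is designed to exploit; everything else is the bookkeeping above.
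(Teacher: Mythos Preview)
Your approach is essentially the paper's: identify $\gK(\kk,\kk)=-f'G/\ell$ via Remark~\ref{LIK} and then invoke \cite[Proposition~3.4]{SC}, which the paper paraphrases as the biconditional that a geodesic is extendible beyond a finite parameter interval if and only if its tangents have bounded length in some complete Riemannian metric. The only difference is cosmetic: you supply the elementary Cauchy--completeness argument for one implication by hand and reserve the citation of \cite{SC} for the converse, whereas the paper cites \cite{SC} for both directions at once.
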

\begin{proof}
According to \cite[Proposition 3.4]{SC}, a geodesic will be extendible
beyond a finite interval of its parameter domain if and only if in some
complete Riemannian metric the length of its tangents is bounded.
Taking $\gK$ to be this metric, and the geodesic an integral curve of $\kk$,
extendibility will occur 
if and only if $\gK(\kk,\kk)$ is bounded along it. But $\gK(\kk,\kk)=-f'G/\ell$
(see Remark~\ref{LIK}).
\end{proof}
Note that as $\gK(\tT,\tT)=\gK(\kk,\kk)$,  if $\tT$ is also geodesic,
the same holds for its integral curves. If $g$ is Lorentzian or one of
these vector fields is null, the same holds if $f'/\ell$ is bounded
above on the curve, as $G$ is negative (see \Ref{G}).

Examples of complete K\"ahler metrics induced from admissible manifolds
appear in subsection~\ref{complt}.

\subsection{Admissible metrics inducing the same K\"ahler metric}\lb{invert}

We briefly outline, without proof, a pair of circumstances in which two
different semi-Riemannian metrics induce the same K\"ahler metric (for a given $f(\ta)$).


\subsubsection{Varying $g\big|_\HH$}
On a given oriented $4$-manifold fix two pointwise linearly independent
vector fields $\kk$, $\tT$ spanning a distribution $\VV$. Given a semi-Riemannian
admissible metric $g$, consider a biconformal change of the form
\[
g=g\big|_\VV\oplus g\big|_\HH\to\tg=g\big|_\VV\oplus\beta^2g\big|_\HH
\]
for the usual $\HH$ and a nowhere vanishing function $\beta$.
One can show that $\tg$ is admissible and for fixed $f(\tau)$ one has $\tg_{\scriptscriptstyle{K}}=\gK$.

In Section \ref{skr} admissible metrics 
will be produced from a special type of K\"ahler metric that
they, in turn, induce. In that setting there
will be a canonical choice for $g\big|_\HH$ in such a biconformal class.

\subsubsection{Varying $g\big|_\VV$}

We now give a complementary change of metric, where $g\big|_\HH$ is fixed
while $g\big|_\VV$ varies.
Let $g$ be an admissible semi-Riemannian metric 
satisfying the hypotheses of 
Proposition \ref{geo-geo}. Let $\bg=g+\ep d\ta^2$ for
small $\ep>0$. The parameter $\ep$ is only introduced so that $g$ and $\bg$
have the same signature. Again one can show $\bg$ is admissible,
and for a given $f(\tau)$ the symplectic forms and admissible almost complex
structures associated with $g$ and $\bg$ coincide, so that $\bg_{\scriptscriptstyle K}=\gK$.
\section{Ricci and scalar curvatures of $\gK$}
\label{Ric-scal}

We give here formulas for the Ricci and scalar curvature of an
induced K\"ahler metric $\gK$ under certain assumptions,
most importantly that the
metrics conform to a certain bundle structure.

\vspace{.08in}
The Ricci form of a K\"ahler metric is given via
\be\lb{ric}
\rho=-i\partial\bar{\partial}\log (\mu/\nu)=-\fr 12dJd\log (\mu/\nu),
\end{equation}
where $\mu$ is the volume form coefficient in a coordinate system and
$\nu$ is the coefficient for the coordinate volume form in corresponding
complex coordinates (cf. \cite[\S1.4.3]{varo}).
We apply this general formula to the case of the K\"ahler
metric $\gK$ associated to an admissible semi-Riemannian manifold,
computed under certain assumptions detailed below. We give
two formulas, one for the case that $\kk$ is geodesic of constant length,
the other for the case it is Killing. Examples fulfilling these assumptions
will be given in Section \ref{skr}.

\begin{prop}\lb{rc-scal1}
Let $(M,g)$ be admissible, with admissible complex structure $J=\J$.
Assume that $M$ is an open set in the total space of a holomorphic line bundle over a
(two dimensional) K\"ahler surface $(N,h)$ with holomorphic projection map $\pi$, and
$g\big|_\HH=\pi^*h$. Suppose $\kk$, $\tT$ commute and are shear-free,
$\ell$ is a function of $\ta$, $\kk$ is geodesic of constant nonzero length
and $\kk^\flat$ is locally a sum of an exact form and one vanishing on $\VV$.

Let $\om^h=r\, dx\we dy$ be the K\"ahler form of $h$, expressed in
local coordinates where $z=x+iy$ is a holomorphic
coordinate on $N$.
Then for any K\"ahler metric $\gK$ induced by $g$ and a parameter function $f(\ta)$,
its Ricci form and scalar curvature are given, respectively, by
\[
\rho_{\scriptscriptstyle{K}}=
-\fr 12dJd\log (-ff'r\ii/\ell),
\]
and
\be\lb{scal}
s_{\scriptscriptstyle{K}}=\fr 12((\log(f'f/\ell))'af)'/(ff')
-\ast\left[\fr 12\om\we dJd\log(-r\ii)\right],
\end{equation}
where $\om$, $\ast$ are the K\"ahler form and Hodge star operator of $\gK$, respectively,
and $a$ is the coefficient of $\kk$ in the expression for $Jd\ta$ as a
linear combination of $\kk$ and $d\ta$.
\end{prop}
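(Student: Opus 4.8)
The plan is to compute both curvature quantities from the general K\"ahler formula \Ref{ric}, exploiting the bundle structure $M\subset L\to(N,h)$ to split everything into a ``base'' contribution and a ``fiber'' contribution. First I would set up adapted coordinates: holomorphic coordinate $z=x+iy$ pulled back from $N$, together with a fiber holomorphic coordinate, chosen so that $J$ acts in the standard way. Since $g\big|_\HH=\pi^*h$ and $\kk$, $\tT$ are shear-free and commute, Proposition~\ref{holo} tells us $\kk$, $\kk_-=\tT$ are holomorphic, so $\VV$ is a holomorphic distribution and the fiber coordinate can be taken adapted to $\kk$, $J\kk=\tT$; the hypothesis that $\kk^\flat$ is locally a sum of an exact form and a form vanishing on $\VV$ is exactly what lets $f(\ta)\kk^\flat$ be massaged into a local K\"ahler potential. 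Using Lemma~\ref{basic} ($\gK\big|_\HH=-f\ii\,g\big|_\HH$, $\gK(\kk,\kk)=\gK(\tT,\tT)$, $\gK(\VV,\HH)=0$) the metric $\gK$ is block-diagonal, so its volume form coefficient $\mu$ factorizes as the product of the $\VV$-block determinant and the $\HH$-block determinant; the latter is (a multiple of) $(-f\ii)\cdot r$ by the coordinate expression $\om^h=r\,dx\we dy$, and the former is controlled by $\gK(\kk,\kk)=-f'G/\ell$ together with $G=-\ell^2\,g(\kk,\kk)\,(\text{something})$ — here I would use that $\ell$ and $g(\kk,\kk)$ are functions of $\ta$ (the last sentence of Section~\ref{sec:prop0}) to see $G/\ell$ depends only on $\ta$, and that $d\ta$ is essentially the fiber coordinate direction. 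Matching against the coordinate volume $\nu$ in the complex coordinates, the ratio $\mu/\nu$ collapses to a constant times $-ff'r\ii/\ell$, and plugging into $\rho_{\scriptscriptstyle K}=-\fr12 dJd\log(\mu/\nu)$ gives the stated Ricci form.

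For the scalar curvature, I would start from $s_{\scriptscriptstyle K}=2\,\Lambda\rho_{\scriptscriptstyle K}$ (contraction of the Ricci form with the K\"ahler form), equivalently $s_{\scriptscriptstyle K}\,\om^2/2!=2\rho_{\scriptscriptstyle K}\we\om$, i.e. $s_{\scriptscriptstyle K}=\ast(2\rho_{\scriptscriptstyle K}\we\om)$ up to the normalization dictated by the paper's conventions. Feeding in $\rho_{\scriptscriptstyle K}=-\fr12 dJd\log(-ff'r\ii/\ell)$ and splitting $\log(-ff'r\ii/\ell)=\log(f'f/\ell)+\log(-r\ii)$, the second summand contributes the term $-\ast\big[\fr12\om\we dJd\log(-r\ii)\big]$ verbatim. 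The first summand, $\log(f'f/\ell)$, is a function of $\ta$ alone, so $dJd$ of it is a $2$-form built from $d\ta$ and $Jd\ta$; writing $Jd\ta=a\,\kk^\flat+(\text{lower-order / }\HH\text{-terms})$ — precisely the coefficient $a$ named in the statement — and wedging with $\om$, then applying $\ast$, should produce the ODE-type expression $\fr12\big((\log(f'f/\ell))'af\big)'/(ff')$. Getting the exact constant and the exact placement of $f$, $f'$ here is where care is needed: I expect to use $\om(\kk,\tT)=\gK(\kk,\kk)=-f'G/\ell$ and $\om\big|_\HH=-f\ii\,g\big|_\HH$ to evaluate $\ast$ on the relevant $4$-form, and to use $d_\tT\ta=1/\ell$ (or the appropriate normalization) to convert derivatives along $\tT$ into $\ta$-derivatives, accounting for the $\fr1{ff'}$ Jacobian factor.

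The main obstacle will be the bookkeeping in the scalar-curvature computation: correctly decomposing $Jd\ta$ and $d(f'f/\ell\text{-type terms})$ in the frame $\{\kk^\flat,d\ta,\xx^\flat,\yy^\flat\}$, keeping track of which pieces lie along $\VV$ versus $\HH$, and then honestly evaluating the Hodge star of the resulting $4$-forms against the (conformally rescaled, block-diagonal) metric $\gK$ — including all the $f$, $f'$, $\ell$, $\ii$ weights and the overall sign/constant normalizations fixed by \Ref{ric}. A secondary subtlety is verifying that the hypothesis on $\kk^\flat$ (exact plus $\VV$-annihilating) genuinely delivers a \emph{local} K\"ahler potential compatible with the chosen holomorphic coordinates, so that the formula \Ref{ric} for $\mu/\nu$ applies with the clean expression claimed; this is really a consistency check that the complex coordinates adapted to the bundle and to $J\big|_\VV$ can be chosen simultaneously, which follows from $\kk$, $\tT$ being holomorphic and commuting, but should be stated explicitly.
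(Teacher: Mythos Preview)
Your plan is correct and follows essentially the same route as the paper: compute $\mu$ from $\om^2/2$ (the paper does this directly rather than via a block determinant, but the result is the same), identify $\nu$ using the holomorphic coframe $\{\psi=\kk^\flat+i\tT^\flat,\,dz\}$ furnished by Proposition~\ref{holo}, then split $\log(-ff'r\ii/\ell)=\log(ff'/\ell)+\log(-r\ii)$ and wedge with $\om$ to get $s_{\scriptscriptstyle K}$. Two small corrections will streamline the execution. First, $Jd\ta$ has \emph{no} $\HH$-component (it vanishes on $\HH$ since $d\ta$ does and $J\HH=\HH$); the actual decomposition is $Jd\ta=a\,\kk^\flat+b\,d\ta$ with $b$ constant, and the constancy of $b$ is precisely what kills the $b\,d\ta$ term after applying $d$. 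Second, the hypothesis that $\kk^\flat$ is locally (exact) $+$ (form vanishing on $\VV$) is not used to build a K\"ahler potential---$\om=d(f\kk^\flat)$ is already exact---but rather to compute $\nu$: with $\tT^\flat=\ell\,d\ta$ exact (since $\ell=\ell(\ta)$), the assumption on $\kk^\flat$ is exactly what makes $\psi\wedge\bar\psi\wedge dz\wedge d\bar z$ reduce, up to a constant, to $\ell$ times the coordinate volume.
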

Note that we could write the second term in \Ref{scal} much more explicitly
by replacing $\om$ via \Ref{decom}, but we have chosen not to, in order to keep
the formula less cluttered. Also note that the assumption on $\kk^\flat$ is
fulfilled in the case it is a connection $1$-form.
\begin{proof}
Given our assumptions on $g\big|_\HH$, the symplectic form $\om$ takes the form
\[
\om=f'd\ta\we \kk^\flat+fd\kk^\flat=f'd\ta\we \kk^\flat-f\ii r\,dx\we dy,
\]
where we dropped pull-backs by $\pi$ from the notation.
Thus the volume form is
\be\lb{vol}
\mathrm{Vol}=\om^2/2=-ff'r\ii \, d\ta\we \kk^\flat\we dx\we dy,
\end{equation}
and $\mu=-ff'r\ii$.

Next, our assumptions, taken together with Proposition \ref{holo}, mean that
$\Xi:=\kk-i\tT$ is a holomorphic vector field and $\psi:=\kk^\flat+i\tT^\flat$
a holomorphic $1$-form which evaluates to a nonzero constant on it. Employing
$\psi$ together with $dz$ one computes the coordinate volume form coefficient to find,
as $\tT^\flat=\ell d\ta$ is exact because $\ell$ is a function of $\ta$, that up to a
multiplicative constant, one can take $\nu=\ell$ (note that the assumption on
$\kk^\flat$ is used in this step).

Substituting in \Ref{ric} gives the formula for the Ricci form.

To compute the scalar curvature, we first note that $Jd\tau=J\tT^\flat/\ell$ is a
linear combination of $\kk^\flat$ and $d\ta$, as it vanishes on $\HH$.
Substituting $\kk$ and $\tT$ in $Jd\ta=a\kk^\flat+bd\ta$ and solving the
linear system for $a$, $b$, one easily sees, as the metric $g$ has constant values
when evaluated on  pairs from $\{\kk,\tT\}$, that $b$ is constant, while $a$,
which also depends on $\ell$, is a function of $\ta$. Thus
\[
\begin{aligned}
\rho_{\scriptscriptstyle{K}}&=
-\fr 12dJd\log (-ff'r\ii/\ell)=-\fr 12d[(\log(ff'/\ell))'Jd\tau] - \fr 12dJd\log (-r\ii)\\
&=-\fr 12d[(\log(ff'/\ell))'(a\kk^\flat+bd\ta)] - \fr 12dJd\log (-r\ii)\\
&=-\fr 12d[(\log(ff'/\ell))'a\kk^\flat] - \fr 12dJd\log (-r\ii)\\
&=-\fr 12((\log(ff'/\ell))'a)'d\ta\we\kk^\flat - \fr 12(\log(ff'/\ell))'ad\kk^\flat - \fr 12dJd\log (-r\ii)
\end{aligned}
\]
Computing now $\om\we\rho_{\scriptscriptstyle{K}}$, the contribution from the
first two terms in the last line above combines
to be
\[
\fr 12((\log(f'f/\ell))'af)'r\ii d\ta\we \kk^\flat\we dx\we dy
\]
Since the scalar curvature is $s_{\scriptscriptstyle{K}}=\ast(\om\we\rho_{\scriptscriptstyle{K}})$, and $\ast\mathrm{Vol}=1$, the result follows.
\end{proof}

A special case of this formula for the scalar curvature occurs when $\ell=1$ and $f(\tau)=\tau_c$,
as then ($a$ is constant and hence) the first term in \Ref{scal} vanishes, giving
\[
s_{\scriptscriptstyle{K}}=-\ast\big[\fr 12\om\we dJd\log(-r\ii)\big].
\]

\vspace{.1in}
\begin{prop}
Let $(M,g)$ be admissible with admissible complex structure $J=\J$.
Suppose all assumptions of Proposition \ref{rc-scal1} hold,  except that $\kk$ is not
geodesic of constant  length, but rather Killing.
Assume $g(\kk,\tT)=0$, $q:=g(\tT,\tT)\ne 0$ and $p:=g(\kk,\kk)$, $\ii$
are, in addition to $\ell$, functions of $\ta$.
Then for any K\"ahler metric $\gK$ induced by $g$ and a parameter function $f(\ta)$,
its Ricci form and scalar curvature are given, respectively, by
\[
\rho_{\scriptscriptstyle{K}}=
-\fr 12dJd\log (-f\,(f'+f\, p'/p)r\ii p/\ell),
\]
and
\be\lb{scal1}
\begin{aligned}
s_{\scriptscriptstyle{K}}&=\fr {(fa P')'+2fa P'p'/p} {f\,(f'+f\, p'/p)}
-\fr 12\ast[\om\we dJd\log r],\\[2pt]
\end{aligned}
\end{equation}
where $a=-q/(p\ell)$, $P=-\log (-f\,(f'+f\, p'/p)p\,\ii/\ell)/2$
and $\ast$ is the Hodge star operator of $\gK$.
\end{prop}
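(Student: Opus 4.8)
The plan is to mimic the proof of Proposition~\ref{rc-scal1} as closely as possible, tracking the modifications forced by the replacement of the ``geodesic of constant length'' hypothesis with the Killing hypothesis. The key structural difference is that $\kk$ need no longer have constant length, so $p=g(\kk,\kk)$ now enters the volume-form coefficient and the coordinate volume-form coefficient in a nontrivial way. Concretely, I would first recompute the symplectic form $\om=d(f\kb)$ using \Ref{decom}; since the assumptions again give $g\big|_\HH=\pi^*h$ (so $d\kb\big|_\HH=-\ii r\,dx\we dy$) the horizontal piece is unchanged, but the value of $d\kb$ on $\VV$ is no longer zero. Using Remark~\ref{LIK} in the Killing case, together with $g(\kk,\tT)=0$, $[\kk,\tT]=0$ and the hypothesis that $p$, $q$, $\ell$, $\ii$ are functions of $\ta$, one gets $\om(\kk,\tT)=\gK(\kk,\kk)$, and one computes $d_\tT(g(\kk,\kk))=p'\,d_\tT\ta$ with $G/\ell = -pq/\ell$... more precisely $G=-pq$ (since $g(\kk,\tT)=0$), and $d_\tT\ta=g(\tT,\tT)/\ell=q/\ell$, so $G/\ell = -pq/\ell = -p\,d_\tT\ta\cdot q/(d_\tT\ta\,\ell)$; carrying this through should produce the coefficient $f'+f\,p'/p$ (up to the factor $G/\ell$) appearing in the statement. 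Thus $\mathrm{Vol}=\om^2/2$ acquires the coefficient $\mu = -f\,(f'+f\,p'/p)\,r\,\ii\,p$ (times $d\ta\we\kb\we dx\we dy$), explaining the extra factor of $p$ relative to Proposition~\ref{rc-scal1}.

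Next I would recompute $\nu$, the coordinate volume-form coefficient. As in Proposition~\ref{rc-scal1}, Proposition~\ref{holo} (applicable since $\kk$, $\tT$ commute and are shear-free, with $J$ integrable) gives that $\Xi:=\kk-i\tT$ is holomorphic and $\psi:=\kb+i\tT^\flat$ is a holomorphic $1$-form; $\tT^\flat=\ell\,d\ta$ is exact because $\ell$ is a function of $\ta$, and the hypothesis on $\kb$ (locally exact plus a form vanishing on $\VV$) lets one again extract the coordinate coefficient. Here, however, $\psi(\Xi)=g(\kk,\kk)+g(\tT,\tT)+i(\dots)$ no longer reduces to a constant because $p\ne -q$ in general — so one expects $\nu=\ell/p$ or $\nu = \ell$ up to a factor involving $p$, and in any case the net effect is that $\log(\mu/\nu) = \log(-f(f'+f\,p'/p)r\ii p/\ell)$, yielding the stated Ricci form via \Ref{ric}. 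The main obstacle I anticipate is precisely this step: correctly tracking how $p$ (not being constant) distributes between $\mu$ and $\nu$, and confirming it lands in the numerator of $\mu/\nu$ as written. One must be careful that $\psi$ evaluated on $\Xi$ contributes $p+q$ which is not constant, and decide whether the discrepancy goes into $\nu$ or is absorbed by rescaling $\Xi$; the cleanest route is to compute $\psi\we dz$ directly in coordinates and read off $\nu$, rather than argue abstractly.

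For the scalar curvature I would follow the same device as before: write $Jd\ta = a\kb + b\,d\ta$, solve the $2\times2$ linear system by pairing with $\kk$ and $\tT$, and use that $g$ has the values $p$, $0$, $q$ on pairs from $\{\kk,\tT\}$ to conclude $b$ is constant while $a=-q/(p\ell)$ is a function of $\ta$ (this is the stated value of $a$). Then decompose the Ricci form as $\rho_K = -\tfrac12 d[(\log(f(f'+f\,p'/p)p/\ell))'(a\kb+b\,d\ta)] - \tfrac12 dJd\log(-r\ii)$; write $P=-\tfrac12\log(-f(f'+f\,p'/p)p\ii/\ell)$ so that the log-derivative term is $P'$ plus a piece from $\log(-r\ii)$, which is why the second displayed term of \Ref{scal1} is $-\tfrac12\ast[\om\we dJd\log r]$ (the $\ii$ has migrated into $P$). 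Wedging with $\om$, the $d[(\dots)'b\,d\ta]$ term contributes nothing (as $d\ta\we d\ta=0$ after expanding $\om$ via \Ref{decom}), and the $d[(P')\,a\kb]$ term produces $\tfrac12((P')a f)'$ times $r\ii\,d\ta\we\kb\we dx\we dy$ plus $\tfrac12 P'a f\cdot d\kb$ paired appropriately; dividing by the volume coefficient $f(f'+f\,p'/p)r\ii p$ and using $\ast\mathrm{Vol}=1$ gives the first term of \Ref{scal1}, namely $((faP')' + 2faP'p'/p)/(f(f'+f\,p'/p))$ — the extra $2faP'p'/p$ arising from the second ($d\kb$) contribution combined with the $p$ in the denominator. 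I expect the bookkeeping between the $((faP')')$ piece and the $d\kb$-piece, and reconciling the factor $p$ in the volume form with the $p'/p$ terms, to be the most error-prone part; I would do that computation in the explicit coordinate frame $d\ta,\kb,dx,dy$ to keep it mechanical.
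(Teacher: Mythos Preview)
Your proposal is correct and takes exactly the approach the paper indicates: the paper omits the proof entirely, stating only that it ``is somewhat more complex than the previous one but follows the same outline,'' which is precisely what you do. One small correction to your narrative: the extra factor of $p$ actually enters through $\nu$ (one finds $\nu=\ell/p$ rather than $\nu=\ell$, since $\psi(\Xi)$ is no longer constant), not through $\mu$; but you explicitly flag this distribution of $p$ between $\mu$ and $\nu$ as the delicate step, and since only $\mu/\nu$ matters for \Ref{ric} the outcome is unaffected.
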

We omit the proof, which is somewhat more complex than the previous one but follows
the same outline.

\section{Admissible Lorentzian metrics inducing SKR metrics}\lb{skr}

In this section we begin the study of examples.
We first recall the description of a special type of K\"ahler metric, called SKR, also known
as a metric admitting a special K\"ahler-Ricci potential. These include many
conformally-Einstein K\"ahler metrics. We then produce from such a metric, via an explicit ansatz,
Lorentzian admissible metrics admitting a Killing field, which in some cases is also geodesic of constant length. These Lorentzian metrics in turn induce the initial SKR metric.

\vspace{.1in}
A Killing potential $\ta$ on a K\"ahler manifold $(M,J,\gK)$ is, by definition, a smooth function $\ta$
such that $J\nK\ta$ is a Killing vector field, where $\nK$ denotes the Levi-Civita connection
(or the gradient) with respect to the K\"ahler metric.
We set
\[
v:=\nK\ta,\quad u:=Jv,\quad \VV:=\mathrm{span}(v,u),\quad \HH:=\VV^\perp.
\]
This potential $\ta$ is called a special K\"ahler-Ricci potential, and $\gK$ an SKR metric,
if $\ta$ is nonconstant, and at each regular point of $\ta$, the nonzero tangent
vectors in $\HH$ are eigenvectors of both the Ricci endomorphism and the Hessian of $\ta$.
We will often denote such metrics by $\gS$. As mentioned above, they include many K\"ahler conformally
Einstein metrics in dimension four  (and all of those, in higher dimensions).

\vspace{.1in}
Theorem 18.1 in \cite{dm1} gives the local classification of SKR metrics. It states
that for any SKR metric, every regular point of $\ta$ has a neighborhood $U$
which is the domain of a biholomorphic isometry $\Psi$ to an open set in a holomorphic line bundle over a K\"ahler manifold $(N, h)$ with K\"ahler form $\om^h$, equipped with the following metric, still denoted $\gS$. There are in fact two metric forms, but we only give one, which we will call
the irreducible form, as it describes metrics which are not K\"ahler local products. It is
given as follows.
\be\lb{skr-form}
\text{$\gS$ is\qquad $\fr 1Qd\ta^2+\fr Q{a^2}{\hat{u}}^2$ on $\VV$, \quad  $2|\tc|\pi^*h$ on $\HH$.}
\end{equation}
where $\tc:=\ta-c$ is as in Section \ref{sec:prop} with $c$ a constant, $\ta$ is the push-forward
of the Killing potential under the above biholomorphism, $a\ne 0$ is a constant,
$Q$ is a function of $\ta$ which equals $\gS(v,v)=\gS(u,u)$, $\pi$ is the projection map from the
line bundle to $N$, $\hat{u}$ is the one-form having value $a$ on
$u$ and zero on $v$ and on lifts of vector fields on $N$, and $\VV$, $\HH$ are also obtained via
pushing-forward the same-named distributions via the biholomorphism, with $\VV$ being also the vertical distribution of the line bundle. In addition, $\HH$ is the horizontal distribution for
a Chern connection on the line bundle.

\vspace{.1in}
If $M$ is compact and not biholomorphic to $\mathbb{CP}^m$, it follows from \cite{dm2}
that a biholomorphic isometry $\Psi$ as above exists, with domain $M$, mapping onto a
$\mathbb{CP}^1$-bundle over a K\"ahler manifold equipped with a canonical model metric.
Furthermore, $\Psi$ maps the non-critical set of $\ta$ onto the total space of a
line bundle minus its zero section. Finally, in the irreducible case the model metric still has
the form \Ref{skr-form} on this subset of the total space.

\vspace{.1in}
We record some known relations for SKR metrics, all immediate from or appearing in \cite{dm1},
some of which will be employed below. In these, $w$, $w'$ denote horizontal lifts of vector fields
on the base manifold $N$.
\be\lb{SKR-rel}
\begin{aligned}
&i)\, \gS(u,v)=0,\qquad ii)\, \text{$Q>0$ if $v\ne 0$ or $u\ne 0$,}\qquad iii)\, [u,v]=0,\\
&iv)\, [v,w]=0,\qquad v)\, [u,w]=0,\qquad vi)\, [w,w']^\VV=-2\pi^*\om^h(w,w')u.\\
\end{aligned}
\end{equation}

Given an SKR metric in dimension four of the form \Ref{skr-form}, our
main objective is to obtain a procedure for finding an admissible Lorentzian
metric $g$, 
defined on an appropriate subset $U$, with an induced K\"ahler metric satisfying
$\gK=\gS$ on $U$.

\vskip 6pt
The construction is as follows. Set $\kk:=u$, $\tT:=-v$.
Fix two of the three values of the metric on $\kk$, $\tT$
as follows: $g(\kk,\tT):=0$, while $g(\tT, \tT):=q$ is defined to be
an arbitrarily chosen negative constant.
Choose $p$ to be a function of a variable $\ta$ which is positive
on $\{\ta>c\}$.
Set $g(\kk,\kk):=p$, with $p$
now abusively denoting $p\circ\ta$. Define $g\big|_\VV$ by linear
extension. Then define $g\big|_\HH:=\pi^*h$. Finally, declare $g(\VV,\HH)=0$.

In the following theorem we will take the domain of the biholomorphic
isometry $\Psi$ to be the entire manifold under discussion, with the
range contained in the total space of the line bundle. Our theorem is
then stated as follows.
\begin{thm}\lb{SKRthm}
Let $\gS$ be an irreducible SKR metric on a complex manifold $(M,J)$
of real dimension four with special K\"ahler-Ricci potential $\ta$,
such that the biholomorphic isometry $\Psi$ has domain $M$.
Then there exists a Lorentzian metric on
$U:=\{d\ta\ne 0\}\cap\{\ta>c\}$,
which is isometric, via $\Psi$, to an admissible metric $g$ among those in the
ansatz just described, whose distinguished vector fields are $\kk$ and $\tT$. 

Choosing $f(\ta):=\tc/p$, the metric $g$ along with $f$ induce a
K\"ahler metric $\gK$ on the line bundle, 
whose isometric copy in $M$ (also denoted $\gK$) is defined on $U$ and satisfies
\[
\text{$\gK=\gS$ on $U$.}
\]
If $M$ is compact and not biholomorphic to $\mathbb{CP}^2$,
then (after perhaps switching the sign of $\ta$) this
Lorentzian metric is in fact defined on the set $U := \{d\ta \ne 0\}$,
which is open and dense in $M$.
\end{thm}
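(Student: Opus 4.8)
The plan is to verify directly that the metric $g$ produced by the ansatz satisfies all the admissibility conditions of Definition~\ref{adms}, and then to check, via Theorem~\ref{ad-gen} and Lemma~\ref{basic}, that the induced K\"ahler metric $\gK$ reproduces $\gS$ on the prescribed region. First I would set $\kk:=u=J\nK\ta$ and $\tT:=-v=-\nK\ta$, transport the SKR data to the line bundle via $\Psi$, and record that on the range of $\Psi$ the relations \Ref{SKR-rel} hold. Since $\VV=\mathrm{span}(u,v)$ is $\gS$-definite and $\HH$ inherits the orientation of $N$, the endomorphism $J$ built from $g$ as in subsection~\ref{defJ} agrees with the ambient complex structure on $\VV$; on $\HH$ the two agree because $g\big|_\HH=\pi^*h$ is a constant multiple of $\gS\big|_\HH$, so $J$ is an honest complex structure and \Ref{nnsing}, \Ref{space} hold by construction (here the choices $q<0$, $p>0$ on $\{\ta>c\}$ are what make $\VV$ timelike, i.e. \Ref{G}). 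Conditions ii]--iii] of Definition~\ref{adms} are immediate: $\tT=-\nK\ta=\ell\n\ta$ for a suitable $\ell$ (computed from $Q$ and $p$, $q$), and $g(\kk,\tT)=0$, $g(\kk,\kk)=p\circ\ta$ are functions of $\ta$, hence have gradient in $\VV$ by Proposition~\ref{ver-grad} (applied to $g$, once one knows $\ta$ is still the relevant function).

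For the integrability condition i] of Definition~\ref{adms}, I would invoke Theorem~\ref{thm:KerrNUT} in its split-adjoint form — applicable since $g(\kk,\kk)=p>0$ and $g(\tT,\tT)=q<0$ have opposite signs, so $J\big|_\VV$ is self-adjoint (Remark~\ref{nonttrd}). Condition ii) of that theorem, the shear relation, holds because $\kk=u$ and $\tT=-v$ are shear-free: the bracket of $u$ or $v$ with a horizontal lift $w$ is horizontal by \Ref{SKR-rel}iv)--v), so by the remark following \Ref{eqn:twist0} both relative shear operators vanish. Condition i) of Theorem~\ref{thm:KerrNUT} then follows from the geometric criteria of subsection~\ref{geo-cond}: $g([\kk,\cdot],\kk)=0$ and $g([\tT,\cdot],\tT)=0$ on $\HH$ because both vector fields have constant length (Remark~\ref{cond-i}a, using $Q>0$); the cross term $g([\kk,\cdot],\tT)=0$ follows from \Ref{near-grad} since $\tT=\ell\n\ta$ with $\n\ell\in\Gamma(\VV)$ and $g(\kk,\tT)=0$; and the fourth term, $g([\tT,\cdot],\kk)$, vanishes on $\HH$ by \Ref{semRm}, using $[\kk,\tT]=[u,v]=0$ from \Ref{SKR-rel}iii) together with the near-gradient property. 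This establishes that $g$ is admissible.

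Next I would identify the symplectic form and the induced K\"ahler metric. With $f(\ta)=\tc/p$ one has $\om=d(f(\ta)\kk^\flat)=f'd\ta\we\kk^\flat+f\,d\kk^\flat$; using $\kk^\flat=g(u,\cdot)$, which on $\VV$ is a multiple of $\hat u$ and vanishes on $\HH$, and the horizontal-bracket relation \Ref{SKR-rel}vi) to evaluate $d\kk^\flat$ on $\HH$, one checks that on $\HH$ the twist is $\ii=-g(u,[\,\cdot\,,\cdot\,])$, a function of $\ta$ times the pull-back of $\om^h$, so that by Lemma~\ref{basic} $\gK\big|_\HH=-f\ii\,g\big|_\HH$ reproduces $2|\tc|\pi^*h=\gS\big|_\HH$ after substituting $f=\tc/p$ and the explicit form of $\ii$. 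On $\VV$, Lemma~\ref{basic} gives $\gK(\kk,\tT)=0=\gS(u,v)$ and $\gK(\kk,\kk)=\gK(\tT,\tT)=fd\kk^\flat(\kk,\tT)-f'G/\ell$; a direct computation with $f=\tc/p$, $g(\kk,\kk)=p$, $g(\tT,\tT)=q$ and the value of $\ell$ shows this equals $Q=\gS(u,u)=\gS(v,v)$. Matching these expressions requires $Q$ to be exactly the function of $\ta$ that the classification \Ref{skr-form} prescribes, and the region where the taming inequalities \Ref{ineq} hold is precisely $U=\{d\ta\ne0\}\cap\{\ta>c\}$ (on $\{\ta>c\}$ one has $f=\tc/p>0$ and $\ii<0$ with the correct orientation, giving $f\ii<0$; the second inequality reduces, since $\kk$ has constant length only in the reducible-free situation — otherwise use the Killing or pre-geodesic variant of Remark~\ref{LIK} — to the positivity of $Q$, which holds off the zero section). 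Finally, for the compact case I would cite the global form of the classification from \cite{dm2}: $\Psi$ extends to all of $M$ onto a $\mathbb{CP}^1$-bundle, the non-critical set maps onto the line bundle minus its zero section, and the model metric retains the form \Ref{skr-form} there; possibly after replacing $\ta$ by $-\ta$ (to arrange $\ta>c$ on the relevant component) the construction goes through on the dense open set $\{d\ta\ne0\}$. The main obstacle I anticipate is the bookkeeping in the last display-matching step — keeping straight the constant $a$, the sign conventions on $\ii$ forced by Remark~\ref{iota}, and verifying that the specific $Q(\ta)$ from \Ref{skr-form} is consistent with the value of $\gK(\kk,\kk)$ forced by an \emph{arbitrary} admissible choice of $p$ and $q$, i.e. showing the ansatz is genuinely solvable rather than over-determined.
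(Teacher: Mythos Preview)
Your overall architecture matches the paper's: verify admissibility of the ansatz metric $g$, then identify $\gK$ with $\gS$ using Lemma~\ref{basic} and the explicit SKR form \Ref{skr-form}. The matching step and the compact-case argument are essentially as in the paper.

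However, your verification of integrability takes an unnecessarily circuitous route and contains a genuine error. You invoke Remark~\ref{cond-i}a) on the grounds that ``both vector fields have constant length'', but $g(\kk,\kk)=p$ is a \emph{function of $\ta$}, not constant (the ansatz explicitly allows arbitrary positive $p(\ta)$ on $\{\ta>c\}$); you also have not shown that $\kk$ or $\tT$ are $g$-pre-geodesic, which that criterion requires. Your parenthetical ``using $Q>0$'' conflates the $\gS$-length with the $g$-length. The paper bypasses subsection~\ref{geo-cond} and Theorem~\ref{thm:KerrNUT} entirely: since horizontal lifts $w$ of base vector fields span $\HH$ and \Ref{SKR-rel}iv)--v) give $[\kk,w]=[u,w]=0$ and $[\tT,w]=-[v,w]=0$ identically, condition \Ref{Nij}i) of Theorem~\ref{integ} holds trivially (the brackets are zero, hence in $\Gamma(\HH)$), and Theorem~\ref{integ} applies directly. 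Shear-freeness is then checked by the Koszul formula, using that all relevant $g$-values on the frame $\{\kk,\tT,\xx,\yy\}$ are constant and the brackets of $\kk,\tT$ with $\xx,\yy$ vanish.

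Two smaller points. Your appeal to Proposition~\ref{ver-grad} for iii] of Definition~\ref{adms} runs in the wrong direction (that proposition deduces ``function of $\ta$'' from ``vertical gradient'', not the converse) and is circular, since it presupposes admissibility. The paper instead shows directly that $d_\xx p = p'\,d_\xx\ta = p'\,g(\xx,\tT)/\ell = 0$ for $\xx\in\Gamma(\HH)$. For ii] the paper fixes $\ell=-q/Q$ explicitly and verifies $\tT=\ell\n\ta$ component by component; this is not cosmetic, because the identification $\gK(\kk,\kk)=Q$ in the final matching uses precisely $(q/\ell)(f'p+fp')=-Q(fp)'=-Q$ via $f=\tc/p$, and that cancellation is what dissolves the apparent over-determination you flag at the end.
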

\begin{proof}
We identify $U$ from now on with its image in the line bundle.
It is easy to see that $g$ is Lorentzian and conditions
\Ref{nnsing} and \Ref{space} hold.



\vspace{.08in}
Conditions \Ref{Nij}i) follow immediately from \Ref{SKR-rel}iv),v),
the definition of $\kk$, $\tT$ and the fact that in the SKR setting,
horizontal lifts of base vector fields span $\HH$.

\vspace{.08in}
Condition ii) of \Ref{Nij} follows as the Koszul formula,
applied to the expressions in \Ref{eqn:shear2}, shows that
$\tT$ and $\kk$ are shear-free.

\vspace{.08in}
Having checked i] of Definition \ref{adms} of admissibility, we now
turn to ii]. 
First, note that setting $\ell:=-q/Q$ is well defined on $U$
as $Q$ is nonzero there.
Now define $\tilde{\tT}=\tT/\ell$ (note that $\ell\ne 0$).
We wish to show $\tilde{\tT}=\n\ta$. The values of $g$ on the pairing of either of these
vector fields with $\xx$, $\yy$, $\kk$ is zero, since, for example
$g(\tilde{\tT},\xx)=g(\tT,\xx)/\ell=0=d_{\xx}\ta=g(\xx,\n\ta)$, and similarly
$g(\tilde{\tT},\kk)=0$, while we have seen that $0=d_\kk\ta=g(\kk,\n\ta)$. Their values
when paired with $\tT$ are compared as follows: $g(\tilde{\tT},\tT)=q/\ell$ while
$g(\n\ta,\tT)=d\ta(\tT)=\gS(\n^{\scriptscriptstyle{S\!K\!R}}\ta,\tT)=\gS(v,\tT)=\gS(v,-v)= -Q=q/\ell$. This shows $\tT=\ell\n\ta$.


\vspace{.08in}
To complete the proof that $g$ is admissible we need to verify iii] of Definition~\ref{adms}.
Now $g(\kk,\tT)$ is constant by the very definition of $g$. Also,
$\n(g(\kk,\kk))\in\Gamma(\VV)$. This follows because $\n\kk$ defines a map
$\HH\to\kk^\perp$, as can be seen since for a vector field $\xx$ with values
in $\HH$, $2g(\n_{\!\xx}\kk,\kk)=d_{\xx}(g(\kk,\kk))=d_{\xx}p=p'd_{\xx}\ta=0$.

\vspace{.1in}
Next, we check that with $f$ given in the theorem, the induced $\gK$ is
a K\"ahler metric on $U$. Note first that $q/\ell=-Q<0$ on $U$, while our $f$
is well defined and positive on $U$ and satisfies $f'p+f p'=(fp)'=\tc'=1$.
Next, from \Ref{SKR-rel}iii) we see that $[\kk,\tT]=0$, so that
minus the expression \Ref{alt}, which is just the left hand side of \Ref{ineq}
as $g(\kk,\tT)=0$, evaluates to $f'G/\ell+f d_\tT(g(\kk,\kk))=(q/\ell)(f'p+f p')=-Q$,
which is negative on $U$.
Also, applying \Ref{SKR-rel}vi) and $\kk=u$, we see that for the usual horizontal
lifts $\xx$, $\yy\!\!=\!\!J\xx$ we have
$\ii=g(\kk,[\xx,\yy])=-2\pi^*\om^h(\xx,\yy)p=-2\pi^*h(\yy,\yy)p=-2p<0$ on $U$. Hence
$f\ii<0$ on $U$. Thus, by Theorem~\ref{ad-gen}, $g$ induces
a metric $\gK$ on $U$ which is K\"ahler with respect to $J$, as clearly $J=\J$.

\vspace{.08in}
We now show that $\gK=\gS$ on $U$. In fact, $\gK(\tT,\tT)=\gK(\kk,\kk)=-(f'G/\ell+f d_\tT p)=Q$
as we have just seen, i.e. this value is equal to $\gS(-v,-v)=\gS(u,u)$, while $\gK(\kk,\tT)=0=\gS(u,-v)$.
Thus $\gK\big|_\VV=\gS\big|_\VV$. On the other hand,
$\gK\big|_\HH=-f\ii g\big|_\HH=-f(-2p)\pi^*h=2\tc\pi^*h=2|\tc|\pi^*h=\gS\big|_\HH$ on $U$.
As $\gK(\VV,\HH)=0=\gS(\VV,\HH)$, the first equality following from Lemma \ref{basic},
the two K\"ahler metrics indeed coincide.

\vspace{.08in}
Finally, if $M$ is compact and not biholomorphic to $\mathbb{CP}^2$, we
know that the non-critical set of $\ta$ is mapped via $\Psi$ onto the line bundle minus
its zero section, and we wish to show the former set is $U$.
It was shown in \cite{dm2} that for an SKR metric on a compact manifold, the
range of $\ta$ is a closed interval and $c$ is not in its interior.
As choosing the sign of $\ta$ determines that of $c$, one can thus
always arrange, when the manifold is compact, that $\ta>c$ everywhere. Then,
from $U$'s definition
it follows that $U$ is exactly
the non-critical set of $\ta$.
As $\ta$ is a Killing potential, it is
known that the latter set is open and dense in $M$ (cf. \cite{dm2}).

\vspace{.08in}
This completes the proof.
\end{proof}
Note that it is possible to show that $\kk$ is $g$-Killing and,
if $p$ is a positive constant, also $g$-geodesic, while $\tT$ is always
$g$-geodesic.

\vspace{.08in}
It is worth mentioning that one can modify the ansatz slightly so that
the Lorentzian metric will admit a
{\em timelike Killing field}.


\section{K\"ahler metrics induced by Lorentzian warped products}
\label{warp}

\subsection{The construction}
In this section we construct K\"ahler 4-manifolds from \emph{admissible} Lorentzian 4-manifolds, in the sense of Definition \ref{adms}, which are warped products.

\begin{thm}
\label{thm:warped}
Let $(N,\bar{g})$ be a Riemannian 3-manifold with a unit length vector field $\bar{\kk}$, whose flow is geodesic, shear-free, and has a nowhere vanishing twist function.  Let $w(t)$ be a smooth positive function on $\RR$ satisfying $w'/w > -1$. Then
$(\RR \times N,g,\kk,\nabla t)$ is admissible
with respect to a chosen orientation, where $g$
is the Lorentzian warped product
$$
g := -dt^2 + w^2\bar{g}
$$
and $\kk:=\partial_t + \bar{\kk}/w$. The metric $g$ then
induces a K\"ahler metric on $\RR \times N$.
\end{thm}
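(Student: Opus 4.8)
The plan is to verify, in order, the three conditions of Definition~\ref{adms}, at which point Theorem~\ref{ad-gen} (together with Theorem~\ref{integ}, whose hypotheses are subsumed in admissibility) immediately yields the induced K\"ahler metric. First I would fix the splitting $\VV=\mathrm{span}(\kk,\n t)$, $\HH=\VV^\perp$, and identify $\HH$: since $\kk=\partial_t+\bar\kk/w$ and $\n t=-\partial_t$ (as $g(\n t,\cdot)=-dt$), one computes $g(\kk,\kk)=-1+1=0$ so $\kk$ is null, $g(\kk,\n t)=1$, and $g(\n t,\n t)=-1$; hence $A$ of \Ref{nnsing1} is nonsingular with $G=\det A=-1<0$, $\kk$ and $\n t$ are pointwise independent, and $\VV$ is timelike, so \Ref{nnsing} and \Ref{space}/\Ref{time} hold. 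A direct check shows $\HH$ consists of the $w$-rescaled lifts of vectors in $\bar\kk^\perp\subset TN$, i.e. $\HH=\{X\in TN: \bar g(\bar\kk,X)=0\}$ viewed inside $T(\RR\times N)$, which will let me transfer the optical data of $\bar\kk$ on $(N,\bar g)$ to $\kk$ on $(\RR\times N,g)$.

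Next I would establish condition i] of Definition~\ref{adms}, namely that $J=J_{g,\kk,\n t}$ is an admissible complex structure; by Theorem~\ref{integ} it suffices to check \Ref{Nij}i) and \Ref{Nij}ii). For \Ref{Nij}ii) (shear-freeness giving $J\n^o\kk=\n^o\!J\kk$): $\n t=-\partial_t$ is $g$-parallel along $\HH$ up to terms that make it shear-free — more simply, $\n^o(\n t)=0$ since $\partial_t$ is a gradient of constant length and the bracket of $\partial_t$ with a horizontal field is horizontal — and the shear of $\kk$ is governed by the shear of $\bar\kk$, which vanishes by hypothesis; a warped-product computation of $\n_X\kk$ for $X\in\HH$, using that the warped-product Levi-Civita connection differs from $\bar\n$ only by terms proportional to $\partial_t$ and $X$, shows $\n^o\kk$ equals the (zero) relative shear of $\bar\kk$. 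Hence both $\kk$ and $J\kk$ (a combination of $\kk$ and $\n t$) are shear-free, and \Ref{Nij}ii) holds by Remark~\ref{doubleJ}-type reasoning or directly. For \Ref{Nij}i), I would use the equivalent form \Ref{alter}: $g([\kk,X],\kk)=0$ and $g([\n t,X],\n t)=0$ and the two mixed conditions, for $X\in\HH$. Here I invoke Remark~\ref{cond-i}: $\kk$ is null with geodesic (or at least pre-geodesic) flow in this warped product — this needs a short computation that $\n_\kk\kk$ is proportional to $\kk$, using geodesy of $\bar\kk$ and the warping — giving $g([\kk,\cdot],\kk)=0$; $\n t=-\partial_t$ has constant length, giving $g([\n t,\cdot],\n t)=0$; $g(\kk,\n t)=1$ is constant with gradient $0\in\VV$ and $\n t$ is a gradient, so the pair conditions \Ref{near-grad} apply and yield $g([\kk,\cdot],\n t)=0$; the fourth condition $g([\n t,\cdot],\kk)=0$ follows because $[\partial_t,X]$ is horizontal for $X$ a (possibly $t$-dependent) horizontal lift — here one must be slightly careful that $\HH$ is $\partial_t$-invariant as a distribution, which it is since horizontality is the $t$-independent condition $\bar g(\bar\kk,\cdot)=0$.

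Then conditions ii] and iii] are short: $\n t=\n t$ with $\tT=\n t$, so taking $\ell\equiv1$ and $\ta:=t$ gives ii] with $\tT=\ell\n\ta$; and iii] asks $\n(g(\kk,\n t)),\n(g(\kk,\kk))\in\Gamma(\VV)$, which is automatic since $g(\kk,\n t)=1$ and $g(\kk,\kk)=0$ are constants. Having verified Definition~\ref{adms}, admissibility of $(\RR\times N,g,\kk,\n t)$ is established, and Theorem~\ref{ad-gen} produces a K\"ahler metric $\gK=d(f(t)\kk^\flat)(\cdot,J\cdot)$ on the region where \Ref{ineq} holds. The final task is to show this region is all of $\RR\times N$ for a suitable $f$: by Lemma~\ref{lemma:Ray} (or directly from the hypothesis that $\bar\kk$ has nowhere-vanishing twist function) the twist function $\ii$ of $\kk$ is nowhere zero — indeed one computes $\ii=g(\kk,[\xx,\yy])$ for an oriented orthonormal frame $\xx,\yy$ of $\HH$ and relates it to $\bar\ii^{\bar\kk}$ scaled by a power of $w$, so it never vanishes; choosing $f$ of a fixed sign making $f\ii<0$, and then checking $f'G/\ell-f\,d\kk^\flat(\kk,\n t)<0$, which by Remark~\ref{LIK} (the strictly-pre-geodesic null case, or geodesic case) reduces to a condition on $f'$, $w$ and $w'/w$ — this is exactly where the hypothesis $w'/w>-1$ enters. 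I expect the main obstacle to be this last step: carefully computing $\n_\kk\kk$ and $d\kk^\flat(\kk,\n t)$ in the warped product to pin down whether $\kk$ is geodesic or merely strictly pre-geodesic (with explicit $\al$ in terms of $w'/w$), and then exhibiting a single $f$ (e.g. affine in $t$, or $e^t$) for which both inequalities in \Ref{ineq} hold globally — the inequality $w'/w>-1$ should be precisely what guarantees the second inequality can be satisfied everywhere.
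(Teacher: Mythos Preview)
Your proposal is correct and follows essentially the same route as the paper: verify Definition~\ref{adms} via Theorem~\ref{integ} (using Remark~\ref{cond-i} for \Ref{alter}), then apply Theorem~\ref{ad-gen} with $f(t)=e^t$, so that the strictly pre-geodesic null case of Remark~\ref{LIK} (with $\alpha=w'/w$, $G=-1$, $\ell=1$, $g(\kk,\tT)=1$) reduces the second inequality in \Ref{ineq} exactly to $w'/w>-1$. The only minor deviation is that for the fourth relation in \Ref{alter} the paper checks condition~\Ref{semRm} (via $\nabla_\kk\tT=-\tfrac{w'}{w^2}\bar\kk$ and $\nabla_\tT\kk=0$), whereas you argue directly that $[\partial_t,\Gamma(\HH)]\subset\Gamma(\HH)$; both work, and the paper fixes the orientation of $\HH$ so that $\bar\iota<0$, giving $\iota=\bar\iota/w<0$ and hence $f\iota<0$ globally.
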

Note that in the expression for $\kk$, the notation $\bar{\kk}$ refers to the obvious lift
of this vector field from $N$ to $\RR \times N$.
\begin{proof}
We verify the admissible properties of Definition \ref{adms}.
To begin with, our vector field $\kk$ is clearly $g$-null, and pre-geodesic:
\be
\cd{\kk}{\kk} = \frac{w'}{w}\,\kk.\label{eqn:kpreg}
\end{equation}
(For the properties satisfied by the Levi-Civita connection of warped products, see \cite[Proposition 35, p. 206]{o1983}.)
Next, set $\tT := \nabla t = -\partial_t$ and observe that
\[
g(\tT,\tT) = -1,\quad \cd{\tT}{\tT} = 0,\quad g(\kk,\tT) = \kk(t) = 1.
\]
We now establish relations between the shear, and later also twist functions of $\kk$ and $\bar{\kk}$. In what follows, the shear coefficients and twist function (up to sign) of $\bar{\kk}$ will be denoted by $\bar{\sigma}_1,\bar{\sigma}_2$ and $\bar{\ii}$, respectively, while those of $\kk$ will be denoted by $\sigma_1,\sigma_2$ and $\ii$.

Let $\{\bar{\xx},\bar{\yy}\}$ be an ordered local $\bar{g}$-orthonormal frame of $\bar{\kk}^{\perp_{\bar{g}}} \subset TN$, whose ordering is chosen so that $\bar{\ii}$ is {\em negative}.
Lifting these vector fields trivially to $\RR \times N$, the vector fields $\xx := \bar{\xx}/w$ and $\yy := \bar{\yy}/w$ form a  $g$-orthonormal frame for the (spacelike) distribution $\HH = \mathrm{span}(\kk,\tT)^{\perp_g}$. Furthermore, $\cd{\xx}{\kk}=\frac{1}{w^2}\big(w'\bar{\xx} + \overline{\nabla}_{\!\bar{\xx}}{\bar{\kk}}\big)$, $\cd{\yy}{\kk} = \frac{1}{w^2}\big(w'\bar{\yy} + \overline{\nabla}_{\!\bar{\yy}}{\bar{\kk}}\big)$,
where $\overline{\nabla}$ is the Levi-Civita connection of $\bar{g}$.  That $\kk$ is shear-free now follows from this and the fact that $\bar{\kk}$ is shear-free in $(N,\bar{g})$:
\be
2\sigma_1=g(\nabla_{\!\yy}\kk,\yy) - g(\nabla_{\!\yy}\kk,\xx)=\frac{2}{w}\,\bar{\sigma}_1=0,\nonumber
\end{equation}
and a similar relation for $\sigma_2$.
It is likewise verified that $\tT$ is also shear-free, since $\cd{\xx}{\tT} = -(w'/w)\xx$ and $\cd{\yy}{\tT} = -(w'/w)\yy$. Thus, the almost complex structure $J := J_{g,\kk,\tT}$
compatible with the orientation for which
$J\xx:=\yy, J\yy:=-\xx$,
satisfies the shear condition
$J\nabla^o\kk = \nabla^o\tT=0$.

Furthermore, as $\cd{\kk}{\tT} = -\frac{w'}{w^2} \bar{\kk}$ and $\cd{\tT}{\kk} = 0$, condition \eqref{semRm} also holds.
Thus all conditions of subsection~\ref{geo-cond} hold, so that
\Ref{alter} and therefore \Ref{Nij}i) hold. Thus by Theorem \ref{integ}, $J$ is integrable.
Also all conditions in Definition \ref{adms} hold, so $(\RR \times N,g,\kk,\tT)$ is therefore admissible.

To complete the proof, we apply Theorem \ref{ad-gen} to show that $g$ induces a K\"ahler metric on $\RR \times N$. Recalling the definition of $\ii$ in \eqref{eqn:twist0}, observe that (up to sign), $\ii$ of $\kk$ in $(\RR \times N,g)$ is related to that of $\bar{\kk}$ in $(N,\bar{g})$ by
\begin{equation}\lb{warped-twist}
\ii = g(\nabla_{\!\yy}\kk,\xx) - g(\nabla_{\!\xx}\kk,\yy)=\frac{1}{w}\,\bar{\ii} < 0.
\end{equation}
Now choose $f(t) = e^t$.  Since $w'/w > -1$ and $G = -\ell = -1$,
\[
f\ii = e^t \ii < 0,\qquad f'G/\ell-f(w'/w) g(\kk,\tT) = -e^t(1+ (w'/w))<0,
\]
so that $\gK = d(e^t\kk^\flat)(\cdot,J\cdot)$ will be a K\"ahler metric on all of $\RR \times N$.
\end{proof}
In~\cite{am2} it is shown that (for other functions $f(t)$) one can produce
K\"ahler-Einstein metrics from this construction.

By Lemma \ref{lemma:Ray}, this result immediately yields:
\begin{cor}
\label{cor:Ric}
Let $(N,\bar{g})$ be a Riemannian 3-manifold and $X$ a unit length vector field whose flow is complete, geodesic, and shear-free.  If \emph{$\text{Ric}_{\bar{g}}(X,X) > 0$}, then $\RR \times N$ admits a K\"ahler metric as in Theorem \ref{thm:warped}.
\end{cor}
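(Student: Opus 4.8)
The plan is to obtain this as an immediate consequence of Theorem~\ref{thm:warped}, with the curvature hypothesis feeding into that theorem through Lemma~\ref{lemma:Ray}. Concretely, I would first check that $\bar{\kk}:=X$ meets all the requirements imposed on $\bar{\kk}$ in Theorem~\ref{thm:warped}. By assumption $X$ has unit length and geodesic, shear-free flow, so the only thing left to verify is that its twist function is nowhere vanishing. This is exactly where Lemma~\ref{lemma:Ray} enters: a complete, unit length, geodesic vector field on a Riemannian $3$-manifold with $\mathrm{Ric}_{\bar g}(X,X)>0$ has $X^\perp$ nowhere integrable, and the lemma records that the latter is equivalent to $X$ being everywhere twisting, i.e. to $|\iiX|$ being nowhere zero. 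Hence all hypotheses on $\bar{\kk}$ in Theorem~\ref{thm:warped} hold.

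Next I would supply a warping function. Theorem~\ref{thm:warped} only needs a smooth positive $w$ on $\RR$ with $w'/w>-1$; the constant function $w\equiv 1$ does the job, since then $w'/w\equiv 0>-1$. Applying Theorem~\ref{thm:warped} with $\bar{\kk}=X$ and this $w$ then gives directly that $(\RR\times N,g,\kk,\nabla t)$ is admissible, where $g=-dt^2+w^2\bar g$ and $\kk=\partial_t+\bar{\kk}/w$, and that $g$ induces a K\"ahler metric on all of $\RR\times N$. Since this is precisely ``a K\"ahler metric as in Theorem~\ref{thm:warped}'', the corollary follows.

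I do not expect any real obstacle: the statement is essentially a restatement of Theorem~\ref{thm:warped} once Lemma~\ref{lemma:Ray} is used to turn the positivity of $\mathrm{Ric}_{\bar g}(X,X)$ into the everywhere-twisting condition. The single point worth a remark is that completeness of $X$ is invoked solely to apply Lemma~\ref{lemma:Ray} (whose proof rests on the Raychaudhuri-type argument of~\cite{hp13}) and plays no further role; in particular no completeness of the induced K\"ahler metric is asserted. One could equally well take $w=e^{ct}$ for any constant $c>-1$, or vary the parameter function $f$, to produce a whole family of such K\"ahler metrics, but for the bare existence claim the constant warping suffices.
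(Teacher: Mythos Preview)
Your proposal is correct and matches the paper's own argument: the corollary is stated there as an immediate consequence of Theorem~\ref{thm:warped} via Lemma~\ref{lemma:Ray}, exactly as you outline. Your explicit choice of $w\equiv 1$ is a harmless addition; the paper simply leaves $w$ arbitrary.
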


We now present some concrete realizations of Theorem \ref{thm:warped}.

\subsection{A K\"ahler metric via the direct product on $\RR \times \SSt$}
Let $\bar{g}$ denote the standard round metric on the 3-sphere $\SSt$.
On $\SSt$, let $\bar{\kk}$ denote the unit length Killing vector field tangent to the Hopf fibration.
By Lemma \ref{lemma:KVF}, the flow of $\bar{\kk}$ is geodesic and shear-free in $(\SSt,\bar{g})$, and its twist function $\ii$ satisfies
$$
\ii^2=2\text{Ric}_{\bar{g}}(\bar{\kk},\bar{\kk})=2.
$$
Applying Theorem \ref{thm:warped} with $w = 1$, $\kk = \partial_t+\bar{\kk}$, and $\tT = \nabla t$, we conclude that $(\RR \times \SSt,-dt^2 \oplus \bar{g},\kk,\tT)$ is admissible and induces a K\"ahler metric on $\RR \times \SSt$. It will turn out that this K\"ahler metric, defined with $f(t)=e^t$, is flat (see \cite{am2}).

\subsection{A K\"ahler metric on de Sitter spacetime}
Four-dimensional \emph{de Sitter spacetime} is the warped product $(\RR \times \SSt_r,-dt^2 + w^2\bar{g})$, where $w(t) = r^2 \cosh^2 (t/r)$ and $r > 0$ is the radius of $\SSt_r$.
This metric is both globally hyperbolic and geodesically complete (see \cite[p. 183-4]{beem96}).
For $r \geq 2$, $w'/w > -1$ on the entire manifold (if $r < 2$, then on an open subset). Thus, with $\kk = \partial_t + \bar{\kk}/w$ and $\tT = \nabla t$, Theorem \ref{thm:warped} applies.

%

\subsection{K\"ahler metrics on $\RR \times \RR^3$}
\label{sec:ppwave}
Using Theorem \ref{thm:warped} once again, a family of K\"ahler manifolds will now be constructed on $\RR^4$ out of the following distinguished class of Lorentzian 4-manifolds:

\begin{defn}
\label{defn:ppwave}
A four-dimensional \emph{standard pp-wave} is the Lorentzian manifold $(\RR^4,g)$ with coordinates $(u,v,x,y)$ and with $g$ given by
\be
\label{eqn:stdpp}
g=H(u,x,y)du \otimes du + du \otimes dv + dv \otimes du + dx \otimes dx + dy \otimes dy,
\end{equation}
for $H$ smooth. If $H$ is quadratic in $x$ and $y$, then $(\RR^4,g)$ is called a
\emph{plane wave}.
\end{defn}

Standard pp-waves originated in gravitational physics and have been intensely studied therein; see, e.g., \cite{Leistner} and \cite{Costa}, as well as \cite{Sanchez} and \cite[Chapter 13]{beem96}.
They are distinguished by the fact that $\partial_v = \nabla u$ is a parallel null vector field.
Now let $k,h\colon \RR^4\lra \RR$ be two smooth functions independent of $v$ and consider the following null vector field $\ZZ$ on $(\RR^4,g)$:
\be
\label{eqn:ell}
\ZZ:=\frac{1}{2}\left(H + k^2 + h^2\right)\!\partial_v - \partial_u + k\partial_x + h\partial_y.
\end{equation}
Observe that $\partial_v$ and $\ZZ$ are pointwise linearly independent, that $g(\partial_v,\ZZ) = -1$, and finally that the pair of vector fields
\be
\label{eqn:xy3}
\xx:=k\partial_v + \partial_x\hspace{.2in},\hspace{.2in}\yy:=h\partial_v + \partial_y
\end{equation}
are orthonormal and span the distribution $\HH = \text{span}(\partial_v, \ZZ)^{\perp}$, which is spacelike. 
Note also that the twist function of $\ZZ$ does not, in general, vanish, as $\ii^{\ZZ}$ is
\beqa
\label{eqn:elltwist0}
g(\ZZ,[\xx,\yy])=g(\ZZ,(-k_y + h_x)\partial_v)=k_y - h_x.
\eeqa
Because $\partial_v = \nabla u$, the hypersurfaces $S_u := \{u = \text{const.}\}$ are integral submanifolds of the orthogonal complement $\partial_v^{\perp} \subset T\RR^4$; because $\partial_v$ is null, it is tangent to these submanifolds.
Now fix any $u_0$ and consider the hypersurface $S_{u_0} \cong \RR^3$ with global coordinates $\{v,x,y\}$.  Set
\be
\label{eqn:kS}
\bar{\kk}:=\partial_v|_{S_{u_0}}\hspace{.2in},\hspace{.2in}
\bar{\xx}:=\xx|_{S_{u_0}}\hspace{.2in},\hspace{.2in}\bar{\yy}:=\yy|_{S_{u_0}}
\end{equation}
and define a Riemannian metric $\bar{g}$ on $S_{u_0}$, by giving its orthonormal coframe $\{\bar{\kk}^{\bar{\flat}},\bar{\xx}^{\bar{\flat}},\bar{\yy}^{\bar{\flat}}\}$, which is $\bar{g}$-dual to $\{\bar{\kk},\bar{\xx},\bar{\yy}\}$. It is given by
\vspace{.05in}
\beqa
\label{eqn:h}
\bar{g}(\bar{\kk},\cdot):=-g(\ZZ,\cdot)\big|_{S_{u_0}}\hspace{.1in},\hspace{.1in}
\bar{g}(\bar{\xx},\cdot):=g(\xx,\cdot)\big|_{S_{u_0}}
\hspace{.1in},\hspace{.1in}\bar{g}(\bar{\yy},\cdot):=g(\yy,\cdot)\big|_{S_{u_0}},
\eeqa
so that $\{\bar{\kk},\bar{\xx},\bar{\yy}\}$ is a global orthonormal frame for $(\RR^3,\bar{g})$.  (This Riemannian metric is derived from a well-known construction; see, e.g., \cite{Leistner}. Note that
it does not depend  on $H$.) We now have:
\begin{prop}
\label{eqn:KVFproof}
On $(\RR^3,\bar{g})$ with $\bar{g}$ given by \eqref{eqn:h},
the vector field $\bar{\kk}$ given in \eqref{eqn:kS} is a unit
length Killing vector field.  If $h_x-k_y$ is nowhere
vanishing and $w$ is a smooth positive function satisfying
$w'/w > -1$, then $(\RR \times \RR^3,-dt^2 + w^2\bar{g},\bar{\kk}/w+\partial_t,\nabla t)$
is admissible and  induces a K\"ahler metric on $\RR^4$.
\end{prop}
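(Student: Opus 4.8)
The plan is to reduce everything to Theorem~\ref{thm:warped}; the only real content is to verify its hypotheses for $N=\RR^{3}$, $\bar{g}$, and $\bar{\kk}$. I would begin by writing $\bar{g}$ explicitly in the coordinates $(v,x,y)$ on $S_{u_{0}}$. Since by \eqref{eqn:h} the coframe $\{\bar{\kk}^{\bar{\flat}},\bar{\xx}^{\bar{\flat}},\bar{\yy}^{\bar{\flat}}\}$ is $\bar{g}$-dual to $\{\bar{\kk},\bar{\xx},\bar{\yy}\}=\{\partial_{v},\,k\partial_{v}+\partial_{x},\,h\partial_{v}+\partial_{y}\}$ — with $k$, $h$ now functions of $(x,y)$ only, being restrictions to $S_{u_{0}}$ — solving the duality relations gives $\bar{\kk}^{\bar{\flat}}=dv-k\,dx-h\,dy$, $\bar{\xx}^{\bar{\flat}}=dx$, $\bar{\yy}^{\bar{\flat}}=dy$, hence
\[
\bar{g}=(dv-k\,dx-h\,dy)^{2}+dx^{2}+dy^{2}.
\]
From $\bar{\kk}^{\bar{\flat}}(\bar{\kk})=1$ and $\bar{\kk}^{\bar{\flat}}(\bar{\xx})=\bar{\kk}^{\bar{\flat}}(\bar{\yy})=0$ one gets $\bar{g}(\bar{\kk},\bar{\kk})=1$, so $\bar{\kk}$ has unit length; and since every coefficient of $\bar{g}$ in these coordinates is independent of $v$, one has $\mathcal{L}_{\partial_{v}}\bar{g}=0$, i.e. $\bar{\kk}=\partial_{v}$ is Killing. (Alternatively this is the standard fact that the Riemannian quotient construction along $\partial_{v}$ on a pp-wave produces a Killing field, but the coordinate computation is the most transparent route.)

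Next I would extract from this what Theorem~\ref{thm:warped} needs. Being a unit-length Killing field on a Riemannian $3$-manifold, $\bar{\kk}$ is automatically geodesic, divergence-free, and shear-free by Lemma~\ref{lemma:KVF}. For the twist function, a short bracket computation using $[\partial_{v},\partial_{x}]=[\partial_{v},\partial_{y}]=0$ and the $v$-independence of $k$, $h$ gives
\[
[\bar{\xx},\bar{\yy}]=[k\partial_{v}+\partial_{x},\,h\partial_{v}+\partial_{y}]=(h_{x}-k_{y})\,\partial_{v},
\]
so by \eqref{eqn:twist0} we get $\bar{\ii}=\bar{g}(\bar{\kk},[\bar{\xx},\bar{\yy}])=h_{x}-k_{y}$, and the twist function $|\bar{\ii}|=|h_{x}-k_{y}|$ is nowhere zero exactly under the hypothesis; since $\RR^{3}$ is connected, this quantity has constant sign, and relabelling $\bar{\xx}$, $\bar{\yy}$ if necessary it may be taken negative as in the proof of Theorem~\ref{thm:warped}. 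With $w>0$ smooth and $w'/w>-1$, all hypotheses of Theorem~\ref{thm:warped} are met, so $(\RR\times\RR^{3},-dt^{2}+w^{2}\bar{g},\,\bar{\kk}/w+\partial_{t},\,\nabla t)$ is admissible and $g$ induces a K\"ahler metric on $\RR\times\RR^{3}\cong\RR^{4}$, as claimed.

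I do not anticipate a genuine obstacle: once $\bar{g}$ is written down explicitly, everything follows from the two cited results. The one place to be careful is the bookkeeping of \eqref{eqn:h} — correctly reading $\bar{g}$ off from the prescribed dual coframe, and keeping track that restriction to the slice $S_{u_{0}}$ removes all $v$-dependence from $k$ and $h$. This is precisely what makes $\partial_{v}$ Killing and reduces the twist computation to a one-line bracket.
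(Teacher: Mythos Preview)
Your proof is correct, but the route differs from the paper's. The paper never writes $\bar{g}$ explicitly in coordinates; instead it works throughout via the defining relations \eqref{eqn:h} linking $\bar{g}$ to the ambient pp-wave metric $g$. It first checks directly (via Koszul-type identities and the vanishing of $[\partial_v,\xx]$, $[\partial_v,\yy]$) that $\bar{\kk}$ is geodesic, divergence-free, and shear-free, and only then invokes Lemma~\ref{lemma:KVF} in the \emph{forward} direction to conclude that $\bar{\kk}$ is Killing. You do the opposite: you solve for the dual coframe, obtain $\bar{g}=(dv-k\,dx-h\,dy)^2+dx^2+dy^2$, and read off the Killing property from $v$-independence of the coefficients; Lemma~\ref{lemma:KVF} is then used \emph{backward} to recover geodesic and shear-free. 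Your approach is shorter and more self-contained once the coframe is identified, while the paper's keeps the ambient Lorentzian picture in view, which matches the narrative that $\bar{g}$ is being extracted from pp-wave data. The twist computation and the final appeal to Theorem~\ref{thm:warped} are essentially identical in both arguments.
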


\begin{proof}
We first show that $\bar{\kk}$ is a unit length Killing vector field, via Lemma \ref{lemma:KVF}.  That $\bar{\kk}$ has unit length with respect to $\bar{g}$ is clear, since
$\bar{g}(\bar{\kk},\bar{\kk}) = -g(\ZZ,\bar{\kk}) = 1$.  Letting $\overline{\nabla}$ denote the Levi-Civita connection of $(\RR^3,\bar{g})$, it follows in particular that $\bar{g}(\overline{\nabla}_{\!\bar{\kk}}\bar{\kk},\bar{\kk}) = 0$.  Next, observe that
\[
\bar{g}(\overline{\nabla}_{\!\bar{\kk}}\bar{\kk},\bar{\xx})=-\bar{g}(\bar{\kk},[\bar{\kk},\bar{\xx}])=
g(\ZZ,[\partial_v,\xx])\big|_{S_{u_0}}=0,
\]
where we have used $[\bar{\kk},\bar{\xx}] = [\partial_v,\xx]\big|_{S_{u_0}}$ because $[\partial_v,\xx]$ is tangent to $S$, and also $[\partial_v,\xx] = 0$.  Likewise, $\bar{g}(\overline{\nabla}_{\!\bar{\kk}}\bar{\kk},\bar{\yy}) = 0$, so that $\overline{\nabla}_{\!\bar{\kk}}\bar{\kk} = 0$, hence $\bar{\kk}$ has geodesic flow in $(\RR^3,\bar{g})$.  That $\bar{\kk}$ is both divergence-free and shear-free is similarly determined.
Being unit length, geodesic, divergence-free, and shear-free, it follows by Lemma \ref{lemma:KVF} that $\bar{\kk}$ is a unit length Killing vector field on the Riemannian 3-manifold $(\RR^3,\bar{g})$.  Finally, we show that the twist function of $\bar{\kk}$ is nowhere vanishing, provided that $k_y \neq h_x$ at any point. This follows from our particular choice of $\ZZ$, namely, that its twist function is nowhere vanishing:
\beqa
\label{eqn:htwist}
\bar{\ii} := \bar{g}(\bar{\kk},[\bar\xx,\bar\yy])=-g(\ZZ,[\xx,\yy])\big|_{S_{u_0}}=(h_x - k_y)\big|_{S_{u_0}}\!.
\eeqa
Applying now the contents of Theorem \ref{thm:warped}, the proof is complete.
\end{proof}

Some of the K\"ahler metrics induced by such ``truncated" pp-wave metrics as in this proposition,
as well as some of those induced directly from plane waves (see next section), are shown in
\cite{am2} to be central metrics \cite{c}. More precisely, the determinant of their Ricci endomorphism vanishes. Other such metrics are shown there to be K\"ahler-Einstein.

\subsection{Complete induced K\"ahler metrics}\lb{complt}

We show here that one can in some instances obtain complete K\"ahler
metrics via the construction of Theorem~\ref{thm:warped}.

To proceed we choose a {\em compact} $3$-manifold $N$
with the properties in Theorem~\ref{thm:warped} and consider on
$M=N\times \mathbb{R}$ a K\"ahler metric $\gK$
induced according to that theorem but defined via an arbitrary  function $f$.
Its domain is then computed (via Theorem~\ref{ad-gen}, the sign information
for $\bar{\iota}$ and $w$ along with \Ref{warped-twist},
Remark~\ref{LIK} and \Ref{eqn:kpreg}) to be
\be\lb{domain}
f>0,\qquad (fw)'/w>0.
\end{equation}

Let $\hat{\kk}$ denote the $1$-form that is $1$ on $\kk$ and zero on the other
frame fields, and have $\hat\tT$, $\hat\xx$, $\hat\yy$, where $\xx$, $\yy$ form an oriented
orthonormal frame for $\HH$, denote analogous $1$-forms. Similarly denote by $\hat{\bar\kk}$
the pull-back from $N$ of the one-form dual in the same sense to $\bar{\kk}$.
Now $dt=\hat\kk-\hat\tT$, while $(\hat\kk+\hat\tT)(\bar{\kk}/w)=2$ so that the
K\"ahler metric takes the form
\[
\begin{aligned}
\gK&=c(\hat\kk^2+\hat\tT^2)-f\ii(\hat\xx^2+\hat\yy^2)\\
&=(c/2)((\hat\kk-\hat\tT)^2+(\hat\kk+\hat\tT)^2)-f\ii(\hat\xx^2+\hat\yy^2)\\
&=(c/2)(dt^2+(2w\hat{\bar\kk})^2)-fw^{-1}\bar{\ii}(\hat\xx^2+\hat\yy^2)
\end{aligned}
\]
for $c=\gK(\kk,\kk)=(fw)'/w$.
It follows that $\gK$ has the form $ds^2+g_s$ where $s=\int\sqrt{c/2}\,dt$
and $g_s$ form a family of metrics on $N$. As $N$ is compact, such metrics are complete
on $I\times N$ whenever $I=(\inf s,\sup s)=\mathbb{R}$ (cf. \cite[Lemma 33]{a-z}).
The latter can easily be arranged with an appropriate choice of $f$ and $w$.

\section{A K\"ahler metric induced by a gravitational plane wave}
\label{sec:planewave}

In Proposition \ref{eqn:KVFproof} we used three-dimensional submanifolds of pp-waves
(Definition~\ref{defn:ppwave}) to construct K\"ahler metrics on admissible four-dimensional Lorentzian warped products.  Key to this construction was the null vector field \eqref{eqn:ell} of a pp-wave, which was used to define the Riemannian metric \eqref{eqn:h}.  In this example we will construct a K\"ahler metric which will be induced directly from an admissible $4$-dimensional pp-wave (in fact, a plane wave),
without first constructing a Riemannian metric on a hypersurface.

\begin{prop}
\label{prop:ppKahler}
Let $(\RR^4,g)$ be a pp-wave with $H(u,x,y) = -x^2 - y^2$ and let $\ZZ$ be the null vector field \eqref{eqn:ell} with $k(u,x,y) = -y$ and $h(u,x,y) = x$.  Then $(\RR^4,g,\ZZ,\nabla u)$ is admissible and induces a K\"ahler metric on $\RR^4$.
\end{prop}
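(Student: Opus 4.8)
The plan is to reduce the statement to the equivalences assembled immediately above and then to invoke Theorem~\ref{ad-gen}. Since $k(u,x,y)=-y$ and $h(u,x,y)=x$, we have $k+ih=i(x+iy)=iz$, which is holomorphic, and $k$ is harmonic. The auxiliary equation \eqref{eqn:fPDE} holds trivially: $k_x=0$, $k_y=-1$, $k_{xy}=0$ and $k_{yy}=0$, so each of its three terms $kk_{xy}$, $2k_xk_y$, $hk_{yy}$ vanishes identically. Next I would verify the geodesic system \eqref{eqn:Zgeod} for $H=-x^2-y^2$: here $H_x=-2x$ and $H_y=-2y$, and substituting the ($u$-independent) functions $k=-y$, $h=x$ gives $-k_u-H_x/2+kk_x+hk_y=x+0-x=0$ and $-h_u-H_y/2+kh_x+hh_y=y-y+0=0$. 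By the equivalence recorded before the statement, $(\RR^4,g,\ZZ,\nabla u)$ is therefore admissible and $\ZZ$ is geodesic; in particular $J:=J_{g,\ZZ,\nabla u}$ (with $J\xx=\yy$) is an admissible complex structure.

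It then remains to exhibit a parameter function $f$ for which the region of Theorem~\ref{ad-gen} is all of $\RR^4$. Using the frame $\{\ZZ,\nabla u,\xx,\yy\}$ adapted to the splitting $T\RR^4=\VV\oplus\HH$ (with $\xx$, $\yy$ the $g$-orthonormal fields of \eqref{eqn:xy3}), I would record the relevant invariants: by \eqref{eqn:divell} the shear of $\ZZ$ vanishes and its twist function is $\ii=k_y-h_x=-2$, a nonzero constant; $\ZZ$ and $\nabla u=\partial_v$ are both null with $g(\ZZ,\nabla u)=-1$, so $G=\det A=-1$ and, with $\tau=u$, $\ell=1$; and $d\ZZ^\flat(\ZZ,\nabla u)=g(\n_\ZZ\ZZ,\nabla u)-g(\n_{\nabla u}\ZZ,\ZZ)=0$, both terms vanishing because $\ZZ$ is geodesic and $g(\ZZ,\ZZ)\equiv 0$. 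Hence the two inequalities in \eqref{ineq} become $f\ii=-2f<0$ and $f'G/\ell-f\,d\ZZ^\flat(\ZZ,\nabla u)=-f'<0$, i.e. $f>0$ and $f'>0$ (the second also follows from Remark~\ref{LIK}, since $\ZZ$ is geodesic of constant length). Choosing $f(\tau)=e^{\tau}$ satisfies both on all of $\RR^4$, so $\gK=d(e^{u}\ZZ^\flat)(\cdot,J\cdot)$ is, by Theorem~\ref{ad-gen}, a K\"ahler metric on $\RR^4$.

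The only step with genuine content is the admissibility verification, and this has already been reduced, in the discussion preceding the proposition, to holomorphicity of $k+ih$, harmonicity of $k$, and the scalar equation \eqref{eqn:fPDE}, together with the choice of $H$ forced by \eqref{eqn:Zgeod}; all of these are immediate for the stated data. Were a self-contained argument preferred, the step to watch would be the bracket conditions \eqref{Nij}i) (equivalently \eqref{alter}), but here they are transparent: $\nabla u=\partial_v$ is parallel, hence geodesic, shear-free, twist-free and commuting with $\xx$ and $\yy$, while $\ZZ$ has $u$- and $v$-independent components in the coordinate frame, so that $[\ZZ,\xx]$ and $[\ZZ,\yy]$ are directly computed and seen to lie in $\HH$; the shear condition \eqref{Nij}ii) then holds because both relevant shears vanish. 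Everything else is bookkeeping with the Koszul formula already carried out in the general set-up of this section.
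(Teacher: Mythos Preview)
Your proof is correct and reaches the same K\"ahler metric $\gK=d(e^u\ZZ^\flat)(\cdot,J\cdot)$ via the same inequalities $f\ii<0$, $f'G/\ell<0$ as the paper. The one difference in emphasis is that you invoke the equivalence stated just before the proposition (admissibility $\Leftrightarrow$ $k+ih$ holomorphic and \eqref{eqn:fPDE}), whereas the paper's own proof bypasses that equivalence and instead verifies the admissibility conditions directly: it checks shear-freeness of $\ZZ$ and $\nabla u$, then conditions \eqref{double}a), \eqref{near-grad}, and \eqref{semRm} from Remark~\ref{cond-i} (the last via $\nabla_{\partial_v}\ZZ=\nabla_\ZZ\partial_v=0$), concluding \eqref{Nij}. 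Your closing paragraph already anticipates this direct route, so the two arguments are really the same verification packaged differently; your use of the pre-stated equivalence is slightly more economical, while the paper's version is self-contained.
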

\begin{proof}
$\ZZ=-\partial_u-y\partial_x+x\partial_y$ and $\n u=\partial_v$ are everywhere linearly independent and the orthonormality of $\xx$, $\yy$ of \Ref{eqn:xy3} shows that
$\HH:=\mathrm{span}(\n u, \ZZ)^\perp$ is spacelike, as required in \Ref{space}.
For $\kk_+:=\ZZ$ and the parallel field $\kk_-:=\n u$, \eqref{double}a) holds, as well
as \eqref{near-grad} and condition \eqref{semRm}, the latter since $\cd{\partial_v}{\ZZ} = \cd{\ZZ}{\partial_v} = 0$. Thus by Remark~\ref{cond-i}, \Ref{Nij}i) holds, as can also be
seen directly from the bracket relations of these vector fields with $\xx$, $\yy$, which also
imply using \Ref{eqn:shear2} that $\ZZ$, $\n u$ are shear-free. Thus the associated almost complex structure is integrable. Next, $g(\ZZ,\ZZ)=0$ and $g(\n u, \ZZ)=-1$, so
$(\RR^4,g,\ZZ,\nabla u)$ is admissible.
Now set $f = e^{u}$ and note that $\ii^{\ZZ} = k_y - h_x = -2$.  That $\gK=d(e^u\ZZ^\flat)(\cdot, J\cdot)$ is K\"ahler on $\RR^4$ now follows from Theorem~\ref{ad-gen} and Remark~\ref{LIK}, as
$f\ii = -2e^{u} < 0$ and $f'G/\ell = -e^u < 0$.
\end{proof}
It can be shown that $\ZZ$ is $\gK$-conformal (in fact $\gK$-homothetic) while $\n u$ is $\gK$-Killing. It is shown in \cite{am2} that $\gK$ has vanishing Ricci determinant.
\section{K\"ahler metrics on Petrov type~$D$ spacetimes}
\label{Kerr}
In this section we present two examples of Lorentzian 4-manifolds that fall outside the admissible category, and a third admissible one.
These examples belong to the class of Lorentzian 4-manifolds of \emph{Petrov Type~$D$,} and include the \emph{Kerr spacetime.}
The relationship between such spacetimes and K\"ahler geometry
was considered by Flaherty \cite{flaherty}, who was lead to construct
mostly modified-K\"ahler metrics living on the complexified tangent bundle,
rather than genuine K\"ahler metrics.

\vspace{.08in}

\subsection{K\"ahler metrics for Lorentzian metrics of Petrov type~$D$}
\label{Dtype}

By the Goldberg-Sachs Theorem \cite{GS}
(see also \cite[Chapter 5]{o1995}), a Lorentzian metric of
Petrov type~$D$ admits two null geodesic vector fields which
are both shear-free. Even when these satisfy the conditions of
Theorem \ref{integ}, neither one may be gradient, or even
near-gradient in the sense of Definition~\ref{adms}(ii).
Thus they do not give rise to an admissible manifold as in Definition \ref{adms}.
However, if one takes $\kk_+$ to be one of these geodesic fields,
and $\kk_-$ a pre-geodesic field which is a function multiple of the second,
it is still possible to form very similar K\"ahler
metrics, as the following proposition shows:
\begin{prop}\lb{for-Kerr}
Let $g$ be a Petrov type~$D$ metric on an oriented $4$-manifold,
with null shear-free vector fields $\kk_\pm$, with $\kk_+$
geodesic, $\kk_-$ pre-geodesic and $g(\kk_+,\kk_-)<0$.
Assume also that $J:=\JJ$ is an admissible complex structure.
Set $p:=1/\sqrt{-g(\kk_+,\kk_-)}$.
Suppose $u$ is a smooth function on $M$ and $f$
a smooth positive function defined on
the range of $u$, for which $\n(f(u)/p)\in\Gamma(\VV)$ for
$\VV=\mathrm{span}(\kk_+,\kk_-)$.
Then
\[
\gK=d(f(u)p\kk_+^\flat)(\cdot,J\cdot)
\]
is K\"ahler in any region where $\,\ii^{\kk_+}<0$ and $d_{\kk_+}(f(u)p)<0$.
\end{prop}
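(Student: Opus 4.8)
The plan is to follow closely the proof of Theorem~\ref{ad-gen}, with the parameter function $f(\ta)$ there replaced by the weight $w:=f(u)p$, and with the symplectic form taken to be $\om:=d(w\,\kk_+^\flat)=dw\we\kk_+^\flat+w\,d\kk_+^\flat$, so that $\gK=\om(\cdot,J\cdot)$. Since $\om$ is exact, hence closed, and $J$ is integrable by hypothesis, I only need to verify two things: first, that $\om$ (equivalently $\gK$) is $J$-invariant; and second, that in the asserted region $J$ is $\om$-tame, i.e.\ $\gK(a,a)=\om(a,Ja)>0$ for $a$ ranging over a $\gK$-orthogonal frame.

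For $J$-invariance, because $J$ preserves the splitting $TM=\VV\oplus\HH$ it is enough to show $\om(\kk_\pm,\xx)=0$ for all $\xx\in\Gamma(\HH)$. Since $\kk_+^\flat$ annihilates both $\kk_+$ (null) and $\HH$, I get $\om(\kk_+,\xx)=w\,d\kk_+^\flat(\kk_+,\xx)=-w\,g(\kk_+,[\kk_+,\xx])$, which vanishes by \Ref{alter} (available since $J$ is an admissible complex structure). The term $\om(\kk_-,\xx)$ is the one needing care: expanding, $(dw\we\kk_+^\flat)(\kk_-,\xx)=-(d_\xx w)\,g(\kk_+,\kk_-)$ while $w\,d\kk_+^\flat(\kk_-,\xx)=-w\,d_\xx(g(\kk_+,\kk_-))-w\,g(\kk_+,[\kk_-,\xx])$, the last term again vanishing by \Ref{alter}. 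Writing $g(\kk_+,\kk_-)=-p^{-2}$ and using the hypothesis $\n(f(u)/p)\in\Gamma(\VV)$ --- i.e.\ $d_\xx(f(u)/p)=0$, hence $d_\xx w=2f(u)\,d_\xx p$ and $d_\xx(g(\kk_+,\kk_-))=2p^{-3}\,d_\xx p$ --- the two remaining contributions cancel, leaving $\om(\kk_-,\xx)=0$. (This is the structural counterpart of condition iii] of Definition~\ref{adms} in Theorem~\ref{ad-gen}; here $\kk_-$ is merely pre-geodesic, not near-gradient, so that argument does not apply, and it is exactly the normalization by $p=(-g(\kk_+,\kk_-))^{-1/2}$ together with the hypothesis on $f(u)/p$ that forces the cancellation.) In particular $\VV$ and $\HH$ are $\gK$-orthogonal.

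For positivity, I would take an oriented $g$-orthonormal frame $\xx$, $\yy=J\xx$ of $\HH$; then $\{\kk_+,\kk_-,\xx,\yy\}$ is $\gK$-orthogonal (using $J\kk_+=\kk_-$ and the previous step), so it suffices to check $\gK(\xx,\xx)=\om(\xx,\yy)>0$ and $\gK(\kk_+,\kk_+)=\om(\kk_+,\kk_-)>0$ (the values on $\yy$ and $\kk_-$ agreeing with these). By \Ref{eqn:twist0}, $\om(\xx,\yy)=w\,d\kk_+^\flat(\xx,\yy)=-w\,\ii^{\kk_+}=-f(u)p\,\ii^{\kk_+}$, which is positive precisely when $\ii^{\kk_+}<0$. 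Since $\kk_+$ is geodesic and null, $d\kk_+^\flat(\kk_+,\cdot)=0$ (exactly as in Remark~\ref{LIK}), so $\om(\kk_+,\kk_-)=(dw\we\kk_+^\flat)(\kk_+,\kk_-)=(d_{\kk_+}w)\,g(\kk_+,\kk_-)=-d_{\kk_+}(f(u)p)/p^2$, which is positive precisely when $d_{\kk_+}(f(u)p)<0$. Hence on the region where both inequalities hold, $\gK$ is positive definite and $J$-invariant with $\om$ closed and $J$ integrable, so $\gK$ is K\"ahler there.

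The hard part --- essentially the only step that is not a mechanical adaptation of the proof of Theorem~\ref{ad-gen} --- is the vanishing of $\om(\kk_-,\xx)$: since $\kk_-$ is not near-gradient, Definition~\ref{adms}(iii) is unavailable, and one must see that multiplying $\kk_+^\flat$ by $p$ precisely compensates the variation of $g(\kk_+,\kk_-)$ transverse to $\VV$, leaving $\n(f(u)/p)\in\Gamma(\VV)$ to absorb the remainder. Once that identity is in hand, the rest follows the established pattern.
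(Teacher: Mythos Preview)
Your proof is correct and follows essentially the same strategy as the paper's: verify $J$-invariance of $\om$ by showing it kills mixed $\VV$--$\HH$ pairs, then check tameness on the frame. The only technical difference is that the paper first rescales to $\kt_\pm:=p\kk_\pm$, so that $g(\kt_+,\kt_-)=-1$ is constant and the $\om(\kt_-,\xx)$ computation reduces to $f'd_\xx u-f(d_\xx p)/p$, whose vanishing is read off as $d_\xx(\log(f(u)/p))=0$; you instead work directly with $\kk_\pm$ and exhibit the same cancellation via $d_\xx w=2f(u)\,d_\xx p$ and $d_\xx(g(\kk_+,\kk_-))=2p^{-3}d_\xx p$. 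Both routes are equivalent and use the hypothesis $\n(f(u)/p)\in\Gamma(\VV)$ in exactly the same way.
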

\begin{proof}
Setting $\kt_\pm:=p\kk_\pm$, note that $g(\kt_+,\kt_-)=-1$ and $\n_{\!\kt_+}\!\kt_+=(d_{\kk_+}p)\kt_+$, so that $\kt_+$ is null pre-geodesic.
Observe also that $J_{g,\kt_\pm} = \JJ$. Now consider $\om:=d(f(u)\tilde{\kk}_+^\flat)$.  For the usual orthonormal frame $\xx,\yy$ on $\HH$, it follows from a calculation as in Theorem~\ref{ad-gen} that
$\om(\kt_+,\xx) = \om(\kt_+,\yy) = 0$, and similarly $\om(\xx,\yy)=-fp\ii^{\kk_+}$, and
$$
\om(\kt_+,\kt_-)=-\big(f'du(\kt_+)+fd_{\kk_+}p\big).
$$
Thus $\om$ will be symplectic if $f(f'du(\kt_+)+fd_{\kk_+}p)\ii^{\kk_+}\ne 0$.

For $J$-invariance of $\om$, again the crucial test is when one vector field lies in $\VV$
and the other in $\HH$. Since we know $\om$ vanishes on the null pre-geodesic vector field $\kt_+$ and
a vector field in $\HH$, we need to show the same on $\kt_-$ and such a vector field.
As in Theorem \ref{ad-gen}, we calculate, noting that $g([\kk_-,\xx],\kk_+)=0$ as usual,
and relying on the constancy of $g(\kt_+,\kt_-)$:
\[
\begin{aligned}
\om(\kt_-,\xx)&=-f'du(\xx)g(\kt_+,\kt_-)+f(-g([\kt_-,\xx],\kt_+))\\
&=f'du(\xx)+fp(d_\xx p)g(\kk_-,\kk_+)=f'd_\xx u-f(d_\xx p)/p.
\end{aligned}
\]
Vanishing of this follows if $d_\xx (\log f(u))=d_\xx(\log p)$.
Vanishing for any $\xx$ with values in $\HH$ will thus hold if
$\log(f(u)/p)$, or equivalently $f(u)/p$, has a vertical gradient.

Tameness of $J$ in the region specified by the inequalities in the
theorem follows from the above calculations of $\om(\xx,\yy)$ (note
that $f$, $p$ are positive) and
$\om(\kt_+,\kt_-)$, where for the latter we note that
$-(f'du(\kt_+)+fd_{\kk_+}p)=-(f'p\,du(\kk_+)+fd_{\kk_+}p)=-d_{\kk_+}(f(u)p)$
must be positive.
\end{proof}
Note that $\kt_\pm$ satisfy the conditions of Theorem~\ref{thm:KerrNUT},
rather than Theorem~\ref{integ}.
\begin{remark}\lb{doubKJ}
Remark~\ref{doubleJ} applies here,
while Remark~\ref{doubleKJ} follows analogously, so
that $\gK$ of Proposition~\ref{for-Kerr} is ambihermitian.
\end{remark}

\subsection{A K\"ahler metric on Kerr spacetime}
\label{sec:Kerr}
The {\em Kerr spacetime} can be partitioned in the form $M=M'\cup \Sigma\cup M''$,
where $M'$, $M''$ are open submanifolds, and $\Sigma$ is a totally geodesic hypersurface
called the equatorial plane, which contains a cylinder called the ring singularity,
where the Kerr metric $g$ is singular. We construct a K\"ahler metric $\gK$ on a subset
of $M'$, in the case where the Kerr spacetime is rapidly rotating, a status determined by an
inequality between the two constant parameters in $g$. A similar K\"ahler metric can be
constructed on a subset of $M''$ if the complex structure we choose is changed by a minus sign
on a distribution denoted $\HH$ and described below.

\vspace{.1in}
The Kerr metric $g$ is initially defined in an open subset of $M:=\RR^2 \times \mathbb{S}^2$.
In coordinates $\{t,r,\vartheta,\varphi\}$, with $0 < \vartheta < \pi$ and
$0 \leq \varphi \leq 2\pi$, $g$ has components
\beqa
\label{eqn:Kerr}
g_{tt}=-1 + \frac{2mr}{\rho^2}\hspace{.2in},\hspace{.2in}g_{rr} &=&
\frac{\rho^2}{\Delta}\hspace{.2in},\hspace{.2in}g_{\vartheta\vartheta}=\rho^2,\\
g_{\varphi\varphi}=\left[r^2 +a^2
+\frac{2m r a^2 \sin^2\vartheta}{\rho^2}\right]\sin^2\vartheta&,&g_{\varphi t}
=g_{t \varphi}=-\frac{2mra\sin^2\vartheta}{\rho^2},\nonumber
\eeqa
all other components being zero, with $a$, $m$ positive parameters and
\beqa
\label{eqn:Kerr2}
\rho^2:=r^2 + a^2\cos^2\vartheta\hspace{.2in},\hspace{.2in}\Delta:=r^2 -2mr + a^2.
\eeqa
Kerr spacetime is designated as rapidly rotating
if $a > m$,
and in that case $\Delta$ has no real roots.
Our choice of the rapidly rotating version is made
for convenience only, to simplify the singular region of the metric.

As a (Ricci-flat) Petrov Type~$D$ metric,
$g$'s two geodesic and shear-free null vector fields $\kk_{\pm}$ are everywhere linearly
independent and given by
\beqa
\label{eqn:pnull}
\kk_\pm:=\pm\partial_r + \frac{r^2 +a^2}{\Delta}\,\partial_t + \frac{a}{\Delta}\,\partial_{\varphi}
\eeqa
(see \cite[p. 79ff.]{o1995}).

We consider as usual the almost complex structure $J:=\JJ$. The orientation is fixed by choosing
on the spacelike distribution $\HH=\mathrm{span}(\kk_+,\kk_-)^{\perp}$, the orthonormal pair
\beqa
\label{eqn:sframe}
E_2:=\frac{1}{\rho} \partial_\vartheta\hspace{.2in},\hspace{.2in}E_3
:=\frac{1}{\rho\sin\vartheta}\,\partial_\varphi + \frac{a\sin\vartheta}{\rho}\,\partial_t
\eeqa
and taking
$JE_2 := E_3$, $JE_3 := -E_2$.
(The notations $E_2$, $E_3$ conform with \cite{o1995}).)

To construct K\"ahler metrics, we will apply Proposition \ref{for-Kerr}.
First, we verify integrability of $J$, by checking the conditions of Theorem~\ref{integ}.
Now $\kk_\pm$ are  shear-free, and conditions (i) of Theorem \ref{integ} hold because
\beqa
\label{eqn:Kbrack}
[\kk_\pm,E_2]=\mp\frac{r}{\rho^2}E_2\hspace{.2in},\hspace{.2in}[\kk_\pm,E_3]=\mp\frac{r}{\rho^2}E_3,
\eeqa
as can be easily verified using \cite[p. 95-6]{o1995}.\footnote{Note that on p. 95 of \cite{o1995}, there are two occurrences of ``$r/\sqrt{\vep\Delta}(E_0 \pm E_1)$" which should in fact be ``$\rho/\sqrt{\vep\Delta}(E_0 \pm E_1)$," with $\rho$ given by \eqref{eqn:Kerr2}.  Also, on p. 96 of \cite{o1995} the Lie bracket $[E_2,E_3]$ should equal ``$-r\sqrt{\vep\Delta}\,\rho^{-3}E_3$," not ``$-r\sqrt{\vep\Delta}\,\rho E_3$."}

Next, one easily calculates that $g(\kk_+,\kk_-)=-2\rho^2/\Delta<0$, so that
\[
p:=\sqrt{\frac{\Delta}2}\frac 1{\rho}\cdot
\]
Note that $p$ is a function of only $r$ and $\vartheta$. Let $u=e^{h(r)}p$, with
$h$ to be determined later, and $f(u)=u$, which is positive on the (positive) range of $u$.
The conditions of Proposition \ref{for-Kerr}
we want satisfied can be written
in the form
\be\lb{conds}
\begin{aligned}
&[\n\log(f(u))]^\HH=[\n\log p]^\HH,\\
&d_{\kk_+}[\log(f(u)p)]<0.
\end{aligned}
\end{equation}
Now the inverse matrix to that of $g$ has the nonzero components at the same
entries as those of $g$, and $g^{rr}=1/\rho^2>0$. Thus $\n h(r)=g^{rr}h_{r}\partial_r$,
which lies in $V$.
Thus the first condition in \Ref{conds} is automatically satisfied,
since $\log f(u)=\log p +h(r)$. The second condition is just
\[
\begin{aligned}
d_{\kk_+}(h)=h_r&<-2d_{\kk_+}\log p=-(\log (p^2))_r\\
&=(\log\rho^2)_r-(\log\Delta)_r
=(2r/\rho^2-2(r-m)/\Delta).
\end{aligned}
\]
As $1/\rho^2>1/(r^2+a^2+1)$, this will be satisfied at any point of $M - \Sigma$
for which $r>0$ if
$h(r)=h_{a,m}(r)$ is given by
\[
h(r)=
\log\left(\fr{r^2+a^2+1}{\Delta}\right).
\]

The function $\ii$, which is, up to sign, the twist function of $\kk_+$,
is given (cf. \cite{o1995}) in the ordered basis $E_2$, $E_3$, by
\beqa
\label{eqn:pcv2}
\ii=\frac{2a\cos\vartheta}{\rho^2}\cdot
\eeqa
Thus $\ii$ vanishes only on the totally geodesic hypersurface $\Sigma$ given by
$\vartheta = \pi/2$, known as the ``equatorial plane" (a plane is gotten by also fixing a value of $t$).
Hence $\ii$ will be negative ``below" it, so that
\be\lb{Mpr}
\gK=d(e^{h(r)}p^2\kk_+^\flat))(\cdot,\JJ\cdot)=
d\left(\fr{r^2+a^2+1}{2\rho^2}\kk_+^\flat\right)(\cdot,\JJ\cdot)
\end{equation}
is a K\"ahler metric on the set
\[
M'\cap \{r>0\},\ \text{where }
M':=\{(t,r,\vartheta,\varphi) \in \RR^2 \times \mathbb{S}^2\,|\, \pi/2 < \vartheta < \pi\}.
\]
Note that Dixon describes in \cite{dix} a K\"ahler metric which is obtained by a type of Wick rotation of the Kerr metric, also not defined on the whole spacetime, and whose K\"ahler form was given earlier in \cite{al-sc}. He also showed this metric is in fact ambitoric, and studied
its domain and asymptotic behaviour near the singular sets, including the non-rapidly rotating
case.
\subsection{A K\"ahler metric on NUT spacetime}
\label{sec:NPKerr}

One of 3 NUT spacetimes (Newman-Unti-Tamburino) is
given in local coordinates $\{u,r,x,y\}$ by
\beqa
\label{eqn:KerrNUT}
g_{uu}=-|\rho|^2(r^2-2mr-l^2)\hspace{.2in},\hspace{.2in}g_{ur} &=& -1\hspace{.2in},\hspace{.2in}g_{ry}=2l\cos x,\nonumber\\
g_{uy}=2|\rho|^2l\cos x (r^2-2mr-l^2) &,&g_{xx}=r^2 + l^2,\\
\hspace{.05in}g_{yy}=-|\rho|^2 (r^2-2mr -l^2)(4l^2\cos^2\!x) \!\!\!&+&\!\!\! (r^2+l^2)\sin^2\!x,\nonumber
\eeqa
all other components being zero, with $\rho := -1/(r+il)$
and $l$ a positive constant (see \cite{kinn, Kinne}).

NUT spacetime is Ricci-flat and topologically an open subset of $\RR^2 \times \mathbb{S}^2$.
Its two shear-free null vector fields are
\[
\kk_+:=\partial_r \comma \kk_-:=\partial_u - \frac{1}{2}|\rho|^2(r^2 - 2mr - l^2)\,\partial_r.
\]
We report without proof that the conditions of Proposition \ref{for-Kerr} hold (with $p=1$)
so that $\gK = d(e^{-r}\kk_+^\flat)(\cdot,J\cdot)$ will be a K\"ahler metric on the entire domain of NUT spacetime lying in $S^2\times\RR^2$ (in contrast to the Kerr case).
\subsection{A conformally Kerr metric and its induced K\"ahler metric}
\label{sec:Kerr2}
Our final Petrov type~$D$ example is admissible. It is conformal to Kerr spacetime,
namely
\beqa
\label{eqn:Kerrconf}
\tilde{g}:=\frac{\Delta}{\rho^2}\,g,
\eeqa
where $g$ denotes the Kerr metric \eqref{eqn:Kerr} and the notation is as in \eqref{eqn:Kerr2}.
The associated almost complex structure $\tilde{J}:=J_{\tilde{g},\kk,\widetilde{\nabla}r}$
will be defined with respect to  $\{\kk_+,\widetilde{\n} r\}$ instead of $\{\kk_+,\kk_-\}$.
With this conformal factor, the gradient $\widetilde{\nabla} r = \partial_r$.
Definition \ref{adms}'s conditions may be verified to yield admissibility.
An assoicated metric of the form $\gK = d(f(r)\kk_+^\flat)(\cdot,\tilde{J}\cdot)$  is K\"ahler
on the region $M'\setminus \{r=0\}$, where $M'$ is given in \Ref{Mpr}, if
$f(r) = e^{-h(r)}$, for $h(r):=\int_{r_0}^r q(x)\,dx$ with $r_0$ a constant and
$q(r)= 2(r-m)/(r^2-2mr+a^2)-2/r,$ as can be verified via
Theorem \ref{ad-gen}.


\section{A Lie group example with vector fields which are not shear-free}\lb{sec:shrfl}

Consider the $4$-dimensional solvable real Lie algebra $\frak{g}$ with
ordered basis $\kk$, $\tT$, $\xx$, $\yy$ defined by the
following Lie bracket relations, where we list only
the non-zero ones (up to permuting entries):
\be\lb{lial}
[\kk,\xx]=\yy,\qquad [\tT,\yy]=\yy,\qquad [\tT,\kk]=\kk,\qquad [\xx,\yy]=\yy+r\kk,
\end{equation}
with $r$ a nonzero real constant. Four dimensional solvable Lie algebras have been classified
using symbolic software, cf. \cite{dgr} (note that we are choosing a slightly different
form for the bracket relations, isomorphic to one of the canonical normal forms given there).

By Lie's third fundamental theorem, there exists a Lie group $\wht{G}$
whose Lie algebra is $\frak{g}$, and one can take it to be simply connected, as we do.
The left invariant vector fields of $\wht{G}$ give a realization of this Lie algebra,
and will be denoted
by the same letters.

Define a Lorentzian metric $g$ on $\wht{G}$ by choosing an inner product on $\frak{g}$ making
the above four vectors orthonormal, with $g(\kk,\kk)=-g(\tT,\tT)=1$,
and then extending it to the tangent bundle of $\wht{G}$ as a left-invariant metric.
Then one easily checks that
\[
g([\kk,\HH],\kk)= g([\tT,\HH],\tT)= g([\kk,\HH],\tT)= g([\tT,\HH],\kk)=0,
\]
where $\HH=\mathrm{span}(\xx,\yy)$. Similarly,
using \Ref{eqn:shear2}, one sees that
\[
\sig_1^\kk=-\sig_2^\tT=0,\qquad \sig_2^\kk=\sig_1^\tT=-1\ne 0,
\]
so that the shear operators of $\kk$ and $\tT$ are nonzero and $J\n^o\kk=\n^o\tT$,
where $J=\J$ is the corresponding admissible almost complex structure (with orientation compatible with the choice $\yy:=J\xx$). As the conditions of Theorem \ref{integ} hold, $J$ is integrable.
Next, the twist function of $\kk$ is
$
|\ii|=|g(\kk,[\xx,\yy])|=|r|\ne 0,
$
so that the sign of $\ii$ is fixed by the choice of $r$.
As $\tT$ is a left invariant vector field, $\n_\tT\tT=-\mathrm{ad}_\tT^*(\tT)$,
(see \cite[Proposition 3.18]{cheb}), where $\mathrm{ad}_\tT^*$ denotes the metric adjoint of
the differential at $\tT$ of the adjoint representation. As this differential is given by
the Lie bracket with $\tT$, it follows from \Ref{lial} that $g(\tT,[\tT,\cdot])$ vanishes
on the left invariant orthonormal frame $\kk$, $\xx$, $\yy$, $\tT$
of $\wht{G}$, so that $\n_\tT\tT=0$.
By \Ref{lial}, the orthogonal complement of $\tT$ is integrable.
As $\tT$  has constant length and is geodesic, one can easily see
that $\tT^\flat$ is closed, hence $\tT$ is locally gradient, and as
$\wht{G}$ is simply connected, in fact globally a gradient, i.e. $\tT=\n\ta$ for a function
$\ta$ on $\wht{G}$.
Given that, and the fact that $g(\kk,\kk)$ and $g(\kk,\tT)$ are constant, Theorem \ref{ad-gen}
yields induced K\"ahler metrics of the form \Ref{kler}, with K\"ahler form \Ref{symp-expl}, at
least one of which can be defined on all of $\wht{G}$. In fact,
the domain is given by $f\ii < 0$ and
$f' G/\ell - f(g(\n_\kk\kk,\tT) - d_{\tT} (g(\kk,\kk))/2) =-f'-f< 0$,
so choosing $f(\ta)=e^\ta$ and
$r<0$ suffices.

We note that while $g$ is admissible, $\kk$ is not a geometrically distinguished field in this case.


\bibliographystyle{siam}
\bibliography{AM-Kahler2-math-z}
\end{document}